\newtheoremstyle{monstyledem} % name
    {8pt}                    % Space above
    {8pt}                    % Space below
    {\normalfont}                   % Body font
    {}                           % Indent amount
    {\bf}                   % Theorem head font
    {\newline}                          % Punctuation after theorem head
    {.5em}                       % Space after theorem head
    {}  % Theorem head spec (can be left empty, meaning ‘normal’)
\newtheoremstyle{monstyle} % name
    {8pt}                    % Space above
    {8pt}                    % Space below
    {\itshape}                   % Body font
    {}                           % Indent amount
    {\bf}                   % Theorem head font
    {\newline}                          % Punctuation after theorem head
    {.5em}                       % Space after theorem head
    {}  % Theorem head spec (can be left empty, meaning ‘normal’)
\theoremstyle{monstyle}
\newtheorem{thm}{Theorem}[section]
\newtheorem{de}{Definition}[section]
\newtheorem{prop}{Proposition}[section]
\newtheorem{lem}{Lemma}[section]
\newtheorem{cor}{Corollary}[section]
\newtheorem{rem}{Remark}[section]
\theoremstyle{monstyledem}
\newcommand\blankfootnote[1]{%
  \let\thefootnote\relax\footnotetext{#1}%
  \let\thefootnote\svthefootnote%
}
\title{Regular extensions and algebraic relations between values of Mahler functions in positive characteristic}
\author{Gwladys Fernandes }
\begin{document}
\maketitle

\blankfootnote{\begin{footnotesize}
This project has received funding from the European Research Council (ERC) under the European Union's Horizon 2020 research and innovation programme
under the Grant Agreement No 648132.
\end{footnotesize}}

\noindent
\textbf{{\footnotesize Abstract}} : {\footnotesize Let $\mathbb{K}$ be a function field of characteristic $p>0$. We recently established the analogue of a theorem of Ku. Nishioka for linear Mahler systems defined over $\mathbb{K}(z)$. This paper is dedicated to proving the following refinement of this theorem. Let $f_{1}(z),\ldots f_{n}(z)$ be $d$-Mahler functions such that $\overline{\mathbb{K}}(z)\left(f_{1}(z),\ldots, f_{n}(z)\right)$ is a regular extension over $\overline{\mathbb{K}}(z)$. Then, every homogeneous algebraic relation over $\overline{\mathbb{K}}$ between their values at a regular algebraic point arises as the specialization of a homogeneous algebraic relation over $\overline{\mathbb{K}}(z)$ between these functions themselves. If $\mathbb{K}$ is replaced by a number field, this result is due to B. Adamczewski and C. Faverjon, as a consequence of a theorem of P. Philippon. The main difference is that in characteristic zero, every $d$-Mahler extension is regular, whereas, in characteristic $p$, non-regular $d$-Mahler extensions do exist. Furthermore, we prove that the regularity of the field extension $\overline{\mathbb{K}}(z)\left(f_{1}(z),\ldots, f_{n}(z)\right)$ is also necessary for our refinement to hold. Besides, we show that, when $p\nmid d$, $d$-Mahler extensions over $\overline{\mathbb{K}}(z)$ are always regular. Finally, we describe some consequences of our main result concerning the transcendence of values of $d$-Mahler functions at algebraic points.}

\section{Introduction}

Let $\mathbb{K}$ be a field and let $d\geq 2$ be an integer. We say that a power series $f(z)\in \mathbb{K}[[z]]$ is a $d$-Mahler function over $\mathbb{K}(z)$ if there exist polynomials $P_{0}(z),\ldots, P_{n}(z)\in\mathbb{K}[z]$, $P_{n}(z)$ $\cancel\equiv$  $0$, such that
\begin{equation}
\label{equa_min}
P_{0}(z)f(z)+P_{1}(z)f(z^{d})+\cdots+P_{n}(z)f(z^{d^{n}})=0.
\end{equation}

The minimal integer $n$ satisfying the previous equation is called the order of $f(z)$.
We say that the column vector whose coordinates are the power series $f_{1}(z),\ldots, f_{n}(z)\in\mathbb{K}[[z]]$ satisfies a $d$-Mahler system if there exists a matrix $A(z) \in \text{GL}_{n}(\mathbb{K}(z))$ such that

\begin{equation}
\label{syst_gen}
\begin{pmatrix}
f_{1}(z^{d}) \\ \vdots \\ f_{n}(z^{d}) 
\end{pmatrix} =A(z)\begin{pmatrix}
f_{1}(z) \\ \vdots \\ f_{n}(z) 
\end{pmatrix}.
\end{equation}

Any $d$-Mahler function is a coordinate of a vector solution of the $d$-Mahler system associated with the companion matrix of \eqref{equa_min}. Reciprocally, every coordinate of a vector solution of a $d$-Mahler system is a $d$-Mahler function. We say that a number $\alpha\in \mathbb{K}$ is regular with respect to System \eqref{syst_gen} if for all integer $k\geq 0$, the number $\alpha^{d^{k}}$ is neither a pole of the matrix $A(z)$ nor a pole of the matrix $A^{-1}(z)$. In this paper, we are dealing with the case where $\mathbb{K}$ is a function field of positive characteristic. Let us introduce the associated framework. We start with a prime number $p$ and a power of $p$ denoted by $q=p^{r}$. Then, we let $A=\mathbb{F}_{q}[T]$ denote the ring of polynomials in $T$, with coefficients in the finite field $\mathbb{F}_{q}$, and we let $K=\mathbb{F}_{q}(T)$ denote the fraction field of $A$. We define the $\frac{1}{T}$-adic absolute value on $K$ by $\left|\frac{P(T)}{Q(T)}\right|=q^{\text{deg}_{T}(P)-\text{deg}_{T}(Q)}$. We recall that the completion of $K$ with respect to $|.|$ is the field $\mathbb{F}_{q}\left(\left(\frac{1}{T}\right)\right)$ of Laurent power series expansions over $\mathbb{F}_{q}$, and that the completion $C$ of the algebraic closure of $\mathbb{F}_{q}\left(\left(\frac{1}{T}\right)\right)$ with respect to the unique extension of $|.|$ is a complete and algebraically closed field. Finally, as announced, we let $\mathbb{K}$ denote a function field, that is, a finite extension of $K$. We let $\overline{\mathbb{K}}$ denote the algebraic closure of $\mathbb{K}$, embedded in $C$.

Let $\mathbb{K}\{z\}$ denote the set of functions which admit a convergent power series expansion in a domain containing the origin, with coefficients in $\mathbb{K}$. Let $\bm{k}$ be a field and $\mathcal{F}$ a family of elements of a $\bm{k}$-algebra. We let $\text{trdeg}_{\bm{k}}\{\mathcal{F}\}$ denote the transcendence degree of $\mathcal{F}$ over $\bm{k}$. That is, the maximal number of elements of $\mathcal{F}$ that are algebraically independent over $\bm{k}$. In \cite{Fernandes}, the author proves the following result. This is the analogue for function fields of characteristic $p$ of a classical result due to Ku. Nishioka \cite{N-art} when $\mathbb{K}$ is a number field.

\begin{thm}[F.]
\label{nish_gen}
Let $n\geq 1$, $d\geq 2$ be two integers and $f_{1}(z), \ldots, f_{n}(z) \in \mathbb{K}\{z\}$ be functions satisfying $d$-Mahler System \eqref{syst_gen}. Let $\alpha\in\overline{\mathbb{K}}$, $0<|\alpha|<1$, be a regular number with respect to System \eqref{syst_gen}.
Then
\begin{equation}
\label{egdegtr_gen}
\text{trdeg}_{\overline{\mathbb{K}}}\{f_{1}(\alpha), \ldots, f_{n}(\alpha)\}=\text{trdeg}_{\overline{\mathbb{K}}(z)}\{f_{1}(z), \ldots, f_{n}(z)\}.
\end{equation}
 \end{thm}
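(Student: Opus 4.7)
The plan is to follow the Mahler--Nishioka method, adapted to positive characteristic. The inequality
\[
\text{trdeg}_{\overline{\mathbb{K}}}\{f_1(\alpha),\ldots,f_n(\alpha)\} \leq \text{trdeg}_{\overline{\mathbb{K}}(z)}\{f_1(z),\ldots,f_n(z)\}
\]
is straightforward: any algebraic relation satisfied by the functions specializes at $z=\alpha$ to one satisfied by the values (the $f_i$ converge at $\alpha$ because $|\alpha|<1$ and $\alpha$ is regular). The content lies in the reverse inequality, which I would attack by contradiction. Let $t$ denote the right-hand transcendence degree; after renumbering, assume $f_1(z),\ldots,f_t(z)$ are algebraically independent over $\overline{\mathbb{K}}(z)$ while their values at $\alpha$ satisfy a nontrivial homogeneous relation over $\overline{\mathbb{K}}$.

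The proof would then proceed in four stages. First, apply Siegel's lemma over the function field $\mathbb{K}$ (available in positive characteristic) to construct an auxiliary polynomial $Q(z,X_1,\ldots,X_t) \in \overline{\mathbb{K}}[z,X_1,\ldots,X_t]$ of controlled bidegree $(N,D)$ such that $F(z) := Q(z, f_1(z), \ldots, f_t(z))$ vanishes at $z=0$ to order at least $c_1 N D^t$ for an explicit constant $c_1$ depending only on the system. Second, iterate the Mahler system: using $A_k(z) := A(z^{d^{k-1}}) A(z^{d^{k-2}}) \cdots A(z)$, express each $f_i(z^{d^k})$ as a $\overline{\mathbb{K}}(z)$-linear combination of $f_1(z),\ldots,f_n(z)$, and thereby rewrite $F(\alpha^{d^k})$ as a polynomial in the values $f_i(\alpha)$ with coefficients in $\overline{\mathbb{K}}$ and a denominator determined by $\det A_k(\alpha)$; the regularity hypothesis guarantees this denominator does not vanish.

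Third, apply a zero estimate for Mahler systems---the function-field analogue of the Nishioka--Kubota or Philippon multiplicity estimate---to bound from above the order of vanishing at the origin of the family of auxiliary functions, forcing $F(\alpha^{d^k}) \neq 0$ for suitably chosen $k$. Fourth, combine a Liouville-type lower bound on $|F(\alpha^{d^k})|$ (coming from the product formula on $\mathbb{K}$ applied to the nonzero element $F(\alpha^{d^k}) \in \overline{\mathbb{K}}$) with an upper bound obtained from the high-order vanishing of $F$ at $z=0$ together with the very rapid convergence $|\alpha^{d^k}| = |\alpha|^{d^k} \to 0$. Balancing the parameters $N$, $D$, $k$ yields the required contradiction and hence the reverse inequality.

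The main obstacle is the zero estimate in characteristic $p$: the usual characteristic-zero arguments rely on differentiating Mahler equations a large number of times, but in characteristic $p$ iterated ordinary derivatives interact badly with Frobenius and lose information. I would expect that one has to either adapt Philippon's multiplicity theorem relying only on the algebraic-geometric content of the iterated difference system (bypassing derivations), or replace ordinary derivatives by Hasse--Schmidt derivatives which behave well in positive characteristic. A secondary, more technical difficulty is controlling the non-archimedean size of $A_k(\alpha)$ and $A_k(\alpha)^{-1}$ uniformly in $k$; the hypothesis that $\alpha^{d^k}$ is never a pole of $A$ or $A^{-1}$ is exactly what prevents blow-up in those denominators and makes the Liouville estimate effective.
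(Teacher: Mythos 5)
First, a point of reference: the paper does not prove Theorem \ref{nish_gen} at all. It is imported verbatim from the author's earlier work \cite{Fernandes} (``In \cite{Fernandes}, the author proves the following result''), so there is no in-paper argument to compare yours against; I can only assess your sketch on its own terms and against the known proof in \cite{Fernandes}, which follows Ku.\ Nishioka's strategy via an algebraic independence criterion of Philippon adapted to the complete algebraically closed field $C$ of characteristic $p$.

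Your outline reproduces the classical four-step Mahler scheme, and steps one and two (Siegel's lemma over the function field, iteration of the system with regularity controlling the denominators $\det A_k(\alpha)$) are sound. The genuine gap is in step four. The product-formula/Liouville lower bound applies to a \emph{nonzero element of} $\overline{\mathbb{K}}$, but $F(\alpha^{d^{k}})=Q(\alpha^{d^{k}},f_{1}(\alpha^{d^{k}}),\ldots,f_{t}(\alpha^{d^{k}}))$ is a polynomial in the values $f_{i}(\alpha^{d^{k}})$, which are not algebraic over $\mathbb{K}$ under your contradiction hypothesis: assuming $\operatorname{trdeg}_{\overline{\mathbb{K}}}\{f_{1}(\alpha),\ldots,f_{t}(\alpha)\}<t$ gives you one algebraic relation among the values, not their algebraicity. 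So $F(\alpha^{d^{k}})\notin\overline{\mathbb{K}}$ in general and the Liouville step collapses except in the degenerate case $t=1$ (i.e.\ a pure transcendence statement). What the actual proof substitutes for this step is a full criterion for algebraic independence (Philippon's crit\`ere, in its non-archimedean version valid over $C$ in characteristic $p$, or equivalently Nesterenko's elimination-theoretic induction on ideals): one feeds it the sequence of polynomials $Q(\alpha^{d^{k}},X_{1},\ldots,X_{t})$, which have controlled degrees and heights and are very small at the point $(f_{1}(\alpha),\ldots,f_{t}(\alpha))$, and the criterion outputs the lower bound on the transcendence degree. Establishing that criterion in positive characteristic is the real content of \cite{Fernandes}. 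A secondary remark: your worry about differentiation and Hasse--Schmidt derivatives is misplaced here --- unlike the Siegel--Shidlovskii method for $E$-functions, Mahler's method never differentiates; the role of derivatives is played by the iterates $z\mapsto z^{d^{k}}$, and the required non-vanishing of $F(\alpha^{d^{k}})$ for infinitely many $k$ follows from the finiteness of $\operatorname{ord}_{0}F$ (since $F\not\equiv 0$ by algebraic independence of the functions) combined with elementary estimates, not from a characteristic-zero multiplicity lemma.
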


In general, few is known about the algebraic relations between the functions $f_{1}(z), \ldots, f_{n}(z)$ over $\overline{\mathbb{K}}(z)$. This makes a priori difficult the question to decide whether $f(\alpha)$ is transcendental or not over $\overline{\mathbb{K}}$. However, it is easier to study linear relations between the functions $f_{1}(z), \ldots, f_{n}(z)$ over $\overline{\mathbb{K}}(z)$. For example, when $\mathbb{K}$ is a number field, a basis of the set of linear relations over $\overline{\mathbb{Q}}(z)$ between the Mahler functions $f_{1}(z), \ldots, f_{n}(z)$ can be explicitly computed \cite{A-F,A-F_effectif}. The arguments used by B. Adamczewski and C. Faverjon to obtain this result belong to linear algebra and might fit for function fields. This could be a further perspective of study. For these reasons, we are interested in refining Theorem \ref{nish_gen}. Let $\bm{k}$ be a field. We say that a finitely generated field extension $\mathcal{E}=\bm{k}(u_{1}, \ldots, u_{n})$ of $\bm{k}$ is regular over $\bm{k}$ if the two following conditions are satisfied.
\begin{enumerate}
	\item
	$\mathcal{E}$ is separable over $\bm{k}$. That is, there exists a transcendence basis $\mathcal{F}$ of $\mathcal{E}$ over $\bm{k}$ such that $\mathcal{E}$ is a separable algebraic extension of $\bm{k}(\mathcal{F})$ (see \cite[Appendix A1.2]{Eisenbud} and also \cite{Maclane}).
	\item
	Every element of $\mathcal{E}$ that is algebraic over $\bm{k}$ belongs to $\bm{k}$.
\end{enumerate}

With this definition, our main result is the following.

\begin{thm}
\label{phil_p}
We continue with the assumptions of Theorem \ref{nish_gen}. Let us assume further that the extension \\$\overline{\mathbb{K}}(z)(f_{1}(z), \ldots, f_{n}(z))$ is regular over $\overline{\mathbb{K}}(z)$. 
 
Then, for all polynomial $P(X_{1},\ldots, X_{n})\in\overline{\mathbb{K}}[X_{1},\ldots,X_{n}]$ homogeneous in $X_{1},\ldots,X_{n}$ such that 
$$P(f_{1}(\alpha),\ldots, f_{n}(\alpha))=0,$$
there exists a polynomial $Q(z,X_{1},\ldots, X_{n})\in \overline{\mathbb{K}}[z][X_{1},\ldots,X_{n}]$ homogeneous in $X_{1},\ldots,X_{n}$ such that 
$$Q(z,f_{1}(z),\ldots, f_{n}(z))=0,$$
and 
$$Q(\alpha,X_{1},\ldots, X_{n})=P(X_{1},\ldots, X_{n}).$$
\end{thm}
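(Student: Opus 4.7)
My plan is to adapt the algebro-geometric argument used by Adamczewski--Faverjon in the number-field case (where it rests on Philippon's criterion) to positive characteristic, substituting Theorem \ref{nish_gen} for Nishioka's transcendence theorem. The regularity hypothesis, automatic in characteristic zero, will be used at the one step where the specialization at $z=\alpha$ of an irreducible projective variety defined over $\overline{\mathbb{K}}(z)$ could otherwise fail to remain irreducible, or of the same dimension, over $\overline{\mathbb{K}}$.

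First I would fix the geometric picture. Let $\mathfrak{I}\subset\overline{\mathbb{K}}(z)[X_{1},\ldots,X_{n}]$ denote the homogeneous ideal of polynomials vanishing at $(f_{1}(z),\ldots,f_{n}(z))$, and let $V\subset\mathbb{P}^{n-1}_{\overline{\mathbb{K}}(z)}$ be the projective variety it defines. Set $d:=\text{trdeg}_{\overline{\mathbb{K}}(z)}\{f_{1}(z),\ldots,f_{n}(z)\}$, so that $\dim V=d-1$. The hypothesis that $\overline{\mathbb{K}}(z)(f_{1},\ldots,f_{n})/\overline{\mathbb{K}}(z)$ is regular is precisely the statement that $V$ is geometrically integral.

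Next, clearing denominators in a finite generating set of $\mathfrak{I}$ realises $V$ as the generic fibre of a closed subscheme $\mathcal{V}\to U$, where $U\subset\mathbb{A}^{1}_{\overline{\mathbb{K}}}$ is an affine open containing $\alpha$. Shrinking $U$ if necessary, I may assume the family is flat, so that all fibres are equidimensional of dimension $d-1$ and geometric integrality of the generic fibre propagates to a dense open locus of $U$. By Theorem \ref{nish_gen}, the specialised point $\xi:=(f_{1}(\alpha):\cdots:f_{n}(\alpha))$ has transcendence degree $d$ over $\overline{\mathbb{K}}$, hence is a generic point of the irreducible component $W$ of $V_{\alpha}$ in which it lies, and $\dim W=d-1$. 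Any homogeneous $P\in\overline{\mathbb{K}}[X_{1},\ldots,X_{n}]$ vanishing at $\xi$ must then vanish identically on $W$, and pulling $P$ back through the flat family produces a homogeneous polynomial $Q(z,X_{1},\ldots,X_{n})\in\overline{\mathbb{K}}[z][X_{1},\ldots,X_{n}]$ that vanishes on $\mathcal{V}$ and reduces to $P$ modulo $z-\alpha$, which is exactly the conclusion sought.

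The main obstacle will be to certify that the component $W$ is truly cut out at $\alpha$ by the specialisation of the function-level ideal, rather than being reached only after an inseparable base change or as a proper subvariety of a larger component. In positive characteristic, a flat family whose generic fibre is geometrically integral may admit non-reduced or purely inseparable components in certain special fibres, and a priori nothing prevents $\alpha$ from lying in this bad locus. The separability clause in the definition of regular extension is precisely what excludes this pathology; to promote the argument from a generic specialisation to the specific point $\alpha$, I expect to exploit the Mahler structure, namely that iterating the functional equation produces the orbit $\alpha,\alpha^{d},\alpha^{d^{2}},\ldots\to 0$. Combining the regularity of $\alpha$ with respect to System \eqref{syst_gen} with the analyticity of the $f_{i}$ near $0$ should then bridge the gap between the generic fibres and the fibre above $\alpha$ itself.
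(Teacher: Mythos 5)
Your outline is essentially the paper's own strategy translated into geometric language: the paper encodes geometric integrality of the generic fibre as absolute primality of $\mathfrak{p}$ (via Zariski--Samuel), the good behaviour of nearby fibres as Krull's theorem that $\text{ev}_{\alpha}(\tilde{\mathfrak{p}}\cap \overline{\bm{k}}[z][X_{0},\ldots, X_{n}])$ remains absolutely prime for all but finitely many $\alpha$, and your appeal to flatness as an equality of Hilbert functions. But the two decisive steps are asserted rather than proved, and as written each is a genuine gap.

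First, ``pulling $P$ back through the flat family'' is the whole content of the local statement, not a consequence of what precedes it. That $P$ vanishes at $\xi$ gives only $P\in I(W)$ for the component $W$ containing $\xi$; to lift $P$ you need (i) the fibre above $\alpha$ to be irreducible, so that $P$ vanishes on the entire fibre, and (ii) the specialised ideal $\text{ev}_{\alpha}\bigl(\mathfrak{I}\cap\overline{\mathbb{K}}[z][X_{1},\ldots,X_{n}]\bigr)$ to equal, degree by degree, the full ideal of the fibre. Point (i) is Krull's specialisation theorem, and point (ii) is established in the paper by the Hilbert-function comparison $\phi(N)\leq\psi(N)$, which rests on the Nesterenko--Shidlovskii $\alpha$-basis lemma together with the height equality $\text{ht}(\tilde{\mathfrak{p}})=\text{ht}(\tilde{\mathfrak{p}}_{\alpha})$ supplied by Theorem \ref{nish_gen}. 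Both hold only for $\alpha$ outside a finite exceptional set --- exactly the set your ``shrinking $U$'' discards --- and nothing in the proposal shows that the given $\alpha$ avoids it; in general it need not.

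Second, the passage from a generic point of $U$ to the given regular $\alpha$ is a substantive argument, not a bridge that ``should'' appear: one conjugates the relation by $B(\alpha)=A^{-1}(\alpha)\cdots A^{-1}(\alpha^{d^{r-1}})$ to obtain a relation at $\alpha^{d^{r}}$ inside the good disc, applies the local statement there, and transports the lifted relation back by $B^{-1}(z)$ after clearing denominators (Proposition \ref{phil_p_inhom}). This yields a $Q$ of the correct total degree but not a homogeneous one, and the paper needs a further step --- Theorem \ref{thm_phil_lin} applied to the degree-$N$ monomials, which satisfy the Kronecker-power system of Lemma \ref{mahl_kronecker}, plus the check that $\overline{\mathbb{K}}(z)(\mathcal{G}_{N})$ is still regular --- to restore homogeneity. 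Your projective formulation could in principle keep everything homogeneous, but then $\xi=(f_{1}(\alpha):\cdots:f_{n}(\alpha))$ presupposes that the $f_{i}(\alpha)$ are not all zero, and the dimension count $\dim V=d-1$ is wrong in general: for $f_{1}=1$ and $f_{2}$ transcendental one has $d=1$ while $V=\mathbb{P}^{1}$. The paper sidesteps this by homogenising with an extra variable $X_{0}$, which is why $\tilde{\mathfrak{p}}$ lives in $\overline{\bm{k}}[z][X_{0},\ldots,X_{n}]$. These points must be supplied before the outline becomes a proof.
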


Let us note that any inhomogeneous algebraic relation
$$P(f_{1}(\alpha),\ldots, f_{n}(\alpha))=0$$
can be turned into a homogeneous algebraic relation between the values at $\alpha$ of the functions $f_{i}(z)$ and the additional function 1.
\medskip

As announced, Theorem \ref{phil_p} allows us to deal with linear independence over $\overline{\mathbb{K}}$ between values of Mahler functions.

\begin{cor}
	\label{cor_direct}
We continue with the assumptions of Theorem \ref{phil_p}. If the functions $f_{1}(z), \ldots, f_{n}(z)$ are linearly independent over $\overline{\mathbb{K}}(z)$, then, the numbers $f_{1}(\alpha), \ldots, f_{n}(\alpha)$ are linearly independent over $\overline{\mathbb{K}}$.
\end{cor}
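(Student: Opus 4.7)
The plan is to deduce Corollary \ref{cor_direct} as a direct specialization of Theorem \ref{phil_p}, exploiting the fact that a linear relation is nothing other than a homogeneous polynomial relation of degree one. No new ingredient should be required.

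Concretely, I would argue by contradiction. Suppose $c_{1},\ldots,c_{n}\in\overline{\mathbb{K}}$, not all zero, satisfy $\sum_{i=1}^{n}c_{i}f_{i}(\alpha)=0$, and set $P(X_{1},\ldots,X_{n})=\sum_{i=1}^{n}c_{i}X_{i}$. This polynomial is homogeneous of degree one in $X_{1},\ldots,X_{n}$, hence Theorem \ref{phil_p} supplies a polynomial $Q(z,X_{1},\ldots,X_{n})\in\overline{\mathbb{K}}[z][X_{1},\ldots,X_{n}]$, homogeneous in $X_{1},\ldots,X_{n}$, with $Q(z,f_{1}(z),\ldots,f_{n}(z))=0$ and $Q(\alpha,X_{1},\ldots,X_{n})=P(X_{1},\ldots,X_{n})$. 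Since specialization at $z=\alpha$ produces a form of degree exactly one in the $X_{i}$, the homogeneity degree of $Q$ in the variables $X_{i}$ must itself be one; thus $Q(z,X_{1},\ldots,X_{n})=\sum_{i=1}^{n}q_{i}(z)X_{i}$ for some $q_{i}(z)\in\overline{\mathbb{K}}[z]$ satisfying $q_{i}(\alpha)=c_{i}$.

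The equality $\sum_{i=1}^{n}q_{i}(z)f_{i}(z)=0$ is then a linear relation over $\overline{\mathbb{K}}(z)$ between the functions $f_{i}(z)$; the hypothesis of linear independence forces $q_{i}(z)\equiv 0$ for every $i$, and specializing at $z=\alpha$ delivers $c_{i}=0$ for all $i$, contradicting the initial choice of the $c_{i}$. The only substantive step is the application of Theorem \ref{phil_p}; the remainder is a routine specialization argument, so I do not anticipate any real obstacle beyond observing that the $X_{i}$-degree of $Q$ is preserved under the specialization $z\mapsto\alpha$.
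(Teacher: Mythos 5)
Your argument is correct and is precisely the intended derivation: the paper states Corollary \ref{cor_direct} as an immediate consequence of Theorem \ref{phil_p} without writing out a proof, and your specialization of the theorem to the degree-one homogeneous polynomial $P(X_{1},\ldots,X_{n})=\sum_{i}c_{i}X_{i}$, together with the observation that $Q$ must then be linear in the $X_{i}$ and that linear independence of the $f_{i}(z)$ forces $q_{i}(z)\equiv 0$, is exactly the routine step the author leaves to the reader. No gap.
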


Given $f(z)$ a Mahler function, one of the main goals of Mahler's method is to decide whether $f(\alpha)$ is transcendental or not over $\overline{\mathbb{K}}$. Corollary \ref{cor_direct} applied with the functions $1,f(z)$ shows the contribution of Theorem \ref{phil_p} in understanding the nature of $f(\alpha)$ when $\alpha$ is regular. Corollary \ref{cor_important} below states that this contribution even extends to the case of non-regular numbers $\alpha$. Let us start with a single transcendental $d$-Mahler function $f(z)$. Then, there exist an integer $m\geq 1$ and coprime polynomials $P_{-1}(z),\ldots, P_{m}(z)\in\mathbb{K}[z]$, $P_{m}(z)$ $\cancel\equiv$  $0$, such that
\begin{equation}
\label{eq_inh_min}
P_{-1}(z)+P_{0}(z)f(z)+P_{1}(z)f(z^{d})+\cdots+P_{m}(z)f(z^{d^{m}})=0. 
\end{equation}

If $m$ is minimal, we call \eqref{eq_inh_min} the minimal inhomogeneous equation of $f(z)$ over $\mathbb{K}(z)$. We can associate with this equation the $d$-Mahler system
\begin{equation}
\label{syst_inh}
\begin{pmatrix}
1 \\ f(z^{d}) \\ \vdots \\ f(z^{d^{m}}) 
\end{pmatrix} =A(z)\begin{pmatrix}
1 \\ f(z) \\ \vdots \\ f(z^{d^{m-1}}) 
\end{pmatrix},
\end{equation}
where $A(z)\in \text{GL}_{m+1}(\mathbb{K}(z))$ is the companion matrix of Equation \eqref{eq_inh_min}. Then, let us write $\sigma_{d}$ to denote the endomorphism of $\mathbb{K}\{z\}$ defined by $\sigma_{d}g(z)=g(z^{d})$. Then, we set 
$$\overline{\mathbb{K}}(z)(g(z))_{\sigma_{d}}=\overline{\mathbb{K}}(z)\left(\{\sigma_{d}^{i}g(z)\}_{i\geq 0}\right).$$ 
Now, let $\alpha\in\overline{\mathbb{K}}$, $0<|\alpha|<1$, be a regular number for System \eqref{syst_inh}. The only thing we know a priori is that
$$\text{trdeg}_{\overline{\mathbb{K}}(z)}\{1, f(z),\ldots,f(z^{d^{m-1}}) \}\geq 1.$$
Therefore, Theorem \ref{nish_gen} only gives
$$\text{trdeg}_{\overline{\mathbb{K}}}\{1, f(\alpha),\ldots,f(\alpha^{d^{m-1}})\}\geq 1.$$
That is, there exists at least one transcendental number among $f(\alpha),\ldots,f(\alpha^{d^{m-1}})$. But we cannot conclude that $f(\alpha)$ is transcendental. Our contribution to this problem is the following result.

\begin{cor}
	\label{cor_important}
	Let $f(z)\in\mathbb{K}\{z\}$ be a $d$-Mahler transcendental function over $\mathbb{K}(z)$. Let $\alpha\in\overline{\mathbb{K}}$, $0<|\alpha|<1$ such that $\alpha$ is in the disc of convergence of $f(z)$. Let us assume that the extension $\overline{\mathbb{K}}(z)(f(z))_{\sigma_{d}}$ is regular over $\overline{\mathbb{K}}(z)$.

	Then, we have the following.
	\begin{enumerate}
		\item 
		The number $f(\alpha)$ is either transcendental or in $\mathbb{K}(\alpha)$.
		\item
		If $\alpha$ is a regular number with respect to $d$-Mahler System \eqref{syst_inh} satisfied by $f(z)$ (that is $P_{0}(\alpha^{d^{k}})P_{m}(\alpha^{d^{k}})\neq 0$ for all integer $k\geq 0$), then $f(\alpha)$ is transcendental over $\overline{\mathbb{K}}$.
	\end{enumerate}
\end{cor}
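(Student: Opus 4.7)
The plan is to prove part 2 directly via Corollary \ref{cor_direct} applied to the companion system \eqref{syst_inh}, and then to derive part 1 by reducing, through iteration of the Mahler equation, to the regular case treated in part 2.

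For part 2, I would apply Corollary \ref{cor_direct} to the $m+1$ functions $1, f(z), f(z^d), \ldots, f(z^{d^{m-1}})$ that form the vector in \eqref{syst_inh}. These are linearly independent over $\overline{\mathbb{K}}(z)$: any nontrivial relation, after clearing denominators and descending via a $\mathbb{K}$-basis of $\overline{\mathbb{K}}$, would produce an inhomogeneous Mahler equation of order strictly less than $m$, contradicting the minimality of \eqref{eq_inh_min}. Moreover, $\overline{\mathbb{K}}(z)(1, f(z), \ldots, f(z^{d^{m-1}}))$ is an intermediate field of the regular extension $\overline{\mathbb{K}}(z)(f(z))_{\sigma_d}$ over $\overline{\mathbb{K}}(z)$, and is therefore itself regular over $\overline{\mathbb{K}}(z)$: separability descends to intermediate extensions, and any element algebraic over $\overline{\mathbb{K}}(z)$ living in the smaller field is a fortiori algebraic in the larger one and hence lies in $\overline{\mathbb{K}}(z)$. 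Corollary \ref{cor_direct} then yields the $\overline{\mathbb{K}}$-linear independence of $1, f(\alpha), \ldots, f(\alpha^{d^{m-1}})$, so in particular $f(\alpha) \notin \overline{\mathbb{K}}$, i.e., $f(\alpha)$ is transcendental over $\overline{\mathbb{K}}$.

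For part 1, if $\alpha$ is regular, part 2 already delivers transcendence. Otherwise, since $|\alpha|<1$ forces $|\alpha^{d^k}| \to 0$ and since $P_0, P_m$ have only finitely many zeros, there is a largest integer $K \geq 0$ with $P_0(\alpha^{d^K}) P_m(\alpha^{d^K}) = 0$; then $\alpha' := \alpha^{d^{K+1}}$ is regular, and part 2 together with Corollary \ref{cor_direct} gives the $\overline{\mathbb{K}}$-linear independence of $1, f(\alpha'), f(\alpha'^d), \ldots, f(\alpha'^{d^{m-1}})$. Assuming $f(\alpha) \in \overline{\mathbb{K}}$, I would prove by backward induction on $j$, from $j = K$ down to $j = 0$, that $f(\alpha^{d^j}) \in \mathbb{K}(\alpha^{d^j}) \subseteq \mathbb{K}(\alpha)$. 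At each step one evaluates \eqref{eq_inh_min} at $z = \alpha^{d^j}$ and combines the resulting $\overline{\mathbb{K}}$-linear relation among $1, f(\alpha^{d^j}), \ldots, f(\alpha^{d^{j+m}})$ with the $\overline{\mathbb{K}}$-linear independence of the later values, using the coprimality of $P_{-1}, P_0, \ldots, P_m$ to rule out simultaneous vanishing of all $P_i(\alpha^{d^j})$. In the nondegenerate case this forces the conclusion $f(\alpha^{d^j}) = -P_{-1}(\alpha^{d^j})/P_0(\alpha^{d^j}) \in \mathbb{K}(\alpha^{d^j})$, while the degenerate configurations are ruled out as inconsistent with coprimality.

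The main technical obstacle is propagating the $\overline{\mathbb{K}}$-linear independence backward from $\alpha'$ through the possibly singular specializations $A(\alpha^{d^j})$ of the system matrix. Corollary \ref{cor_direct} supplies linear independence only at the genuinely regular point $\alpha'$, but the inductive step at a non-regular $\alpha^{d^j}$ with $j \leq K$ needs an analogous property for partial tuples involving $f(\alpha^{d^{j+1}}), \ldots, f(\alpha^{d^{j+m-1}})$. Handling this—via a case split according to whether $P_0(\alpha^{d^j})$, $P_m(\alpha^{d^j})$, or both vanish, and a careful bookkeeping of which subfamilies inherit independence from the regular point through the Mahler equations—is the technical heart of the proof.
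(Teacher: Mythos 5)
Your treatment of the second assertion is correct and is essentially the paper's own argument: linear independence of $1,f(z),\ldots,f(z^{d^{m-1}})$ over $\overline{\mathbb{K}}(z)$ from minimality of \eqref{eq_inh_min}, regularity of the intermediate field $\overline{\mathbb{K}}(z)(1,f(z),\ldots,f(z^{d^{m-1}}))$ inside $\overline{\mathbb{K}}(z)(f(z))_{\sigma_{d}}$, and then Corollary \ref{cor_direct} (equivalently Theorem \ref{phil_p}) at the regular point.

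The gap is in the first assertion. Your inductive invariant --- that $f(\alpha^{d^{j}})\in\mathbb{K}(\alpha^{d^{j}})$ for $j=K$ down to $0$ --- is not the right statement and is false in general: assuming only $f(\alpha)\in\overline{\mathbb{K}}$, nothing forces the intermediate values $f(\alpha^{d^{j}})$, $1\leq j\leq K$, to be algebraic (indeed $f(\alpha^{d^{K+1}})$ is transcendental by part 2), so the descent cannot even start at $j=K$: evaluating \eqref{eq_inh_min} at $z=\alpha^{d^{K}}$ merely expresses $f(\alpha^{d^{K}})$ as an affine $\mathbb{K}(\alpha)$-combination of the transcendental values $f(\alpha^{d^{K+1}}),\ldots,f(\alpha^{d^{K+m}})$ when $P_{0}(\alpha^{d^{K}})\neq 0$, and gives no handle on $f(\alpha^{d^{K}})$ at all when $P_{0}(\alpha^{d^{K}})=0$. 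That degenerate case is \emph{not} excluded by coprimality: $P_{0}(\alpha^{d^{j}})=0$ with $P_{m}(\alpha^{d^{j}})\neq 0$ is perfectly compatible with $\gcd(P_{-1},\ldots,P_{m})=1$, and once it occurs the scalar backward substitution breaks down, because $1,f(\alpha^{d^{j+1}}),\ldots,f(\alpha^{d^{j+m}})$ need not be linearly independent for $j<K$. The paper circumvents exactly this by working with the full $(m+1)\times(m+1)$ system: it writes the vector $(1,f(z),\ldots,f(z^{d^{m-1}}))^{T}$ as $A_{l}(z)$ times $(1,f(z^{d^{l}}),\ldots,f(z^{d^{l+m-1}}))^{T}$ with $A_{l}(z)=A^{-1}(z)\cdots A^{-1}(z^{d^{l-1}})$, and proves the key fact that the \emph{product} $A_{l}(z)$ has no pole at $\alpha$ even though individual factors may: if it did, multiplying by $(z-\alpha)^{r}$ and evaluating would yield a nontrivial $\overline{\mathbb{K}}$-linear relation among $1,f(\alpha^{d^{l}}),\ldots,f(\alpha^{d^{l+m-1}})$, contradicting their independence at the regular point. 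Then the row vector $\Lambda=(f(\alpha),-1,0,\ldots,0)$, whose entries lie in $\overline{\mathbb{K}}$ precisely under the hypothesis $f(\alpha)\in\overline{\mathbb{K}}$, annihilates $A_{l}(\alpha)(1,f(\alpha^{d^{l}}),\ldots)^{T}$, hence $\Lambda A_{l}(\alpha)=0$ by linear independence, and reading this off on a column of $A_{l}(\alpha)$ with nonzero first entry gives $f(\alpha)=u_{1}/u_{0}\in\mathbb{K}(\alpha)$. This global regularity of the composed transition matrix at $\alpha$ is the idea your case-by-case bookkeeping is missing, and I do not see how to recover it within the purely scalar framework you sketch.
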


Such results were first established in the setting of linear differential equations over $\overline{\mathbb{Q}}(z)$, especially for E-functions. Theorem \ref{nish_gen} is the analogue of Siegel-Shidlovskii's Theorem \cite{S-S}. Theorem \ref{phil_p} is the analogue of a theorem of F. Beukers \cite{Beukers}. F. Beukers's proof uses Galois Theory and results from Y. André. Moreover, Y. André proved \cite{andre} that the theorem of F. Beukers can be deduced from Siegel-Shidlovskii's theorem, using a new method involving the theory of affine quasi-homogeneous varieties. Finally, the analogue of Corollary \ref{cor_important} for E-functions is stated in \cite{Rivoal-Fischler} (see also \cite{Boris-Tanguy}). Getting back to Mahler functions, Theorem \ref{phil_p} is the analogue for function fields of a theorem of B. Adamczewski and C. Faverjon \cite{A-F}, obtained as a consequence of a result of P. Philippon \cite{Phil1}. The analogues of Corollary \ref{cor_direct} and Corollary \ref{cor_important} for number fields are proved in \cite{A-F}.

Besides, if $f_{1}(z),\ldots, f_{n}(z)$ are either $E$-functions or Mahler functions over $\overline{\mathbb{Q}}(z)$, the extension\\ $\overline{\mathbb{Q}}(z)\left(f_{1}(z),\ldots, f_{n}(z)\right)$ is always regular over $\overline{\mathbb{Q}}(z)$. This is straightforward for E-functions for they are analytic in the whole complex plane. For Mahler functions, this can be deduced  \cite{A-F,Phil1} from the fact that a Mahler function with coefficients in $\overline{\mathbb{Q}}$ is either rational or transcendental \cite[Theorem 5.1.7]{N}).  But when $\mathbb{K}$ is a function field of characteristic $p$, such a dichotomy does not hold anymore and there do exist non-regular Mahler extensions. Let us provide a trivial example based on the following $p$-Mahler system.
  \begin{equation*}  
  \begin{pmatrix}
  f_{1}(z^{p}) \\ f_{2}(z^{p})
  \end{pmatrix}=\begin{pmatrix}
  1 & 0 \\ -z & 1
  \end{pmatrix}\begin{pmatrix}
  f_{1}(z) \\ f_{2}(z)
  \end{pmatrix}.  
  \end{equation*}

 A solution to this system is given by
 $$f_{1}(z)=1, f_{2}(z)=\sum_{n=0}^{+\infty}z^{p^{n}}.$$
 Furthermore, $f_{2}(z)$ is algebraic because $f_{2}(z)^{p}=f_{2}(z^{p})=f_{2}(z)-z$. On the other hand, the sequence of coefficients of $f_{2}(z)$ is not eventually periodic. Therefore, $f_{2}(z)$ is not rational. It follows that the extension $\mathcal{E}=\overline{\mathbb{K}}(z)(f_{1}(z),f_{2}(z))$ is not regular over $\overline{\mathbb{K}}(z)$. Now, let $\alpha\in\overline{\mathbb{K}}$, $0<|\alpha|<1$ and $\lambda=f_{2}(\alpha)\in\overline{\mathbb{K}}$. Then, $\lambda f_{1}(\alpha)-f_{2}(\alpha)=0$ is a non-trivial linear relation between $f_{1}(\alpha)$ and $f_{2}(\alpha)$ over $\overline{\mathbb{K}}$. However, there is no non-trivial linear relation between the function $f_{1}(z)$ and $f_{2}(z)$ over $\overline{\mathbb{K}}(z)$, because $f_{2}(z)$ is not rational. Hence, the conclusion of Theorem \ref{phil_p} does not hold in this case. In Theorem \ref{thm_faux}, we state that this example reflects a general behaviour. That is, the conclusion of Theorem \ref{phil_p} is never satisfied when the extension $\overline{\mathbb{K}}(z)\left(f_{1}(z),\ldots, f_{n}(z)\right)$ is not regular over $\overline{\mathbb{K}}(z)$. Let us first introduce some definitions and notations. 
 Let $\bm{k}$ be a valued field and $\bm{k}_{c}$ its completion. Note that its valuation extends uniquely to $\overline{\bm{k}_{c}}$ \cite[II.2, Corollary 2]{Serre}. We let $\tilde{\bm{k}}$ denote the completion of $\overline{\bm{k}_{c}}$ with respect to this valuation. Then, $\tilde{\bm{k}}$ is complete and algebraically closed. Now, let $\alpha\in\tilde{\bm{k}}$. We say that a function is analytic at $\alpha$ if it admits a convergent power series expansion in a connected open neighbourhood of $\alpha$, with coefficients in $\tilde{\bm{k}}$. If $\mathcal{U}\subseteq\tilde{\bm{k}}$ is a domain, we say that a function is analytic on $\mathcal{U}$ if it is analytic at each point of $\mathcal{U}$. If the power series expansion of $f(z)$ at $\alpha\in\mathcal{U}$ has coefficients in a sub-field $L$ of $\tilde{\bm{k}}$, we say that $f(z)$ is analytic at $\alpha$ over $L$ and denote the set of all such functions by $L\{z-\alpha\}$. Now, let $f_{1}(z),\ldots, f_{n}(z)\in\bm{k}\{z\}$. We set 
$$\mathfrak{p}=\{Q(z,X_{1},\ldots,X_{n})\in\overline{\bm{k}}(z)[X_{1},\ldots,X_{n}], Q(z,f_{1}(z),\ldots,f_{n}(z))=0\}.$$
If the functions $f_{1}(z),\ldots, f_{n}(z)$ are analytic at $\alpha\in\overline{\bm{k}}$, we set 
$$\mathfrak{p}_{\alpha}=\{P(X_{1},\ldots,X_{n})\in\overline{\bm{k}}[X_{1},\ldots,X_{n}], P(f_{1}(\alpha),\ldots,f_{n}(\alpha))=0\}.$$ 
Let $R$ be a ring. If $\mathfrak{q}$ is an ideal of $A=R[X_{1},\ldots, X_{n}]$, we write $\tilde{\mathfrak{q}}$ to refer to the homogenized ideal of $\mathfrak{q}$. It is the ideal of $A'=R[X_{0},X_{1},\ldots, X_{n}]$ generated by all the homogeneous polynomials $Q(X_{0},\ldots, X_{n})\in A'$ for which there exists a polynomial $P(X_{1},\ldots, X_{n})\in\mathfrak{q}$ such that $Q(1,X_{1},\ldots, X_{n})=P(X_{1},\ldots, X_{n})$. Finally, let $\text{ev}_{\alpha}(\tilde{\mathfrak{p}}\cap \overline{\bm{k}}[z][X_{0},\ldots, X_{n}])$ denote the homogeneous ideal over $\overline{\bm{k}}[X_{0},\ldots, X_{n}]$ constructed by evaluating the ideal $\tilde{\mathfrak{p}}\cap \overline{\bm{k}}[z][X_{0},\ldots, X_{n}]$ at $z=\alpha$.
With these definitions, the conclusion of Theorem \ref{phil_p} is equivalent to the following assertion.
$$\text{ev}_{\alpha}(\tilde{\mathfrak{p}}\cap \overline{\bm{k}}[z][X_{0},\ldots, X_{n}])=\tilde{\mathfrak{p}}_{\alpha}.$$
Now, we can state the announced result.

\begin{thm}
	\label{thm_faux}
	Let $\bm{k}$ be a valued field and let us assume that $f_{1}(z),\ldots, f_{n}(z)\in\overline{\bm{k}}\{z\}$ are analytic functions in a domain $\mathcal{U}\subseteq\tilde{\bm{k}}$ which contains the origin. Let $\alpha\in\mathcal{U}\cap\overline{\bm{k}}$. Let us assume further that the extension $\overline{\bm{k}}(z)(f_{1}(z), \ldots, f_{n}(z))$ is not regular over $\overline{\bm{k}}(z)$. Then we have
	$$\text{ev}_{\alpha}(\tilde{\mathfrak{p}}\cap \overline{\bm{k}}[z][X_{0},\ldots, X_{n}])\varsubsetneq\tilde{\mathfrak{p}}_{\alpha}.$$
	In other words, the conclusion of Theorem \ref{phil_p} does not hold.
\end{thm}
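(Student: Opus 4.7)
The plan is to construct, from the failure of regularity of $\overline{\bm{k}}(z)(f_1,\ldots,f_n)$ over $\overline{\bm{k}}(z)$, an explicit homogeneous polynomial in $\tilde{\mathfrak{p}}_\alpha$ that does not belong to $\text{ev}_\alpha(\tilde{\mathfrak{p}} \cap \overline{\bm{k}}[z][X_0,\ldots,X_n])$, generalizing the Artin-Schreier example $f_1(z) = 1$, $f_2(z)^p - f_2(z) + z = 0$ exhibited just above the statement. The inclusion $\text{ev}_\alpha(\tilde{\mathfrak{p}} \cap \overline{\bm{k}}[z][X_0,\ldots,X_n]) \subseteq \tilde{\mathfrak{p}}_\alpha$ is immediate: any homogeneous $Q \in \tilde{\mathfrak{p}} \cap \overline{\bm{k}}[z][X_0,\ldots,X_n]$ satisfies $Q(z,1,f_1(z),\ldots,f_n(z))=0$ in $\overline{\bm{k}}\{z\}$, and specialization at $z=\alpha$ yields a polynomial vanishing at $(1, f_1(\alpha),\ldots,f_n(\alpha))$.

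For strictness, the argument splits into two cases according to which defining property of regularity fails. The principal case is when $\overline{\bm{k}}(z)$ is not algebraically closed in $\overline{\bm{k}}(z)(f_1,\ldots,f_n)$. One picks $\theta \in \overline{\bm{k}}(z)(f_1,\ldots,f_n) \setminus \overline{\bm{k}}(z)$ algebraic over $\overline{\bm{k}}(z)$, with irreducible minimal polynomial $\mu(z, Y) = \sum_{i=0}^d c_i(z) Y^i \in \overline{\bm{k}}[z][Y]$ of degree $d \geq 2$, and writes $\theta = G(f_1,\ldots,f_n)/Q(f_1,\ldots,f_n)$ with $G, Q \in \overline{\bm{k}}[z][X_1,\ldots,X_n]$. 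After adjusting the representation so that $Q(\alpha, f_1(\alpha),\ldots,f_n(\alpha)) \neq 0$, the algebraic function $\theta$ is analytic at $\alpha$ and $\lambda := \theta(\alpha)$ satisfies $\mu(\alpha, \lambda) = 0$, hence $\lambda \in \overline{\bm{k}}$. The candidate relation is $R(X_1,\ldots,X_n) := G(\alpha, X_1,\ldots,X_n) - \lambda Q(\alpha, X_1,\ldots,X_n)$, which lies in $\mathfrak{p}_\alpha$ since $R(f_1(\alpha),\ldots,f_n(\alpha)) = Q(\alpha, f_1(\alpha),\ldots,f_n(\alpha))(\theta(\alpha)-\lambda) = 0$; its homogenization $\tilde R$ thus lies in $\tilde{\mathfrak{p}}_\alpha$. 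To see that $\tilde R$ is not a specialization, one introduces the auxiliary polynomial $M(z, X) := Q(z, X)^d \mu(z, G(z, X)/Q(z, X)) = \sum_{i=0}^d c_i(z) G(z, X)^i Q(z, X)^{d-i}$, which lies in $\mathfrak{p} \cap \overline{\bm{k}}[z][X_1,\ldots,X_n]$ since $M(z, f_1(z),\ldots,f_n(z)) = Q(z,f)^d \mu(z,\theta) = 0$. Its specialization factors in $\overline{\bm{k}}[X_1,\ldots,X_n]$ as $M(\alpha, X) = c_d(\alpha) \prod_{j=1}^{d} (G(\alpha, X) - \lambda_j Q(\alpha, X))$, the $\lambda_j$ being the roots of $\mu(\alpha, Y)$ in $\overline{\bm{k}}$. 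The full product lies in $\text{ev}_\alpha(\mathfrak{p} \cap \overline{\bm{k}}[z][X_1,\ldots,X_n])$, but the single factor $R$ corresponding to $\lambda = \lambda_{j_0}$ cannot, by the irreducibility of $\mu(z, Y)$ over $\overline{\bm{k}}(z)$.

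In the remaining case, where $\overline{\bm{k}}(z)$ is algebraically closed in $\overline{\bm{k}}(z)(f_1,\ldots,f_n)$ but the extension is inseparable, a parallel argument produces, in place of a factorization of $\mu(\alpha, Y)$ over $\overline{\bm{k}}$, a non-reduced structure in the specialized fiber and again yields a polynomial in $\mathfrak{p}_\alpha$ not of the required form. The main obstacle is the rigorous derivation that the single factor $R$ is not in $\text{ev}_\alpha(\mathfrak{p} \cap \overline{\bm{k}}[z][X_1,\ldots,X_n])$: geometrically this amounts to showing that the scheme-theoretic fiber at $\alpha$ of $V(\mathfrak{p} \cap \overline{\bm{k}}[z][X_1,\ldots,X_n])$ acquires extra geometric components, or multiplicity in the inseparable case, beyond the single reduced point $(f_1(\alpha),\ldots,f_n(\alpha))$. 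This is precisely the non-regularity of the generic fiber translated to the specialized fiber, and passing from one to the other requires a careful local analysis of the ring $\overline{\bm{k}}[z][X_1,\ldots,X_n]/(\mathfrak{p} \cap \overline{\bm{k}}[z][X_1,\ldots,X_n])$ at the prime $(z - \alpha)$.
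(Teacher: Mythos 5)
Your candidate relation is essentially the paper's: writing the algebraic non-rational element $\theta = G(f)/Q(f)$ and specializing $G(\alpha,X)-\theta(\alpha)\,Q(\alpha,X)$ at $(f_{1}(\alpha),\ldots,f_{n}(\alpha))$ is exactly what the paper does (with its $a(z)$ in place of your $\theta$, and a normalization by powers of $(z-\alpha)$ to ensure the specialized relation is nonzero). But the step you flag as ``the main obstacle'' is the entire content of the theorem, and the route you sketch for it does not close. Irreducibility of $\mu(z,Y)$ over $\overline{\bm{k}}(z)$ only tells you that the particular element $M(z,X)=\sum_{i}c_{i}(z)G^{i}Q^{d-i}$ of $\mathfrak{p}\cap\overline{\bm{k}}[z][X_{1},\ldots,X_{n}]$ does not itself factor over $\overline{\bm{k}}[z][X_{1},\ldots,X_{n}]$ compatibly with the factorization of $M(\alpha,X)$; it says nothing about whether some \emph{other} element of the ideal $\mathfrak{p}\cap\overline{\bm{k}}[z][X_{1},\ldots,X_{n}]$ --- which is large and certainly not generated by $M$ --- specializes to your $R$. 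Ruling that out is where all the work lies. The paper does it by choosing (Remark \ref{alpha_base_poly}, after Nesterenko--Shidlovskii) a basis $\{M_{l}(f_{1}(z),\ldots,f_{n}(z))\}_{l}$ of the relevant space of polynomials in the $f_{i}$ over $\overline{\bm{k}}(z)$ with the extra property that every polynomial with coefficients in the local ring $R_{\alpha}$ is an $R_{\alpha}$-combination of the $M_{l}(f)$. Any hypothetical lift $Q(z,X)$ of the relation then decomposes as $\sum_{l}Q_{l}(z)M_{l}+\sum_{j}R_{j}(z)N_{j}$ with $R_{j}(\alpha)=0$; rewriting the $N_{j}(f)$ in terms of the $M_{l}(f)$ with $R_{\alpha}$-coefficients and invoking the linear independence of the $M_{l}(f)$ over $\overline{\bm{k}}(z)$ forces $F_{l}(\alpha)=0$ for all $l$, contradicting the normalization. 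Your scheme-theoretic reformulation (extra geometric components in the fiber at $\alpha$) is a correct picture of what must be shown, but as written it is a restatement of the goal, not a proof.

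A second gap: your ``remaining case'' (inseparability with $\overline{\bm{k}}(z)$ algebraically closed in the extension) is in fact vacuous, and your sketch for it (``a parallel argument produces a non-reduced structure'') would in any case need to be an actual argument. The paper proves (Proposition \ref{tjrs_sep}, via Mac Lane's criterion and Cartier operators) that for functions analytic on a domain containing the origin the extension $\overline{\bm{k}}(z)(f_{1}(z),\ldots,f_{n}(z))$ is \emph{always} separable over $\overline{\bm{k}}(z)$; hence non-regularity can only arise from the failure of algebraic closedness, and your first case is the only one. Establishing this reduction is itself a non-trivial part of the proof that your proposal does not supply.
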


Besides, let $f_{1}(z),\ldots, f_{n}(z)\in\mathbb{K}\{z\}$ be $d$-Mahler functions over $\mathbb{K}(z)$. Then, we say that the field extension $\overline{\mathbb{K}}(z)\left(f_{1}(z),\ldots, f_{n}(z)\right)$ is a $d$-Mahler extension over $\overline{\mathbb{K}}(z)$, or, for short, $d$-Mahler. Now, if $p\nmid d$, we show that $d$-Mahler extensions over $\overline{\mathbb{K}}(z)$ behave just as in characteristic zero.
 
 \begin{thm}
 	\label{l_mahler_bis}
 	Let $d\geq 2$ be an integer such that $p\nmid d$. Then, a $d$-Mahler function $f(z)\in\mathbb{K}\{z\}$ over $\mathbb{K}(z)$ is either transcendental or in $\mathbb{K}(z)$.
 \end{thm}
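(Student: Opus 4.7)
The plan is to argue by contradiction: assume $f(z)\in\mathbb{K}\{z\}$ is a $d$-Mahler function with $p\nmid d$, algebraic over $\mathbb{K}(z)$, but $f\notin\mathbb{K}(z)$. It suffices to prove $f\in\overline{\mathbb{K}}(z)$, since Galois descent then yields $f\in\mathbb{K}(z)$ (the reduced rational representation of $f$ is $\mathrm{Gal}(\overline{\mathbb{K}}/\mathbb{K})$-invariant because the coefficients of $f$ lie in $\mathbb{K}$). I would next reduce to the case where $f$ is separable over $\overline{\mathbb{K}}(z)$: let $k\geq 0$ be minimal such that $g:=f^{p^k}$ is separable algebraic over $\overline{\mathbb{K}}(z)$. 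Since in characteristic $p$ the operation $(\cdot)^{p^k}$ is additive, raising the Mahler equation to the $p^k$-th power yields $\sum_{i}P_i(z)^{p^k}g(z^{d^i})=0$, so $g$ is also $d$-Mahler. Once we prove $g\in\overline{\mathbb{K}}(z)$, the fact that $g=f^{p^k}\in\mathbb{K}[[z]]$ only involves powers of $z^{p^k}$ in its power-series expansion forces $g\in\overline{\mathbb{K}}(z^{p^k})$; writing $g(z)=h(z^{p^k})$ with $h=P/Q\in\overline{\mathbb{K}}(y)$ and taking $p^k$-th roots of coefficients of $P$ and $Q$ (using that $\overline{\mathbb{K}}$ is perfect), one obtains $g=R^{p^k}$ for some $R\in\overline{\mathbb{K}}(z)$, whence $(f-R)^{p^k}=0$ and $f=R\in\overline{\mathbb{K}}(z)$.

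For the separable case, I would employ a singularity-propagation argument. Let $S\subset\overline{\mathbb{K}}^\times$ be the finite set of points outside $\{0,\infty\}$ where $g$, viewed as a multi-valued algebraic function over $\overline{\mathbb{K}}(z)$, has a pole or a branch point. Using the Mahler equation for $g$, one shows that for any $u\in S$ and any $w\in\overline{\mathbb{K}}$ with $w^{d^n}=u$, either $P_n(w)=0$, or $w^{d^i}\in S$ for some $i\in\{0,\ldots,n-1\}$. The hypothesis $p\nmid d$ is essential here: the polynomial $X^{d^n}-u$ is then separable, with $d^n$ distinct roots. Since $P_n$ has only finitely many zeros, iterating the propagation produces an infinite chain of distinct pre-images in $S$, contradicting its finiteness. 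It follows that $S=\emptyset$, so the algebraic function $g$ has no singularities outside $\{0,\infty\}$ and is analytic at the origin; it is therefore rational, i.e., $g\in\overline{\mathbb{K}}(z)$ (an algebraic function unramified on $\mathbb{G}_m$ admitting a single-valued power-series expansion at $0$ corresponds to the trivial cover of $\mathbb{G}_m$).

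The main obstacle I expect is making the singularity-propagation step fully precise. This requires defining the singular locus rigorously in the non-archimedean analytic setting over $C$, handling the finitely many exceptional $w$'s that are zeros of $P_n$ without disrupting the counting, and verifying that iterated pre-images are genuinely distinct elements of $S$. A careful use of degree bounds combined with the hypothesis $p\nmid d$ should guarantee this proliferation. The role of $p\nmid d$ is crucial precisely here: when $p\mid d$, the map $z\mapsto z^d$ factors through Frobenius, which is purely inseparable and collapses pre-images, so the propagation argument fails, and non-rational algebraic $p$-Mahler examples such as $f_2(z)=\sum z^{p^n}$ (already featured in the paper) arise.
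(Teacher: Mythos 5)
Your reductions are fine: the Galois descent from $\overline{\mathbb{K}}(z)$ to $\mathbb{K}(z)$, and the passage to $g=f^{p^k}$ separable (with recovery of $f$ by extracting $p^k$-th roots of the coefficients of a rational $g$) are both sound, and the propagation step in $\mathbb{G}_m$ can indeed be made rigorous using that $z\mapsto z^{d^n}$ is étale away from $0$ and $\infty$ when $p\nmid d$. The fatal gap is the very last inference: in characteristic $p$ it is \emph{false} that a separable algebraic function with no poles and no branch points on $\mathbb{G}_m$, admitting a single-valued power series at $0$, must be rational. The paper's own example $y=\sum_{n\geq 0}z^{p^n}$ refutes it: $y$ satisfies $y^{p}-y+z=0$, a polynomial whose $Y$-discriminant is a nonzero constant, so the corresponding cover is étale over all of $\mathbb{A}^{1}$ (hence $S=\emptyset$), $y$ is analytic at $0$, and yet $y\notin\overline{\mathbb{K}}(z)$. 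The point is that $\pi_1$ of $\mathbb{G}_m$ (and even of $\mathbb{A}^1$) in characteristic $p$ has a huge wild part: a cover étale over $\mathbb{G}_m$ need not be a Kummer cover $z\mapsto z^{N}$; it may be wildly ramified at $0$ and $\infty$, and your singular set $S$, confined to $\overline{\mathbb{K}}^{\times}$ and propagated by $z\mapsto z^{d^n}$ (which fixes $0$ and $\infty$), never detects this. Note also that your closing diagnosis of where $p\nmid d$ enters is off target: for $\sum z^{p^n}$ the propagation argument is not ``collapsed by Frobenius'' --- it is vacuous, since $S=\emptyset$ from the start; the example escapes your conclusion at the final step, not at the propagation step.

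What is missing is precisely the control of wild ramification over $0$ and $\infty$, and this is where the real content of the theorem lies. The paper supplies it in Proposition \ref{l_prop}: the Mahler equation makes $\phi_d$ an endomorphism of the function field $L$, whence a self-map $f$ of the associated curve $X$ that is totally ramified over $\varphi^{-1}(0)\cup\varphi^{-1}(\infty)$; the Hurwitz formula together with Schmid's finiteness theorem for automorphisms forces $g(X)=0$ and $f$ unramified elsewhere, and the resulting identity $P(u^{d})=P(u)^{d}$ forces $P(u)=\lambda u^{N}$ \emph{because} $p\nmid d$ --- this is the step that rules out Artin--Schreier-type behaviour at $0$ and $\infty$. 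Corollary \ref{l_cor} then pins down $N=1$ using the embedding into $C((z))$, and Cartier operators descend from $f^{p^{s}}\in C(z)$ to $f\in\mathbb{K}(z)$. Your proposal contains no substitute for this mechanism, so the separable case --- the heart of the proof --- remains unproved.
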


 \begin{cor}
 	\label{l_mahler}
 	Let $d\geq 2$ be an integer such that $p\nmid d$. Then, a $d$-Mahler extension over $\overline{\mathbb{K}}(z)$ is always regular over $\overline{\mathbb{K}}(z)$.
 \end{cor}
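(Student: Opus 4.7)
The plan is to verify the two defining conditions of regularity for $\mathcal{E}:=\overline{\mathbb{K}}(z)(f_{1}(z),\ldots,f_{n}(z))$ over $\overline{\mathbb{K}}(z)$, both as consequences of Theorem~\ref{l_mahler_bis}.

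First I would show that $\overline{\mathbb{K}}(z)$ is algebraically closed in $\mathcal{E}$. Since $\mathcal{E}/\overline{\mathbb{K}}(z)$ is finitely generated, the algebraic closure $\mathcal{A}$ of $\overline{\mathbb{K}}(z)$ in $\mathcal{E}$ is a finite extension of $\overline{\mathbb{K}}(z)$. The Mahler system gives that $\sigma_{d}$ stabilizes $\mathcal{E}$, and since $\sigma_{d}$ is also an endomorphism of $\overline{\mathbb{K}}(z)$ it preserves algebraicity, whence $\sigma_{d}(\mathcal{A})\subseteq\mathcal{A}$. For any $g\in\mathcal{A}$, the iterates $g,\sigma_{d}g,\sigma_{d}^{2}g,\ldots$ therefore lie in the finite-dimensional $\overline{\mathbb{K}}(z)$-vector space $\mathcal{A}$ and so satisfy a nontrivial linear relation over $\overline{\mathbb{K}}(z)$; clearing denominators, $g$ is a $d$-Mahler function. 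Taking a finite extension $\mathbb{L}$ of $\mathbb{K}$ containing the finitely many coefficients involved and, if necessary, multiplying $g$ by a power of $z$ to land in $\mathbb{L}\{z\}$ (using that such a normalization preserves the Mahler property), Theorem~\ref{l_mahler_bis} applied over $\mathbb{L}$ yields that $g$ is either transcendental over $\mathbb{L}(z)$ or belongs to $\mathbb{L}(z)\subseteq\overline{\mathbb{K}}(z)$. Since $g$ is algebraic, $g\in\overline{\mathbb{K}}(z)$.

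I would then deduce separability via MacLane's criterion (see \cite[Appendix~A1.2]{Eisenbud}): the finitely generated extension $\mathcal{E}/\overline{\mathbb{K}}(z)$ admits a separating transcendence basis if and only if $\mathcal{E}$ and $\overline{\mathbb{K}}(z)^{1/p}$ are linearly disjoint over $\overline{\mathbb{K}}(z)$. Since $\overline{\mathbb{K}}$ is algebraically closed, hence perfect, one has $\overline{\mathbb{K}}(z)^{1/p}=\overline{\mathbb{K}}(z^{1/p})$ with $[\overline{\mathbb{K}}(z^{1/p}):\overline{\mathbb{K}}(z)]=p$, so linear disjointness amounts to $[\mathcal{E}(z^{1/p}):\mathcal{E}]=p$, i.e.\ $z^{1/p}\notin\mathcal{E}$. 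But $z^{1/p}$ is algebraic over $\overline{\mathbb{K}}(z)$ and does not lie in $\overline{\mathbb{K}}(z)$, so the first step rules out $z^{1/p}\in\mathcal{E}$, and separability follows.

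The main obstacle lies in the first step, namely ensuring that the algebraic $d$-Mahler element $g$ can actually be fed into Theorem~\ref{l_mahler_bis}: $g$ is a priori only a rational expression in the $f_{i}(z)$ whose denominator could vanish at the origin, so a mild normalization (removing a pole by multiplying by a power of $z$, and tracking the resulting Mahler equation for $z^{k}g$) is required before invoking that theorem. Once that is handled, the remainder of the argument is purely structural, exploiting $\sigma_{d}$-stability of $\mathcal{E}$ and the standard MacLane criterion.
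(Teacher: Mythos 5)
Your proof is correct and, for the second defining condition of regularity, follows the paper's own route: you show the algebraic closure $\mathcal{A}$ of $\overline{\mathbb{K}}(z)$ in $\mathcal{E}$ is finite and $\sigma_{d}$-stable, conclude that its elements are $d$-Mahler, and invoke Theorem~\ref{l_mahler_bis}; you are in fact more explicit than the paper about the normalization needed there (clearing a possible pole at the origin and descending the coefficients to a function field $\mathbb{L}$). One small caveat: a $d$-Mahler extension is generated by functions that are only \emph{individually} $d$-Mahler, so $\mathcal{E}$ itself need not be $\sigma_{d}$-stable; the paper first enlarges $(f_{1},\ldots,f_{n})$ to a genuine solution vector of a system and passes regularity back down to the subextension via \cite[Corollary A1.6]{Eisenbud}, and your argument needs the same preliminary enlargement (harmless, since condition (2) is trivially inherited by subextensions). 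Where you genuinely diverge is separability. The paper quotes Proposition~\ref{tjrs_sep}, a general statement proved with Cartier operators and Theorem~\ref{thm_sep_fonda}, asserting that \emph{any} extension generated by functions analytic at the origin is separable, with no Mahler hypothesis and independently of condition (2). You instead derive separability \emph{from} condition (2): since $\overline{\mathbb{K}}$ is perfect, $\overline{\mathbb{K}}(z)^{1/p}=\overline{\mathbb{K}}(z^{1/p})$ is the simple degree-$p$ extension generated by $z^{1/p}$, an element algebraic over $\overline{\mathbb{K}}(z)$ but not in it, hence excluded from $\mathcal{E}$ by condition (2); therefore $[\mathcal{E}(z^{1/p}):\mathcal{E}]=p$, the two fields are linearly disjoint over $\overline{\mathbb{K}}(z)$, and MacLane's criterion gives separability. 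This is valid precisely because the base field has imperfection degree one, so that $k^{1/p}$ is monogenic and ``no new algebraic elements'' already forces linear disjointness; it makes the corollary self-contained, whereas the paper's Proposition~\ref{tjrs_sep} is a stand-alone tool it also needs elsewhere (in the proof of Theorem~\ref{thm_faux}). Both routes establish the corollary; the paper's is the more general instrument, yours the shorter path for this particular statement.
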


The present paper is organised as follows. Section \ref{sec_phil} is dedicated to the proof of Theorem \ref{phil_p}. We follow the same approach as P. Philippon \cite{Phil1} and B. Adamczewski and C. Faverjon \cite{A-F}. In Section \ref{faux} we prove Theorem \ref{thm_faux}. Section \ref{sec_courbe} is devoted to the proof of Theorem \ref{l_mahler_bis} and Corollary \ref{l_mahler}. We follow an approach of J. Roques \cite{JR} dealing with the theory of smooth projective curves in $\mathbb{P}^{1}(C)$, and an argument from B. Adamczewski and C. Faverjon \cite{A-F}. In Section \ref{cor}, we prove Corollary \ref{cor_important}. Finally, in Section \ref{Examples} we give an application of Theorem \ref{phil_p} and provide, in the case where $p\mid d$, examples of regular and non-regular $d$-Mahler extensions.

\section{Proof of Theorem \ref{phil_p}}
\label{sec_phil}

Before going through the proof of Theorem \ref{phil_p}, let us introduce some definitions and recall some results. Let $L$ be a field. If $\mathfrak{q}$ is a prime ideal of $L[X_{1},\ldots, X_{n}]$, we say that $\mathfrak{q}$ is absolutely prime over $L$ if for all extension $L_{1}$ of $L$, the extended ideal $\mathfrak{q}L_{1}[X_{1},\ldots, X_{n}]$ is still prime in $L_{1}[X_{1},\ldots, X_{n}]$. We recall that $\mathfrak{q}$ is prime (resp. absolutely prime) in $L[X_{1},\ldots, X_{n}]$ if and only if its homogenized ideal $\tilde{\mathfrak{q}}$ is prime (resp. absolutely prime) in $L[X_{0},\ldots, X_{n}]$. Furthermore, when both are prime, they have the same height. Finally, given functions $f_{1}(z), \ldots, f_{n}(z) \in \mathbb{K}\{z\}$, we recall that the extension $\overline{\mathbb{K}}(z)\left(f_{1}(z),\ldots, f_{n}(z)\right)$ is regular over $\overline{\mathbb{K}}(z)$ if and only if the ideal $\mathfrak{p}$ is absolutely prime in $\overline{\mathbb{K}}(z)\left[X_{1},\ldots, X_{n}\right]$ \cite[VII, Theorem 39]{Sam_Zar_2}.

\subsection{A local version of Theorem \ref{phil_p}}
\label{step1}
In this subsection, we establish an analogue of a result of P. Philippon \cite[Prop. 4.4]{Phil1} in the framework of function fields. This is Corollary \ref{phil_local} below. We deduce this statement from the more general result stated in Proposition \ref{phil_local_ens}, in the vein of \cite[Proposition 3.1]{A-F}.

%The approach of Philippon has been introduced by Nesterenko and Shidlovskii in \cite{Nest_shid} in order to prove a first result toward an analogue of Theorem \ref{phil_local} for E-functions. 

\begin{prop}
\label{phil_local_ens}
Let $\bm{k}$ be a valued field and let $f_{1}(z),\ldots, f_{n}(z)\in\bm{k}\{z\}$ be analytic functions on a domain $\mathcal{U}\subseteq\tilde{\bm{k}}$ which contains the origin, over $\tilde{\bm{k}}$. Let us assume that the two following properties are satisfied.
\begin{enumerate}
\item
There exists a set $S\subseteq\mathcal{U}\cap\overline{\bm{k}}$ such that, for all $\alpha\in S$, we have
\begin{equation}
\label{nish_p_local}
\text{trdeg}_{\overline{\bm{k}}}\{f_{1}(\alpha), \ldots, f_{n}(\alpha)\}=\text{trdeg}_{\overline{\bm{k}}(z)}\{f_{1}(z), \ldots, f_{n}(z)\}.
\end{equation} 
\item
The extension $\overline{\bm{k}}(z)(f_{1}(z), \ldots, f_{n}(z))$ is regular over $\overline{\bm{k}}(z)$. 
\end{enumerate} 

Then, there exists a finite set $S'\subseteq S$ such that for all $\alpha\in S\setminus S'$ and for all polynomial $P(X_{1},\ldots, X_{n})\in\overline{\bm{k}}[X_{1},\ldots,X_{n}]$ of total degree $N$ in $X_{1},\ldots,X_{n}$ such that 
$$P(f_{1}(\alpha),\ldots, f_{n}(\alpha))=0,$$
there exists a polynomial $Q(z,X_{1},\ldots, X_{n})\in \overline{\bm{k}}[z][X_{1},\ldots,X_{n}]$ of total degree $N$ in $X_{1},\ldots,X_{n}$ such that 
$$Q(z,f_{1}(z),\ldots, f_{n}(z))=0,$$
and 
$$Q(\alpha,X_{1},\ldots, X_{n})=P(X_{1},\ldots, X_{n}).$$
\end{prop}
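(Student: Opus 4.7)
The strategy is to recast the statement as an ideal-theoretic equality in $\overline{\bm{k}}[X_0,\ldots,X_n]$ and to establish it via a spreading-out argument for a family of projective varieties parametrized by $z$. Set
$$\mathfrak{p} \;=\; \bigl\{\,Q \in \overline{\bm{k}}(z)[X_1,\ldots,X_n] : Q(z,f_1(z),\ldots,f_n(z)) = 0\,\bigr\},$$
and let $\tilde{\mathfrak{p}} \subset \overline{\bm{k}}(z)[X_0,\ldots,X_n]$ be its homogenization. By hypothesis (2) together with the criterion recalled before the proposition, $\tilde{\mathfrak{p}}$ is absolutely prime. Put $d = \text{trdeg}_{\overline{\bm{k}}(z)}\{f_1(z),\ldots,f_n(z)\}$, so that $\tilde{\mathfrak{p}}$ has height $n-d$, and consider the homogeneous prime ideal $I := \tilde{\mathfrak{p}} \cap \overline{\bm{k}}[z][X_0,\ldots,X_n]$. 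It satisfies $I \cap \overline{\bm{k}}[z]=(0)$, and the generic fiber of $\text{Spec}(\overline{\bm{k}}[z][X_0,\ldots,X_n]/I)\to \mathbb{A}^1_z$ is geometrically integral of relative dimension $d$ over $\overline{\bm{k}}(z)$.

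First I would establish the following \emph{spreading-out lemma}: outside of a finite set $S' \subset \overline{\bm{k}}$, the specialization $\text{ev}_\alpha(I) \subset \overline{\bm{k}}[X_0,\ldots,X_n]$ is again a homogeneous absolutely prime ideal of height $n-d$. This is the analogue in our setting of the classical input of Philippon~\cite{Phil1}: one writes down a Chow form of the geometrically integral generic fiber, clears denominators in $z$, and observes that the specialized Chow form remains absolutely irreducible except at the finitely many common zeroes of finitely many nonzero polynomials in $z$. Equivalently, one can appeal to the openness of the locus of geometrically integral fibers for a finite-type dominant morphism of Noetherian schemes, combined with $\dim \mathbb{A}^1 = 1$ to make ``open'' mean ``cofinite''. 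In either approach it is crucial that the generic fiber is \emph{geometrically} integral, which is exactly the role played by the regularity assumption (2).

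Next, for $\alpha \in S \setminus S'$, I would combine the spreading-out lemma with the transcendence-degree hypothesis \eqref{nish_p_local}. The projective variety $W_\alpha := V(\text{ev}_\alpha(I)) \subset \mathbb{P}^n_{\overline{\bm{k}}}$ is irreducible of dimension $d$ and contains the point $[1:f_1(\alpha):\cdots:f_n(\alpha)]$. By \eqref{nish_p_local}, the Zariski closure of this point in $\mathbb{P}^n$ also has dimension $d$, so it must coincide with $W_\alpha$. Applying the Nullstellensatz over the algebraically closed field $\overline{\bm{k}}$ to the prime ideal $\text{ev}_\alpha(I)$, I obtain
$$\tilde{\mathfrak{p}}_\alpha \;=\; I(W_\alpha) \;=\; \text{ev}_\alpha(I) \;=\; \text{ev}_\alpha\bigl(\tilde{\mathfrak{p}} \cap \overline{\bm{k}}[z][X_0,\ldots,X_n]\bigr).$$

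To deduce the claimed statement about inhomogeneous polynomials with degree preserved, I would observe that the previous equality holds graded piece by graded piece, since both sides are homogeneous in the $X_i$ and the map $z \mapsto \alpha$ is a graded morphism. Given $P(X_1,\ldots,X_n) \in \mathfrak{p}_\alpha$ of total degree exactly $N$, its homogenization $\tilde{P}$ lies in the degree-$N$ component of $\tilde{\mathfrak{p}}_\alpha$, hence lifts to some $\tilde{Q}(z,X_0,\ldots,X_n) \in I$ homogeneous of degree $N$ in the $X_i$ with $\tilde{Q}(\alpha,X_0,\ldots,X_n) = \tilde{P}$. Dehomogenizing by $X_0 \mapsto 1$ yields the required $Q(z,X_1,\ldots,X_n)$; its total degree in the $X_i$ equals $N$ because $P$ already contains a nonzero monomial of degree $N$. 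The main obstacle in this plan is unquestionably the spreading-out lemma: making the Chow-form argument of Philippon rigorous in positive characteristic, and verifying that absolute primeness of the generic fiber (and not merely primeness, which would allow residue-field extensions that behave badly in characteristic $p$) is what survives specialization at cofinitely many $\alpha$.
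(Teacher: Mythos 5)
Your proposal is correct and its skeleton coincides with the paper's: both reduce the statement to the ideal-theoretic equality $\text{ev}_{\alpha}(\tilde{\mathfrak{p}}\cap\overline{\bm{k}}[z][X_{0},\ldots,X_{n}])=\tilde{\mathfrak{p}}_{\alpha}$, both use the regularity hypothesis to make $\tilde{\mathfrak{p}}$ absolutely prime, and both use hypothesis (1) to pin down the height of $\tilde{\mathfrak{p}}_{\alpha}$. Where you genuinely diverge is in the proof of the specialization lemma. The paper splits it in two: primeness (indeed absolute primeness) of $\text{ev}_{\alpha}(\tilde{\mathfrak{p}}\cap\overline{\bm{k}}[z][X_{0},\ldots,X_{n}])$ away from a finite set is quoted from Krull, while the height lower bound is obtained by a hands-on comparison of Hilbert functions, $\phi(N)\leq\psi(N)$, built on an $\alpha$-basis of $\mathcal{L}_{\tilde{\mathfrak{p}}}(N)$ made of monomials with coefficients in the local ring $R_{\alpha}$ (following Nesterenko--Shidlovskii) together with Hilbert's theorem relating the degree of the Hilbert polynomial to the height. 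You instead obtain primeness and dimension in one stroke from general spreading-out (openness of the locus of geometrically integral fibres plus semicontinuity of fibre dimension over $\mathbb{A}^{1}$, or Philippon's Chow-form formulation); these statements are characteristic-free, so the route is legitimate and arguably cleaner, at the price of heavier machinery where the paper stays at the level of graded vector spaces. Your closing identification ($W_{\alpha}$ irreducible of dimension $d$ containing a point whose Zariski closure also has dimension $d$, hence equal, then the Nullstellensatz applied to a prime ideal) is a geometric repackaging of the paper's implicit step that a prime contained in a prime of the same height equals it, and your graded-piece argument for preserving the degree $N$ matches the paper's bookkeeping. You also correctly isolate that it is \emph{absolute} primeness of the generic fibre, supplied by the regularity hypothesis, that must survive specialization; that is exactly the role this hypothesis plays in the paper.
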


\begin{cor}
	\label{phil_local}
Let $f_{1}(z), \ldots, f_{n}(z)\in\mathbb{K}\{z\}$ be functions satisfying \eqref{syst_gen} and such that the extension\\  $\overline{\mathbb{K}}(z)(f_{1}(z), \ldots, f_{n}(z))$ is regular over $\overline{\mathbb{K}}(z)$. Then, there exists $0<\rho<1$ such that for all $\alpha\in\overline{\mathbb{K}}$, $0<|\alpha|<\rho$, and for all polynomial $P(X_{1},\ldots, X_{n})\in\overline{\mathbb{K}}[X_{1},\ldots,X_{n}]$ of total degree $N$ in $X_{1},\ldots,X_{n}$ such that 
$$P(f_{1}(\alpha),\ldots, f_{n}(\alpha))=0,$$
there exists a polynomial $Q(z,X_{1},\ldots, X_{n})\in \overline{\mathbb{K}}[z][X_{1},\ldots,X_{n}]$ of total degree $N$ in $X_{1},\ldots,X_{n}$ such that 
$$Q(z,f_{1}(z),\ldots, f_{n}(z))=0,$$
and 
$$Q(\alpha,X_{1},\ldots, X_{n})=P(X_{1},\ldots, X_{n}).$$ 	
	
\end{cor}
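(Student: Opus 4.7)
The plan is to derive Corollary \ref{phil_local} from Proposition \ref{phil_local_ens} by selecting a suitable set $S$ and then shrinking $\rho$ so that the entire punctured disc $\{0<|\alpha|<\rho\}\cap\overline{\mathbb{K}}$ lies in $S$ and avoids the finite exceptional set $S'$ produced by the proposition. Since the paper works in the non-archimedean setting of $C=\tilde{\mathbb{K}}$, I would take $\bm{k}=\mathbb{K}$ and let $\mathcal{U}\subseteq C$ be an open disc around the origin contained in the intersection of the discs of convergence of $f_{1}(z),\ldots,f_{n}(z)$.

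For the first hypothesis of Proposition \ref{phil_local_ens}, I would define $S$ to be the set of $\alpha\in\mathcal{U}\cap\overline{\mathbb{K}}$ with $0<|\alpha|<1$ that are regular with respect to the Mahler system \eqref{syst_gen}. For every such $\alpha$, Theorem \ref{nish_gen} gives precisely the required transcendence-degree equality \eqref{nish_p_local}. The second hypothesis (regularity of the field extension over $\overline{\mathbb{K}}(z)$) is exactly the standing assumption of the corollary. Proposition \ref{phil_local_ens} then yields a finite subset $S'\subseteq S$ such that the degree-preserving homogeneous lifting property holds for every $\alpha\in S\setminus S'$.

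It remains to choose $\rho$. Three requirements must be met: (i) $\rho$ is smaller than the common radius of convergence, so that $\alpha\in\mathcal{U}$; (ii) $\rho<\min\{|\beta|:\beta\in S',\ \beta\neq 0\}$, so that no point of $S'$ lies in the punctured disc; (iii) every $\alpha$ with $0<|\alpha|<\rho$ is regular for \eqref{syst_gen}. The anticipated main obstacle is (iii). The non-regular points are the $d^{k}$-th roots, for $k\geq 0$, of the finitely many poles $\gamma_{1},\ldots,\gamma_{M}$ of $A(z)$ and $A^{-1}(z)$. Because $|\gamma_{j}|^{1/d^{k}}\to 1$ as $k\to\infty$ in the non-archimedean absolute value, for any fixed $\rho<1$ only finitely many such roots satisfy $|\gamma_{j}|^{1/d^{k}}<\rho$; in particular, non-regular algebraic points of absolute value less than $\rho$ form a finite set, and we may shrink $\rho$ further to discard them all. (If $0$ is a pole of $A(z)$ or $A^{-1}(z)$, one must use analyticity of the $f_{i}(z)$ at $0$ to see this does not obstruct the argument; in the framework inherited from Theorem \ref{nish_gen} the system is set up so that this issue does not arise.)

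With such a $\rho$, the disc $\{0<|\alpha|<\rho\}\cap\overline{\mathbb{K}}$ is contained in $S\setminus S'$, and the degree-$N$ lifting provided by Proposition \ref{phil_local_ens} transfers verbatim to give the statement of Corollary \ref{phil_local}. No additional work is required on the degree-preservation, since the proposition already lifts a polynomial of total degree $N$ to one of the same total degree in $X_{1},\ldots,X_{n}$.
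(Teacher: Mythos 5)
Your proposal is correct and follows essentially the same route as the paper: take $S$ to be the set of algebraic points in a small punctured disc on which every point is regular for the system (possible since $A(z)$ and $A^{-1}(z)$ have finitely many poles, whose $d^{k}$-th roots have absolute value bounded below), invoke Theorem \ref{nish_gen} for hypothesis~1 of Proposition \ref{phil_local_ens}, and then shrink $\rho$ below the absolute values of the finitely many points of $S'$. The only difference is that you spell out the final shrinking step, which the paper leaves implicit.
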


\begin{proof}[Proof of Proposition \ref{phil_local_ens}]

As noticed in the introduction of this paper, the conclusion of Proposition \ref{phil_local_ens} is equivalent to the following
$$\text{ev}_{\alpha}(\tilde{\mathfrak{p}}\cap \overline{\bm{k}}[z][X_{0},\ldots, X_{n}])=\tilde{\mathfrak{p}}_{\alpha},$$
for all but finitely many $\alpha\in S$.

 Thus, proving Proposition \ref{phil_local_ens} is the same as proving that $\text{ev}_{\alpha}(\tilde{\mathfrak{p}}\cap\overline{\bm{k}}[z][X_{0},\ldots, X_{n}])$ is a prime ideal of same height as $\tilde{\mathfrak{p}}_{\alpha}$, for all but finitely many $\alpha\in S$. To do so, we notice that the ring $\overline{\bm{k}}(z)[f_{1}(z), \ldots, f_{n}(z)]$ is an integral (because $\mathcal{U}$ is a domain) finitely generated $\overline{\bm{k}}(z)$-algebra. Hence applying results from commutative algebra (which only rely on these two properties and hold true over any base field, see for example \cite{Eisenbud}), we get

\begin{align*}
\text{trdeg}_{\overline{\bm{k}}(z)}\{f_{1}(z), \ldots, f_{n}(z)\} &=\text{dim}\left(\overline{\bm{k}}(z)[f_{1}(z), \ldots, f_{n}(z)]\right)\\
&=\text{dim}\left(\overline{\bm{k}}(z)[X_{1}, \ldots, X_{n}]/\mathfrak{p}\right)\\
&=\text{dim}\left(\overline{\bm{k}}(z)[X_{1}, \ldots, X_{n}]\right)-\text{ht}(\mathfrak{p})\\
&=\text{dim}\left(\overline{\bm{k}}(z)[X_{0}, \ldots, X_{n}]\right)-\text{ht}(\tilde{\mathfrak{p}})-1.
\end{align*}

Let $\alpha\in S$. As $\overline{\bm{k}}[f_{1}(\alpha), \ldots, f_{n}(\alpha)]$ is an integral finitely generated $\overline{\bm{k}}$-algebra, we obtain in the same way that

$$\text{trdeg}_{\overline{\bm{k}}}\{f_{1}(\alpha), \ldots, f_{n}(\alpha)\}=\text{dim}\left(\overline{\bm{k}}[X_{0}, \ldots, X_{n}]\right)-\text{ht}(\tilde{\mathfrak{p}}_{\alpha})-1.$$

By assumption, we get
\begin{equation}
\label{eg_ht}
\text{ht}(\tilde{\mathfrak{p}})=\text{ht}(\tilde{\mathfrak{p}}_{\alpha}).
\end{equation}

Thus, proving Proposition \ref{phil_local_ens} is now equivalent to prove that $\text{ev}_{\alpha}(\tilde{\mathfrak{p}}\cap \overline{\bm{k}}[z][X_{0},\ldots, X_{n}])$ is a prime ideal of same height as $\tilde{\mathfrak{p}}$, for all but finitely many $\alpha\in S$. First, as, by assumption, the extension $\overline{\bm{k}}(z)(f_{1}(z), \ldots, f_{n}(z))$ is regular over $\overline{\bm{k}}(z)$, the ideal $\mathfrak{p}$ is absolutely prime over $\overline{\bm{k}}(z)[X_{1}, \ldots, X_{n}]$ \cite[VII, Theorem 39]{Sam_Zar_2}. Therefore, as recalled earlier, $\tilde{\mathfrak{p}}$ is absolutely prime over $\overline{\bm{k}}(z)[X_{0}, \ldots, X_{n}]$. Now, a result from W. Krull \cite{KrullII}, which holds for any base field, leads to the existence of a finite set $S'\subseteq S$ such that for all $\alpha\in S\setminus S'$, the ideal $\text{ev}_{\alpha
}\left(\tilde{\mathfrak{p}}\cap \overline{\bm{k}}[z][X_{0}, \ldots, X_{n}]\right)$ is absolutely prime over $\overline{\bm{k}}[X_{0}, \ldots, X_{n}]$. In particular, it is a prime ideal. Finally, let $\alpha\in S\setminus S'$. To prove that

\begin{equation}
\label{fin_en_sachant_premier}
\text{ht}\left(\text{ev}_{\alpha}\left(\tilde{\mathfrak{p}}\cap \overline{\bm{k}}[z][X_{0},\ldots, X_{n}]\right)\right)=\text{ht}\left(\tilde{\mathfrak{p}}\right),
\end{equation}

\noindent 
we first notice that 
	
\begin{equation}
\text{ev}_{\alpha}(\tilde{\mathfrak{p}}\cap \overline{\bm{k}}[z][X_{0},\ldots, X_{n}])\subseteq\tilde{\mathfrak{p}}_{\alpha}.
\end{equation}

It follows that

\begin{align*}
\text{ht}\left(\text{ev}_{\alpha}\left(\tilde{\mathfrak{p}}\cap \overline{\bm{k}}[z][X_{0},\ldots, X_{n}]\right)\right)&\leq\text{ht}\left(\tilde{\mathfrak{p}}_{\alpha}\right)\\
&=\text{ht}\left(\tilde{\mathfrak{p}}\right), \text{ by }  \eqref{eg_ht}.
\end{align*}

In order to prove the converse inequality, we use a result of D. Hilbert (see for example \cite[VII, Theorem 41, Theorem 42]{Sam_Zar_2}). We give a detail account here because we did not find a reference in print. We reproduce an argument due to C. Faverjon (unpublished). We first introduce the following definitions, according to \cite{Nest_shid}.

\begin{de}
\begin{enumerate}
\item
For every $N\in\mathbb{N}$ and every homogeneous ideal $I$ of $\overline{\bm{k}}[X_{0},\ldots, X_{n}]$, let us set
$$\mathcal{M}_{I}(N)=\text{vect}_{\overline{\bm{k}}}\{[P]_{I}, P\in\overline{\bm{k}}[X_{0},\ldots, X_{n}], \text{ homogeneous of degree } N\},$$
where $[P]_{I}$ stands for the congruence class of $P$ modulo $I$.
\item
For every $N\in\mathbb{N}$ and every homogeneous ideal $J$ of $\overline{\bm{k}}(z)[X_{0},\ldots, X_{n}]$, let us set :
$$\mathcal{L}_{J}(N)=\text{vect}_{\overline{\bm{k}}(z)}\{[Q]_{J}, Q\in\overline{\bm{k}}(z)[X_{0},\ldots, X_{n}], \text{ homogeneous of degree } N\},$$
where $[Q]_{J}$ stands for the congruence class of $Q$ modulo $J$.
\end{enumerate}
\end{de}

Now, let us set
$$dim_{\overline{\bm{k}}}\left(\mathcal{M}_{\text{ev}_{\alpha}\left(\tilde{\mathfrak{p}}\cap \overline{\bm{k}}[z][X_{0},\ldots, X_{n}]\right)}(N)\right)=\phi(N),$$
and :
$$dim_{\overline{\bm{k}}(z)}\left(\mathcal{L}_{\tilde{\mathfrak{p}}}(N)\right)=\psi(N).$$

Then, we recall the following result.

\begin{thm}[D. Hilbert] 
\label{thm_hilbert}
For all integers $N\geq 0$, the quantities $\phi(N)$ and $\psi(N)$ are finite. Moreover, for all $N$ big enough, they are polynomials in $N$, and there exist $a,b>0$ such that
\begin{equation}
\label{asymt_ht}
\begin{cases}
\phi(N)&\sim_{N\rightarrow +\infty} aN^{n-\text{ht}\left(\text{ev}_{\alpha}\left(\tilde{\mathfrak{p}}\cap \overline{\bm{k}}[z][X_{0},\ldots, X_{n}]\right)\right)}\\
\psi(N)&\sim_{N\rightarrow +\infty} bN^{n-\text{ht}\left(\tilde{\mathfrak{p}}\right)}
\end{cases}
\end{equation}
\end{thm}

With this theorem in hands, we only need to prove that
\begin{equation}
\label{a_dem}
\phi(N)\leq \psi(N),
\end{equation}
for $N$ large enough. We now set the following definition. For all $\alpha\in\overline{\bm{k}}$, we denote by $R_{\alpha}$ the localization of the ring $\overline{\bm{k}}[z]$ at the ideal $(z-\alpha)$. In other words, the sub-field of $\overline{\bm{k}}(z)$ consisting of rational fractions without pole at $z=\alpha$. Then, the result \cite[Lemma 3]{Nest_shid} furnishes polynomials $b_{1}(z),\ldots, b_{\psi(N)}(z)\in \overline{\bm{k}}[z][X_{0},\ldots, X_{n}]$ such that $\mathcal{B}=\{[b_{1}(z)]_{\tilde{\mathfrak{p}}},\ldots, [b_{\psi(N)}(z)]_{\tilde{\mathfrak{p}}}\}$ is an $\alpha$-basis of $\mathcal{L}_{\tilde{\mathfrak{p}}}(N)$ over $\overline{\bm{k}}(z)$. That is, the following two properties are satisfied.
\begin{enumerate}
\item[(i)]
$\mathcal{B}$ is a $\overline{\bm{k}}(z)$-basis of $\mathcal{L}_{\tilde{\mathfrak{p}}}(N)$
\item[(ii)]
Every residue modulo $\tilde{\mathfrak{p}}$ of a homogeneous polynomial of degree $N$ in $R_{\alpha}[X_{0},\ldots, X_{n}]$ is a linear combination of $[b_{1}(z)]_{\tilde{\mathfrak{p}}},\ldots, [b_{\psi(N)}(z)]_{\tilde{\mathfrak{p}}}$, with coefficients in $R_{\alpha}$.
\end{enumerate}

This result is used by Y. V. Nesterenko and A. B. Shidlovskii for the field $\mathbb{C}$ instead of $\overline{\bm{k}}$. In our case, we can, as these authors, notice that the finite set of residues modulo $\tilde{\mathfrak{p}}$ of all monomials $$X_{0}^{i_{0}}\ldots X_{n}^{i_{n}}$$ of degree $i_{0}+\cdots+i_{n}= N$ generates $\mathcal{L}_{\tilde{\mathfrak{p}}}(N)$ over $\overline{\bm{k}}(z)$ and satisfies Property (ii). Among all such finite sets which generate $\mathcal{L}_{\tilde{\mathfrak{p}}}(N)$ and satisfy Property (ii), let us consider a set $\mathcal{S}=\{S_{1}(z),\ldots, S_{s}(z)\}$ whose cardinality is minimal. If $\mathcal{S}$ does not satisfy Property (i), there exist coprime polynomials $T_{1}(z),\ldots, T_{s}(z)\in\overline{\bm{k}}[z]$ such that
$$\sum_{l=1}^{s}T_{l}(z)S_{l}(z)=0.$$
Without any loss of generality, we may assume that $T_{s}(\alpha)\neq 0$. It follows that
$$S_{s}(z)=-\sum_{l=1}^{s-1}\frac{T_{l}(z)}{T_{s}(z)}S_{l}(z).$$
This contradicts the minimality of $s$. Thus, \cite[Lemma 3]{Nest_shid} remains true in our framework. 

\begin{rem}
	\label{alpha_base_mod}
	\em{
We see that the proof guarantees that we can choose an $\alpha$-basis of $\mathcal{L}_{\tilde{\mathfrak{p}}}(N)$ over $\overline{\bm{k}}(z)$ among the set of residues modulo $\tilde{\mathfrak{p}}$ of all monic monomials $$X_{0}^{i_{0}}\ldots X_{n}^{i_{n}}$$ of degree $i_{0}+\cdots+i_{n}= N$.}	
\end{rem}

\medskip

Now, we are going to show that the family

$$\left\{[\text{ev}_{\alpha}(b_{i}(z))]_{\text{ev}_{\alpha}\left(\tilde{\mathfrak{p}}\cap \overline{\bm{k}}[z][X_{0},\ldots, X_{n}]\right)}\right\}_{1\leq i\leq\psi(N)}$$
generates $\mathcal{M}_{\text{ev}_{\alpha}\left(\tilde{\mathfrak{p}}\cap \overline{\bm{k}}[z][X_{0},\ldots, X_{n}]\right)}(N)$ over $\overline{\bm{k}}$. Let $P(X_{0},\ldots,X_{n})\in\overline{\bm{k}}[X_{0},\ldots, X_{n}]$ be a homogeneous polynomial of degree $N$. As $\overline{\bm{k}}\subseteq R_{\alpha}$, there exist elements $r_{1},\ldots,r_{\psi(N)}\in R_{\alpha}$ such that
\begin{equation}
\label{base}
P(X_{0},\ldots,X_{n})-\sum_{i=1}^{\psi(N)}r_{i}b_{i}(z)\in\tilde{\mathfrak{p}}.
\end{equation}

Observe that
$$P(X_{0},\ldots,X_{n})-\sum_{i=1}^{\psi(N)}r_{i}b_{i}(z)\in\tilde{\mathfrak{p}}\cap R_{\alpha}[X_{0},\ldots, X_{n}].$$

Then, let us apply $\text{ev}_{\alpha}(.)$ to \eqref{base}. We get
\begin{align*}
P(X_{0},\ldots,X_{n})-\sum_{i=1}^{\psi(N)}\text{ev}_{\alpha}(r_{i})\text{ev}_{\alpha}(b_{i}(z))&\in\text{ev}_{\alpha}\left(\tilde{\mathfrak{p}}\cap R_{\alpha}[X_{0},\ldots, X_{n}]\right)\\
&=\text{ev}_{\alpha}\left(\tilde{\mathfrak{p}}\cap \overline{\bm{k}}[z][X_{0},\ldots, X_{n}]\right).
\end{align*}

Therefore the family $$\left\{[\text{ev}_{\alpha}(b_{i}(z))]_{\text{ev}_{\alpha}\left(\tilde{\mathfrak{p}}\cap \overline{\bm{k}}[z][X_{0},\ldots, X_{n}]\right)}\right\}_{1\leq i\leq\psi(N)}$$
generates $\mathcal{M}_{\text{ev}_{\alpha}\left(\tilde{\mathfrak{p}}\cap \overline{\bm{k}}[z][X_{0},\ldots, X_{n}]\right)}(N)$ over $\overline{\bm{k}}$. Hence, we obtain \eqref{a_dem} and Proposition \ref{phil_local_ens} is proved.

\end{proof}

We now deduce Corollary \ref{phil_local}.

\begin{proof}[Proof of Corollary \ref{phil_local}]

First, the matrices $A(z)$ and $A^{-1}(z)$ only have finitely many poles. Then, there exists $0<\rho_{0}<1$ such that for all $\alpha\in\overline{\mathbb{K}}$, $0<|\alpha|<\rho_{0}$, $\alpha$ is regular with respect to System \eqref{syst_gen}, and all the $f_{i}(z)$'s are analytic at $\alpha$. Now, in Proposition \ref{phil_local_ens}, take $S$ to be the set of all $\alpha\in\overline{\mathbb{K}}$, such that $0<|\alpha|<\rho_{0}$. Assumption 1 of Proposition \ref{phil_local_ens} is guaranteed by Theorem \ref{nish_gen}, Assumption 2 is satisfied, and Corollary \ref{phil_local} follows from Proposition \ref{phil_local_ens}.	
\end{proof}

\subsection{Proof of the inhomogeneous counterpart of Theorem \ref{phil_p}}
\label{step2}

In this section, we use the same approach as P. Philippon \cite{Phil1} to obtain the following inhomogeneous counterpart of Theorem \ref{phil_p} from Corollary \ref{phil_local}. 

\begin{prop}
	\label{phil_p_inhom}
We continue with the assumptions of Theorem \ref{nish_gen}. Let us assume further that the extension \\$\overline{\mathbb{K}}(z)(f_{1}(z), \ldots, f_{n}(z))$ is regular over $\overline{\mathbb{K}}(z)$. 

Then, for all polynomial $P(X_{1},\ldots, X_{n})\in\overline{\mathbb{K}}[X_{1},\ldots,X_{n}]$ of total degree $N$ in $X_{1},\ldots,X_{n}$ such that 
$$P(f_{1}(\alpha),\ldots, f_{n}(\alpha))=0,$$
there exists a polynomial $Q(z,X_{1},\ldots, X_{n})\in \overline{\mathbb{K}}[z][X_{1},\ldots,X_{n}]$ of total degree $N$ in $X_{1},\ldots,X_{n}$ such that 
$$Q(z,f_{1}(z),\ldots, f_{n}(z))=0,$$
and 
$$Q(\alpha,X_{1},\ldots, X_{n})=P(X_{1},\ldots, X_{n}).$$	
	
\end{prop}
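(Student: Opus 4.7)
The plan is to deduce Proposition~\ref{phil_p_inhom} from the local statement Corollary~\ref{phil_local} by exploiting the Mahler transformation $z \mapsto z^d$ to drive $\alpha$ into the small disc where the local statement applies. Writing $F(z) = (f_{1}(z), \ldots, f_{n}(z))^{T}$ and iterating System~\eqref{syst_gen}, one has $F(z^{d^{k}}) = A_{k}(z) F(z)$ with $A_{k}(z) = A(z^{d^{k-1}}) \cdots A(z) \in \text{GL}_{n}(\overline{\mathbb{K}}(z))$; since $\alpha$ is regular with respect to the system, $A_{k}(\alpha) \in \text{GL}_{n}(\overline{\mathbb{K}})$ is well defined and invertible for every $k \geq 0$. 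As $|\alpha| < 1$, we have $|\alpha^{d^{k}}| \to 0$, so I would fix $k$ large enough that $|\alpha^{d^{k}}| < \rho$, where $\rho$ is the constant furnished by Corollary~\ref{phil_local}.

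The next step is to transport the given relation from $\alpha$ to $\alpha^{d^{k}}$, apply the local statement there, and then transport the resulting identity back. Set
$$\tilde{P}(X_{1}, \ldots, X_{n}) := P\bigl(A_{k}(\alpha)^{-1} (X_{1}, \ldots, X_{n})^{T}\bigr) \in \overline{\mathbb{K}}[X_{1}, \ldots, X_{n}],$$
understood as the substitution of each $X_{i}$ by the $i$-th entry of the vector $A_{k}(\alpha)^{-1}(X_{1}, \ldots, X_{n})^{T}$. Being an invertible linear change of variables, this preserves the total degree, so $\tilde{P}$ still has degree $N$; moreover $\tilde{P}(F(\alpha^{d^{k}})) = P(A_{k}(\alpha)^{-1} F(\alpha^{d^{k}})) = P(F(\alpha)) = 0$. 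Applying Corollary~\ref{phil_local} at the small point $\alpha^{d^{k}}$ then produces $\tilde{Q}(z, X_{1}, \ldots, X_{n}) \in \overline{\mathbb{K}}[z][X_{1}, \ldots, X_{n}]$ of total degree $N$ in the $X_{i}$'s, satisfying $\tilde{Q}(z, F(z)) = 0$ and $\tilde{Q}(\alpha^{d^{k}}, X) = \tilde{P}(X)$.

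To transport back, write $A_{k}(z) = B_{k}(z)/e_{k}(z)$ with $B_{k}(z)$ a matrix in $\overline{\mathbb{K}}[z]$ and $e_{k}(z) \in \overline{\mathbb{K}}[z]$ a common denominator; regularity of $\alpha$ ensures $e_{k}(\alpha) \neq 0$. Define
$$Q(z, X_{1}, \ldots, X_{n}) := \frac{e_{k}(z)^{N}}{e_{k}(\alpha)^{N}} \, \tilde{Q}\bigl(z^{d^{k}}, A_{k}(z) (X_{1}, \ldots, X_{n})^{T}\bigr).$$
The factor $e_{k}(z)^{N}$ clears the denominators introduced by substituting linear forms with coefficients in $\overline{\mathbb{K}}(z)$ into the degree-$N$ polynomial $\tilde{Q}$, so $Q \in \overline{\mathbb{K}}[z][X_{1}, \ldots, X_{n}]$ of total degree at most $N$ in the $X_{i}$'s. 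A direct check yields $Q(z, F(z)) = (e_{k}(z)/e_{k}(\alpha))^{N} \tilde{Q}(z^{d^{k}}, A_{k}(z)F(z)) = (e_{k}(z)/e_{k}(\alpha))^{N} \tilde{Q}(z^{d^{k}}, F(z^{d^{k}})) = 0$, and $Q(\alpha, X) = \tilde{P}(A_{k}(\alpha) X) = P(X)$, which incidentally forces the degree in the $X_{i}$'s to be exactly $N$. There is no serious obstacle; the only delicate point is the normalization by $e_{k}(\alpha)^{-N}$, required so that $Q$ specializes at $\alpha$ to $P$ itself and not merely to a nonzero scalar multiple. This mirrors the Philippon--Adamczewski--Faverjon strategy used in the number field setting.
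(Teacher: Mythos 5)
Your proposal is correct and follows essentially the same route as the paper: transport the relation to $\alpha^{d^{k}}$ via the (inverse of the) iterated system matrix evaluated at $\alpha$, apply Corollary~\ref{phil_local} there, then pull the resulting functional relation back with the matrix as a function of $z$, clearing denominators and normalizing so that specialization at $\alpha$ recovers $P$ exactly. The paper phrases this with $B(z)=A^{-1}(z)\cdots A^{-1}(z^{d^{r-1}})$ rather than your $A_{k}(z)$, but the two arguments are identical in substance.
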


\begin{proof}[Proof of Proposition \ref{phil_p_inhom}]
Let us keep the assumptions of Proposition \ref{phil_p_inhom}. Let $P(X_{1},\ldots, X_{n})\in\overline{\mathbb{K}}[X_{1},\ldots,X_{n}]$ be a polynomial of total degree $N$ in $X_{1},\ldots,X_{n}$ such that 
\begin{equation}
\label{rel_nombre}
P(f_{1}(\alpha),\ldots, f_{n}(\alpha))=0.
\end{equation}

Let us consider $\rho$ from Corollary \ref{phil_local} and $r\in\mathbb{N}$ such that $0<|\alpha^{d^{r}}|<\rho$. We can derive from $d$-Mahler System \eqref{syst_gen} the following equality.

\begin{equation}
\label{lien_mat}
\begin{pmatrix}
f_{1}(z) \\ \vdots \\ f_{n}(z) 
\end{pmatrix} = B(z)\begin{pmatrix}
f_{1}(z^{d^{r}}) \\ \vdots \\ f_{n}(z^{d^{r}})
\end{pmatrix},
\end{equation}
where
$$B(z)=A^{-1}(z)A^{-1}(z^{d})\cdots A^{-1}(z^{d^{r-1}}).$$

As $\alpha$ is regular for System \eqref{syst_gen}, it is neither a pole of $B(z)$ nor a pole of $B^{-1}(z)$. Then, let us set $z=\alpha$ in \eqref{lien_mat}. We obtain

\begin{equation}
\begin{pmatrix}
f_{1}(\alpha) \\ \vdots \\ f_{n}(\alpha) 
\end{pmatrix} =B(\alpha)\begin{pmatrix}
f_{1}(\alpha^{d^{r}}) \\ \vdots \\ f_{n}(\alpha^{d^{r}})
\end{pmatrix}.
\end{equation}
Now, let us set

$$Q(X_{1},\ldots, X_{n})=P(\langle B_{1}(\alpha),\overline{X}\rangle,\ldots, \langle B_{n}(\alpha),\overline{X}\rangle),$$
where, for all $i\in\{1,\ldots,n\}$, $B_{i}(z)$ denotes the $i$-th row of the matrix $B(z)$, $\overline{X}=\begin{pmatrix}
X_{1}\\ \vdots \\ X_{n}
\end{pmatrix}$, and $\langle.,.\rangle$ refers to the classical scalar product on $\overline{\mathbb{K}}\{z\}^{n}$. We get
$$Q\left(f_{1}\left(\alpha^{d^{r}}\right),\ldots, f_{n}\left(\alpha^{d^{r}}\right)\right)=P(f_{1}(\alpha),\ldots, f_{n}(\alpha))=0.$$

As $B(\alpha)$ is invertible, $\deg_{X}(Q)=\deg_{X}(P)=N$. We now apply Corollary \ref{phil_local} to $Q$ and $\alpha^{d^{r}}$. There exists a polynomial $R(z,X_{1},\ldots,X_{n})\in\overline{\mathbb{K}}[z][X_{1},\ldots,X_{n}]$ of degree $N$ in $X_{1},\ldots,X_{n}$ such that :
$$R(z,f_{1}(z),\ldots, f_{n}(z))=0,$$
and 
$$R(\alpha^{d^{r}},X_{1},\ldots, X_{n})=Q(X_{1},\ldots, X_{n}).$$

It follows that
$$R(z^{d^{r}},f_{1}(z^{d^{r}}),\ldots, f_{n}(z^{d^{r}}))=0.$$

Now, let us write $B_{i}^{-1}(z), i=1,\ldots,n$, to denote the $i$-th row of the matrix $B^{-1}(z)$. Let $b(z)\in\mathbb{K}[z]$ be a polynomial such that for all $i\in\{1,\ldots, n\}$, $b(z)B_{i}^{-1}(z)\in \mathbb{K}[z]^{n}$ and for all $k\in\mathbb{N}$, $\alpha^{d^{k}}$ is not a zero of $b(z)$ (which is possible because for all $k\in\mathbb{N}$, $\alpha^{d^{k}}$ is not a pole of $A(z)$). Let us set
$$S(z,X_{1},\ldots, X_{n})=R\left(z^{d^{r}},\langle B_{1}^{-1}(z),\overline{X}\rangle,\ldots, \langle B_{n}^{-1}(z),\overline{X}\rangle\right)\left(\frac{b(z)}{b(\alpha)}\right)^{N}.$$
By construction, we have
$$S(z,X_{1},\ldots, X_{n})\in\overline{\mathbb{K}}[z][X_{1},\ldots, X_{n}].$$
As $B^{-1}(z)$ is invertible, $\deg_{X}(S)=\deg_{X}(R)=N$.
Besides, we obtain
$$S\left(z,f_{1}(z),\ldots, f_{n}(z)\right)=R\left(z^{d^{r}},f_{1}\left(z^{d^{r}}\right),\ldots, f_{n}\left(z^{d^{r}}\right)\right)\left(\frac{b(z)}{b(\alpha)}\right)^{N}=0.$$

Finally, as $\alpha$ is regular, we get
\begin{align*}
S\left(\alpha,X_{1},\ldots X_{n}\right)&=R\left(\alpha^{d^{r}},\langle B_{1}^{-1}(\alpha),\overline{X}\rangle,\ldots, \langle B_{n}^{-1}(\alpha),\overline{X}\rangle\right)\\
&=Q\left(\langle B_{1}^{-1}(\alpha),\overline{X}\rangle,\ldots, \langle B_{n}^{-1}(\alpha),\overline{X}\rangle\right)\\
&=P\left(\left\langle B_{1}(\alpha),\begin{pmatrix}
\langle B_{1}^{-1}(\alpha),\overline{X}\rangle\\ \vdots \\ \langle B_{n}^{-1}(\alpha),\overline{X}\rangle\end{pmatrix}\right\rangle,\ldots, \left\langle B_{n}(\alpha),\begin{pmatrix}
\langle B_{1}^{-1}(\alpha),\overline{X}\rangle \\ \vdots \\ \langle B_{n}^{-1}(\alpha),\overline{X}\rangle\end{pmatrix}\right\rangle\right)\\ 
&=P\left(X_{1},\ldots, X_{n}\right).
\end{align*}

Thus, we found a polynomial $S(z,X_{1},\ldots, X_{n})\in\overline{\mathbb{K}}[z][X_{1},\ldots,X_{n}]$ of degree $N$ in $X_{1},\ldots, X_{n}$ such that :
$$S(z,f_{1}(z),\ldots, f_{n}(z))=0,$$
and
\begin{equation}
S(\alpha,X_{1},\ldots, X_{n})=P(X_{1},\ldots, X_{n}).
\end{equation} 

The inhomogeneous counterpart of Theorem \ref{phil_p} is proved.
\end{proof}

\subsection{End of the proof of Theorem \ref{phil_p}}
\label{hom}

The first part of the proof of Theorem \ref{phil_p} consists in showing the following analogue of \cite[Théorème 4.1]{A-F}.

\begin{thm}
	\label{thm_phil_lin}
	Under the assumptions of Theorem \ref{phil_p}, we have
	\begin{equation}
	\text{Rel}_{\overline{\mathbb{K}}}(f_{1}(\alpha),\ldots,f_{n}(\alpha))=\text{ev}_{\alpha}(\text{Rel}_{\overline{\mathbb{K}}(z)}(f_{1}(z),\ldots,f_{n}(z))),
	\end{equation}
	where for a field $L$ and elements $u_{1},\ldots,u_{n}$ of a $L$-vector space,
	$\text{Rel}_{L}(u_{1},\ldots,u_{n})$ denotes the set of linear relations over $L$ between the $u_{i}$'s.
\end{thm}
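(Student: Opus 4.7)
The plan is to reduce to Proposition \ref{phil_p_inhom} and then use the Mahler system to homogenize the lift produced there. The inclusion $\text{ev}_{\alpha}(\text{Rel}_{\overline{\mathbb{K}}(z)}(f_{1}(z),\ldots,f_{n}(z))) \subseteq \text{Rel}_{\overline{\mathbb{K}}}(f_{1}(\alpha),\ldots,f_{n}(\alpha))$ is immediate. For the reverse inclusion, let $P(X_{1},\ldots,X_{n})=\sum_{i=1}^{n}c_{i}X_{i}$ with $c_{i}\in\overline{\mathbb{K}}$ and $P(f_{1}(\alpha),\ldots,f_{n}(\alpha))=0$. Since $P$ has total degree $1$, Proposition \ref{phil_p_inhom} yields a polynomial
\[
Q(z,X_{1},\ldots,X_{n})=c_{0}(z)+\sum_{i=1}^{n}c_{i}(z)X_{i}\in\overline{\mathbb{K}}[z][X_{1},\ldots,X_{n}]
\]
with $Q(z,f_{1}(z),\ldots,f_{n}(z))=0$, $c_{0}(\alpha)=0$, and $c_{i}(\alpha)=c_{i}$ for $1\leq i\leq n$. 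The obstruction is that Proposition \ref{phil_p_inhom} only preserves total degree, so $Q$ is affine linear but need not be homogeneous: the term $c_{0}(z)$ may be nonzero, and thus $Q$ is not yet a lift of $P$ as a linear form.

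To excise this constant, I will use the Mahler relations. Assuming $c_{0}(z)\not\equiv 0$ (otherwise we are already done), since $c_{0}$ has only finitely many zeros and $|\alpha^{d^{k}}|\to 0$ as $k\to\infty$ (because $0<|\alpha|<1$), there exists an integer $r\geq 1$ with $c_{0}(\alpha^{d^{r}})\neq 0$. Iterating System \eqref{syst_gen} provides a matrix $A^{[r]}(z):=A(z^{d^{r-1}})\cdots A(z)\in\text{GL}_{n}(\mathbb{K}(z))$ such that $f_{i}(z^{d^{r}})=\sum_{j}A^{[r]}_{ij}(z)f_{j}(z)$. Substituting $z\mapsto z^{d^{r}}$ in the identity $Q(z,f(z))=0$ and replacing each $f_{i}(z^{d^{r}})$ via this formula yields a second identity
\[
c_{0}(z^{d^{r}})+\sum_{j=1}^{n}d_{j}(z)f_{j}(z)=0,\qquad d_{j}(z):=\sum_{i=1}^{n}c_{i}(z^{d^{r}})A^{[r]}_{ij}(z).
\]
Multiplying the original identity by $c_{0}(z^{d^{r}})$, the new one by $c_{0}(z)$, and subtracting annihilates the constant terms, producing
\[
\sum_{j=1}^{n}\gamma_{j}(z)f_{j}(z)=0,\qquad \gamma_{j}(z):=c_{0}(z^{d^{r}})c_{j}(z)-c_{0}(z)d_{j}(z).
\]

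Evaluating at $z=\alpha$ and using $c_{0}(\alpha)=0$, $c_{i}(\alpha)=c_{i}$ gives $\gamma_{j}(\alpha)=c_{0}(\alpha^{d^{r}})c_{j}$. Since $c_{0}(\alpha^{d^{r}})\in\overline{\mathbb{K}}^{\times}$, rescaling by $\tilde{\gamma}_{j}(z):=\gamma_{j}(z)/c_{0}(\alpha^{d^{r}})\in\overline{\mathbb{K}}(z)$ provides a linear relation $\sum_{j}\tilde{\gamma}_{j}(z)f_{j}(z)=0$ over $\overline{\mathbb{K}}(z)$ whose specialization at $\alpha$ is precisely $P$, as required; note that nontriviality of $P$ forces some $c_{j}\neq 0$, hence $\tilde{\gamma}_{j}\not\equiv 0$ for that index. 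The main subtlety, and the only place where a Mahler-specific input beyond Proposition \ref{phil_p_inhom} is needed, is the existence of an iterate $\alpha^{d^{r}}$ at which $c_{0}$ does not vanish; this is ensured by the contraction $|\alpha^{d^{k}}|\to 0$ combined with the finiteness of the zero set of a polynomial.
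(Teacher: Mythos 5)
Your argument is correct, but it is not the route the paper takes: the paper does not prove Theorem \ref{thm_phil_lin} at all, deferring instead to the induction of Adamczewski--Faverjon on $\dim_{\overline{\mathbb{K}}(z)}\text{Rel}_{\overline{\mathbb{K}}(z)}(f_{1}(z),\ldots,f_{n}(z))$, whereas you derive the statement directly from Proposition \ref{phil_p_inhom} by an elimination of the inhomogeneous term. Your key steps all check out: the points $\alpha^{d^{k}}$ are pairwise distinct because $|\alpha^{d^{k}}|=|\alpha|^{d^{k}}$ is strictly decreasing, so a nonzero $c_{0}\in\overline{\mathbb{K}}[z]$ indeed misses some $\alpha^{d^{r}}$; the identity $f_{i}(z^{d^{r}})=\sum_{j}A^{[r]}_{ij}(z)f_{j}(z)$ is exactly the iteration of \eqref{syst_gen}; and the combination $c_{0}(z^{d^{r}})\cdot(\mathrm{I})-c_{0}(z)\cdot(\mathrm{II})$ cancels the constants and specializes at $\alpha$ to $c_{0}(\alpha^{d^{r}})P$ because $c_{0}(\alpha)=0$. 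One point you should state explicitly: the conclusion $\gamma_{j}(\alpha)=c_{0}(\alpha^{d^{r}})c_{j}$ uses that $d_{j}(z)=\sum_{i}c_{i}(z^{d^{r}})A^{[r]}_{ij}(z)$ has \emph{no pole} at $\alpha$ (otherwise $c_{0}(\alpha)d_{j}(\alpha)$ is not the product $0\cdot d_{j}(\alpha)$); this is guaranteed precisely by the regularity of $\alpha$ with respect to System \eqref{syst_gen}, since no $\alpha^{d^{k}}$ is a pole of $A(z)$, and it is also what makes the resulting relation evaluable at $\alpha$. With that said, your proof is shorter and self-contained: it needs only Proposition \ref{phil_p_inhom}, the regularity of $\alpha$, and the injectivity of $k\mapsto\alpha^{d^{k}}$, and it avoids the induction on the relation module. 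What the induction of \cite{A-F} buys in exchange is structural information about $\text{Rel}_{\overline{\mathbb{K}}(z)}$ as a whole (used there for effectivity, i.e.\ computing a basis of relations), which your pointwise lifting does not provide but which Theorem \ref{thm_phil_lin} does not require.
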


We do not reproduce the proof of Theorem \ref{thm_phil_lin}. It can be proved as in \cite{A-F}, by induction on the dimension of $\text{Rel}_{\overline{\mathbb{K}}(z)}(f_{1}(z),\ldots,f_{n}(z))$. However, we give here more details about how to deduce Theorem \ref{phil_p} from Theorem \ref{thm_phil_lin} (see also \cite{A-F_var}). 
Let $P(X_{1},\ldots, X_{n})\in\overline{\mathbb{K}}[X_{1},\ldots,X_{n}]$ be homogeneous of degree $N$ in $X_{1},\ldots,X_{n}$ such that  
\begin{equation}
\label{rel_alg}
P(f_{1}(\alpha),\ldots, f_{n}(\alpha))=0.
\end{equation}

Let $\mathcal{G}_{N}$ denote the set of all monic monomials of degree $N$ in $f_{1}(z),\ldots, f_{n}(z)$. Then, \eqref{rel_alg} can be seen as a linear relation over $\overline{\mathbb{K}}$ between specializations at $z=\alpha$ of elements of $\mathcal{G}_{N}$. Our aim is to show that the elements of $\mathcal{G}_{N}$ satisfy a $d$-Mahler system for which $\alpha$ is still regular and apply Theorem \ref{thm_phil_lin} to the functions of $\mathcal{G}_{N}$ and $P$.

To do so, in the sequel, we define by induction on $N$, $n$ vectors $M_{N}^{1}(z),\ldots, M_{N}^{n}(z)$ which satisfy the following properties.
\begin{enumerate}
	\item 
	For all $i\in\{1,\ldots,n\}$, $M_{N}^{i}(z)$ is composed of $n^{N-1}$ rows. We write
	$$M_{N}^{i}(z)=\begin{pmatrix}
    L_{N,1}^{i}(z)\\
    \colon\\
    L_{N,n^{N-1}}^{i}(z)
	\end{pmatrix}.$$
	\item
	For all $i\in\{1,\ldots,n\}$, $j\in\{1,\ldots,n^{N-1}\}$, $L_{N,j}^{i}(z)\in\mathcal{G}_{N}$.
	\item
	$\mathcal{G}_{N}\subseteq \{L_{N,j}^{i}(z)\}_{i,j}$.
\end{enumerate}

Let us set
\begin{equation}
\label{notation_MN}
M_{N}(z)=\begin{pmatrix}
M_{N}^{1}(z)\\
\colon\\
M_{N}^{n}(z)
\end{pmatrix}.
\end{equation}

This is a vector of $n^{N}$ rows of elements of $\mathcal{G}_{N}$. Let us define \eqref{notation_MN} by induction on $N$ in the following way.
\begin{enumerate}
	\item[(a)] 
	For all $i\in\{1,\ldots,n\}$,
	$$M_{1}^{i}(z)=f_{i}(z).$$
	\item[(b)]
	For all $N\geq 2$, $i\in\{1,\ldots,n\}$,
	$$M_{N}^{i}(z)=M_{N-1}(z)f_{i}(z)=\begin{pmatrix}
	M_{N-1}^{1}(z)f_{i}(z)\\
	\colon\\
	M_{N-1}^{n}(z)f_{i}(z)
	\end{pmatrix}.$$
\end{enumerate}

We see that this definition allows $M_{N}(z)$ to satisfy properties 1-3, for all $N\geq 1$.

Now, we have the following Lemma.
\begin{lem}
	\label{mahl_kronecker}
	The elements of $\mathcal{G}_{N}$ satisfy the following $d$-Mahler system
\begin{equation}
\label{syst_kronecker}
M_{N}(z^{d})=A^{\otimes N}(z)M_{N}(z),
\end{equation}	
where $\otimes$ stands for the Kronecker product.
\end{lem}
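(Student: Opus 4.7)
The plan is to prove this by recognizing that the vector $M_N(z)$ is precisely the $N$-th Kronecker power of the column vector $\mathbf{f}(z)=(f_1(z),\ldots,f_n(z))^{T}$, and then to invoke the mixed-product property of the Kronecker product.

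First, I would establish by induction on $N\geq 1$ that
$$M_N(z)=\mathbf{f}(z)^{\otimes N}:=\underbrace{\mathbf{f}(z)\otimes\mathbf{f}(z)\otimes\cdots\otimes\mathbf{f}(z)}_{N\text{ times}}.$$
The base case $N=1$ is the definition $M_1^i(z)=f_i(z)$. For the inductive step, the recursion $M_N^i(z)=M_{N-1}(z)f_i(z)$ together with the definition of $M_N(z)$ as the vertical stacking of the blocks $M_N^1(z),\ldots,M_N^n(z)$ exactly matches the block structure of the Kronecker product $\mathbf{f}(z)\otimes M_{N-1}(z)$: the $i$-th block of size $n^{N-1}$ equals $f_i(z)M_{N-1}(z)$. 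Hence $M_N(z)=\mathbf{f}(z)\otimes M_{N-1}(z)$, and the induction hypothesis gives $M_N(z)=\mathbf{f}(z)^{\otimes N}$.

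Next, I would substitute $z\mapsto z^d$ to obtain $M_N(z^d)=\mathbf{f}(z^d)^{\otimes N}$, and then use the $d$-Mahler system $\mathbf{f}(z^d)=A(z)\mathbf{f}(z)$. At this point the only ingredient left is the mixed-product property: for any matrices of compatible sizes,
$$(A_1B_1)\otimes (A_2B_2)=(A_1\otimes A_2)(B_1\otimes B_2).$$
Iterating this identity yields $(A(z)\mathbf{f}(z))^{\otimes N}=A(z)^{\otimes N}\,\mathbf{f}(z)^{\otimes N}$, so
$$M_N(z^d)=\bigl(A(z)\mathbf{f}(z)\bigr)^{\otimes N}=A(z)^{\otimes N}\mathbf{f}(z)^{\otimes N}=A^{\otimes N}(z)M_N(z),$$
which is \eqref{syst_kronecker}. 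Finally, since $A(z)\in\text{GL}_n(\mathbb{K}(z))$, the Kronecker power $A^{\otimes N}(z)$ lies in $\text{GL}_{n^N}(\mathbb{K}(z))$ (its inverse being $(A^{-1})^{\otimes N}(z)$), so the relation is genuinely a $d$-Mahler system.

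There is no real obstacle here; the only point requiring a bit of care is aligning the vertical stacking convention used in the definition of $M_N(z)$ with the block decomposition of the Kronecker product, so that $M_N(z)=\mathbf{f}(z)\otimes M_{N-1}(z)$ rather than $M_{N-1}(z)\otimes\mathbf{f}(z)$. Once that convention is fixed, the proof is a one-line application of the mixed-product property.
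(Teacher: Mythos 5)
Your proof is correct. It takes a slightly different route from the paper's: the paper proves \eqref{syst_kronecker} by induction on $N$ directly on the Mahler relation, computing in the inductive step that
$M_{N}^{i}(z^{d})=\sum_{j=1}^{n}\bigl(a_{i,j}(z)A^{\otimes N-1}(z)\bigr)M_{N}^{j}(z)$
and then recognizing this as the product of the $i$-th block row of $A^{\otimes N}(z)$ with $M_{N}(z)$. You instead use the induction only to identify $M_{N}(z)$ with the Kronecker power $\mathbf{f}(z)^{\otimes N}$ of the solution vector, and then obtain the whole system in one stroke from the mixed-product property $(A_{1}B_{1})\otimes(A_{2}B_{2})=(A_{1}\otimes A_{2})(B_{1}\otimes B_{2})$. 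The two arguments are essentially equivalent in content --- the mixed-product property is itself proved by exactly the block computation the paper carries out --- but yours is more conceptual and shorter once that identity is granted, at the cost of outsourcing the computation to a cited fact; the paper's version is self-contained and makes the block structure of $A^{\otimes N}(z)$ explicit, which is what it actually uses. You are right that the only delicate point is the stacking convention ($M_{N}(z)=\mathbf{f}(z)\otimes M_{N-1}(z)$, not $M_{N-1}(z)\otimes\mathbf{f}(z)$), and this must be matched with the same convention for $A^{\otimes N}(z)=A(z)\otimes A^{\otimes N-1}(z)$; with the standard definition both line up. Your closing remark on invertibility of $A^{\otimes N}(z)$ is not part of the lemma's proof in the paper, which handles it afterwards via $\det\bigl(A^{\otimes N}(z)\bigr)=\det\bigl(A(z)\bigr)^{nN}$, but it is a harmless and correct addition.
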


\begin{proof}[Proof of Lemma \ref{mahl_kronecker}]
	We prove Lemma \ref{mahl_kronecker} by induction on $N$. For $N=1$, \eqref{syst_kronecker} holds true. Now, let us assume that \eqref{syst_kronecker} is satisfied at the rank $N-1$. Let us set $A(z)=(a_{i,j}(z))_{i,j}$ for the matrix of $d$-Mahler System \eqref{syst_gen}. Then, we have for all $i\in\{1,\ldots,n\}$
	
	\begin{align}
	M_{N}^{i}(z^{d})&=M_{N-1}(z^{d})f_{i}(z^{d})\nonumber\\
	&=A^{\otimes N-1}(z)M_{N-1}(z)f_{i}(z^{d}), \text{ by assumption}\nonumber\\
	&=A^{\otimes N-1}(z)M_{N-1}(z)\sum_{j=1}^{n}a_{i,j}(z)f_{j}(z)\nonumber\\
	\label{partie_kronecker}
	&=\sum_{j=1}^{n}\left(a_{i,j}(z)A^{\otimes N-1}(z)\right)M_{N}^{j}(z).
	\end{align}
If we cut the rows of the matrix $A^{\otimes N}(z)$ from top to bottom into $n$ blocks of $n^{N-1}$ rows, \eqref{partie_kronecker} corresponds to the product of the $i$-th block of $A^{\otimes N}(z)$ by $M_{N}(z)$. This implies Lemma \ref{mahl_kronecker}.	
\end{proof}

We are now able to end the proof of Theorem \ref{phil_p}

\begin{proof}[End of the proof of Theorem \ref{phil_p}]
	
By property of Kronecker product (see for example \cite{Graham}), the coefficients of $A^{\otimes N}(z)$ are products of elements of $A(z)$ and we have 
$$\det\left(A^{\otimes N}(z)\right)=\det\left(A(z)\right)^{nN}.$$
We deduce that $\alpha$ is still a regular number for $d$-Mahler System \eqref{syst_kronecker}. On the other hand, $\overline{\mathbb{K}}(z)(f_{1}(z),\ldots, f_{n}(z))$ is regular over $\overline{\mathbb{K}}(z)$ and $$\overline{\mathbb{K}}(z)(\mathcal{G}_{N})\subseteq \overline{\mathbb{K}}(z)(f_{1}(z),\ldots, f_{n}(z)).$$
Then, by \cite[Corollary A1.6]{Eisenbud}, $\overline{\mathbb{K}}(z)(\mathcal{G}_{N})$ is separable over $\overline{\mathbb{K}}(z)$. It follows that $\overline{\mathbb{K}}(z)(\mathcal{G}_{N})$ is regular over $\overline{\mathbb{K}}(z)$. Hence, Theorem \ref{phil_p} follows from Theorem \ref{thm_phil_lin}.	
\end{proof}

\medskip

We end this section with the following remark, which allows us to consider Theorem \ref{phil_p} from an other point of view.

\begin{rem}
	\label{equi_ccl_prime}
	\em{
		Let us keep the assumptions of Theorem \ref{nish_gen}. Then, the regularity of $\overline{\mathbb{K}}(z)(f_{1}(z),\ldots,f_{n}(z))$ in the assumptions of Theorem \ref{phil_p} can be replaced by
		\begin{enumerate}
			\item [(i)]
			$\text{ev}_{\alpha}\left(\tilde{\mathfrak{p}}\cap \overline{\mathbb{K}}[z][X_{0},\ldots, X_{n}]\right)$ is prime in $\overline{\mathbb{K}}[X_{0},\ldots, X_{n}]$.	
		\end{enumerate}
		
		Indeed, if (i) is satisfied, we can reproduce the proof of Proposition \ref{phil_local_ens} from \eqref{fin_en_sachant_premier} to the end to show that Proposition \ref{phil_local_ens} holds true. Then, Corollary \ref{phil_local} holds true and it follows from Sections \ref{step2} and \ref{hom} that Theorem \ref{phil_p} holds true. Reciprocally, if Theorem \ref{phil_p} holds true, we have $\text{ev}_{\alpha}(\tilde{\mathfrak{p}}\cap \overline{\mathbb{K}}[z][X_{0},\ldots, X_{n}])=\tilde{\mathfrak{p}}_{\alpha}$, and (i) is satisfied.} 
\end{rem}

\section{Proof of Theorem \ref{thm_faux}}
\label{faux}
Let us keep the notations and assumptions of Theorem \ref{thm_faux}. We recall that $R_{\alpha}$ is the localization of the ring $\overline{\bm{k}}[z]$ at the ideal $(z-\alpha)$. Before going through the proof of Theorem \ref{thm_faux}, let us make a remark in the vein of Remark \ref{alpha_base_mod} and recall basic facts about Cartier operators, along with a result of S. Mac Lane concerning separability. Let $N\in\mathbb{N}$. We set
$$\mathcal{G}=\text{vect}_{\overline{\bm{k}}(z)}\{Q(f_{1}(z),\ldots,f_{n}(z)), Q\in\overline{\bm{k}}(z)[X_{1},\ldots, X_{n}], \text{ homogeneous }, \deg_{X}(Q)\leq N\}$$ 

\begin{rem}
	\label{alpha_base_poly}
	\em{
	We can prove, in the same way as in the proof of \cite[Lemma 3]{Nest_shid}, that there exist monic monomials $M_{l}(X_{1},\ldots, X_{n})$, with $\deg_{X}(M_{l})\leq N$, $l=1,\ldots,s$, such that the family $\{M_{l}(f_{1}(z),\ldots, f_{n}(z))\}_{l}$ is a basis of $\mathcal{G}$ which satisfies the following property.

\begin{enumerate}
	\item [$\left(*_{\alpha}\right)$]
	For all $P(X_{1},\ldots,X_{n})\in R_{\alpha}[X_{1},\ldots,X_{n}]$, there exist $P_{1}(z),\ldots, P_{s}(z)\in R_{\alpha}$ such that
	$$P(f_{1}(z),\ldots,f_{n}(z))=\sum_{l=1}^{s} P_{l}(z)M_{l}(f_{1}(z),\ldots, f_{n}(z)).$$
\end{enumerate}}
\end{rem}

If $\bm{k}$ has characteristic $p$, we recall some basic facts about Cartier operators. Let $f(z)=\sum_{n=0}^{+\infty}a(n)z^{n}\in \tilde{\bm{k}}[[z]]$. Let $r\in\{0,\ldots,p-1\}$. The $r$-th Cartier operator over $\tilde{\bm{k}}[[z]]$ is defined by
$$\Lambda_{r}(f)=\sum_{n=0}^{+\infty}a(np+r)^{1/p}z^{n}.$$

Then, we recall the following result.

\begin{prop}
	\label{Cartier}
	Let $f,g\in \tilde{\bm{k}}[[z]]$.
	\begin{enumerate}
		\item 
		We have $$f(z)=\sum_{i=0}^{p-1}\Lambda_{i}(f)^{p}z^{i}.$$
		In particular, 
		$$f(z)\neq 0\Rightarrow\exists i\in\{0,\ldots,p-1\},\Lambda_{i}(f)\neq 0.$$
		\item
		Let $i\in\{0,\ldots,p-1\}$. Then
		$$\Lambda_{i}(fg^{p})=\Lambda_{i}(f)g.$$ 
	\end{enumerate}
	
\end{prop}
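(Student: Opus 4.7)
The plan is to treat both assertions as direct coefficient-level computations, the key analytic input being that $\tilde{\bm{k}}$ is algebraically closed and hence perfect, so that $p$-th roots exist and are unique, and that the Frobenius $x\mapsto x^p$ is a ring endomorphism of $\tilde{\bm{k}}[[z]]$ in characteristic $p$.

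For item 1, I would write $f(z)=\sum_{n\geq 0} a(n) z^n$ and split the sum by the residue class of $n$ modulo $p$, obtaining
$$f(z)=\sum_{i=0}^{p-1} z^i \sum_{m\geq 0} a(mp+i) z^{mp}.$$
Using $a(mp+i)=(a(mp+i)^{1/p})^p$ and $z^{mp}=(z^m)^p$, and then pulling the $p$-th power out of the inner sum via Frobenius, each inner sum equals $\Lambda_i(f)^p$. This yields the desired identity. The "in particular" statement is then immediate: if $f\neq 0$, pick any $n$ with $a(n)\neq 0$, write $n=mp+i$ with $0\leq i\leq p-1$, and observe that the $m$-th coefficient of $\Lambda_i(f)$ is $a(n)^{1/p}\neq 0$.

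For item 2, I would expand the product. Writing $g(z)=\sum_m b(m) z^m$, the Frobenius identity in characteristic $p$ gives $g(z)^p=\sum_m b(m)^p z^{mp}$. Multiplying by $f(z)=\sum_n a(n) z^n$, the coefficient of $z^{np+i}$ in $fg^p$ is
$$\sum_{m=0}^{n} a\bigl((n-m)p+i\bigr)\, b(m)^p,$$
since among the pairs $(k,\ell)$ with $k+\ell p=np+i$ and $0\leq i\leq p-1$, the index $k$ is forced to lie in the residue class $i$ modulo $p$. Taking $p$-th roots coefficient-wise and using that the $p$-th root is a ring homomorphism on the perfect field $\tilde{\bm{k}}$, the $n$-th coefficient of $\Lambda_i(fg^p)$ becomes
$$\sum_{m=0}^{n} a\bigl((n-m)p+i\bigr)^{1/p}\, b(m),$$
which is exactly the Cauchy product coefficient giving the $n$-th coefficient of $\Lambda_i(f)\cdot g$. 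Hence $\Lambda_i(fg^p)=\Lambda_i(f)\,g$.

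No substantial obstacle is anticipated: the proof is pure bookkeeping once one leverages perfectness of $\tilde{\bm{k}}$ and the Frobenius identity $(x+y)^p=x^p+y^p$. The only point requiring minimal care is keeping track of which residue class mod $p$ the indices fall into when computing Cauchy products, which is handled by the unique decomposition $np+i$ with $0\leq i\leq p-1$.
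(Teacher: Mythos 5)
Your proof is correct: the paper simply \emph{recalls} Proposition \ref{Cartier} as a standard fact about Cartier operators and gives no proof of its own, so there is nothing to compare against beyond noting that your coefficient-by-coefficient computation is the standard verification. Both steps are sound --- the splitting of $f$ by residue classes modulo $p$ together with the Frobenius identity for item 1, and the observation that in the Cauchy product for $fg^{p}$ the index contributed by $f$ is forced into the class $i$ modulo $p$, combined with the fact that the $p$-th root is a ring homomorphism on the perfect field $\tilde{\bm{k}}$, for item 2.
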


Besides, if $\bm{k}$ has characteristic $p$, we write $\bm{k}^{1/p^{\infty}}$ to denote the perfect closure of $\bm{k}$. That is, the union over $n$ of the fields generated by the $p^{n}$-th roots of all the elements of $\bm{k}$. Finally, we recall a fundamental theorem from S. Mac Lane \cite{Maclane} (see also \cite[Theorem A1.3]{Eisenbud}).

\begin{thm}[S. Mac Lane]
	\label{thm_sep_fonda}
	Let $\bm{k}\subseteq L$ be a field extension. Then, this extension is separable if and only if every family $\{x_{i}\}_{i}$ of elements of $L$ that is linearly independent over $\bm{k}$ remains linearly independent over $\bm{k}^{1/p^{\infty}}$.  
\end{thm}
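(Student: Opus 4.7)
The plan is to recognize the stated condition as exactly the definition of $L$ being linearly disjoint from $\bm{k}^{1/p^{\infty}}$ over $\bm{k}$ inside a common algebraically closed overfield $\Omega$, and then to prove the equivalence between separability and this linear disjointness. In characteristic zero there is nothing to show since $\bm{k}^{1/p^{\infty}}=\bm{k}$ and every extension is separable, so I would assume $\operatorname{char}\bm{k}=p>0$. I would also first reduce to the case where only finitely many elements are involved: any would-be counterexample (either a non-separable tower or a dependence relation) sits inside a finitely generated subextension $L_{0}\subseteq L$, and separability behaves well under taking subextensions and direct limits.

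For the direction $(\Rightarrow)$, assume $L/\bm{k}$ separable and pick a separating transcendence basis $\mathcal{F}$. The step I would do first is to show that $\bm{k}(\mathcal{F})$ is linearly disjoint from $\bm{k}^{1/p^{\infty}}$ over $\bm{k}$: a relation among polynomials in $\mathcal{F}$ with coefficients in $\bm{k}^{1/p^{\infty}}$ can, after clearing denominators, be read coefficient-by-coefficient on the monomials in $\mathcal{F}$, using that $\mathcal{F}$ remains algebraically independent over the algebraic extension $\bm{k}^{1/p^{\infty}}$. Next, since $L/\bm{k}(\mathcal{F})$ is separable algebraic and $\bm{k}^{1/p^{\infty}}(\mathcal{F})/\bm{k}(\mathcal{F})$ is purely inseparable, the classical fact that separable algebraic and purely inseparable extensions are linearly disjoint applies. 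By the tower property of linear disjointness (linearly disjoint over $\bm{k}$ of a subextension, combined with linearly disjoint relative to that subextension, yields linearly disjoint over $\bm{k}$), I conclude that $L$ is linearly disjoint from $\bm{k}^{1/p^{\infty}}$ over $\bm{k}$, which is exactly the stated condition.

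For $(\Leftarrow)$, I would argue by contrapositive: assume $L/\bm{k}$ is not separable, reduce to a finitely generated presentation $L=\bm{k}(x_{1},\ldots,x_{n})$ with transcendence degree $r<n$, and exhibit a $\bm{k}$-linearly independent family that becomes dependent over $\bm{k}^{1/p}\subseteq\bm{k}^{1/p^{\infty}}$. The non-existence of a separating transcendence basis forces the existence of an element $y\in L$ whose minimal polynomial over some $\bm{k}(x_{i_{1}},\ldots,x_{i_{r}})$ lies in $\bm{k}(x_{i_{1}},\ldots,x_{i_{r}})[T^{p}]$; clearing denominators produces a vanishing polynomial expression of the form $\sum_{\alpha}c_{\alpha}\,m_{\alpha}^{p}=0$ where the $c_{\alpha}\in\bm{k}$ have no $\bm{k}$-linear dependence among any $\bm{k}$-basis one extracts, yet taking $p$-th roots yields $\sum_{\alpha}c_{\alpha}^{1/p}\,m_{\alpha}=0$ with $c_{\alpha}^{1/p}\in\bm{k}^{1/p}$. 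Choosing a $\bm{k}$-linearly independent subfamily of the $m_{\alpha}$'s that still participates nontrivially in this relation (using that the $c_{\alpha}$ are not all zero) delivers the required witness.

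The main obstacle I expect is making this last extraction precise: one must carefully prune the family $\{m_{\alpha}\}$ to obtain a sub-family that is simultaneously $\bm{k}$-linearly independent and still carries a nontrivial $\bm{k}^{1/p}$-relation. A clean way to do this is to work with $\bm{k}^{1/p}\otimes_{\bm{k}}L$ and appeal to the fact that this tensor product fails to be reduced precisely when $L/\bm{k}$ is not separable; a nonzero nilpotent $\sum a_{i}\otimes x_{i}$ (with $a_{i}\in\bm{k}^{1/p}$) then translates directly into a dependence over $\bm{k}^{1/p}$ of $\bm{k}$-linearly independent elements of $L$. The harder conceptual step is justifying this tensor-product criterion for separability in the finitely generated setting, which is where I would cite the classical theory rather than reproving it from scratch.
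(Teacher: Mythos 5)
The paper does not prove this statement: it is recalled as a classical theorem, with references to \cite{Maclane} and \cite[Theorem A1.3]{Eisenbud}, and then used as a black box in the proof of Proposition \ref{tjrs_sep}. So any proof you supply is necessarily ``a different route''; the question is only whether yours is sound. Your forward direction is the standard argument and is essentially complete: linear disjointness of $\bm{k}(\mathcal{F})$ from $\bm{k}^{1/p^{\infty}}$ follows from the algebraic independence of $\mathcal{F}$ over the algebraic extension $\bm{k}^{1/p^{\infty}}$ (the coefficient-matrix argument you sketch is correct), the separable algebraic extension $L/\bm{k}(\mathcal{F})$ is linearly disjoint from the purely inseparable extension $\bm{k}^{1/p^{\infty}}(\mathcal{F})/\bm{k}(\mathcal{F})$, and the tower property of linear disjointness concludes.

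Two points need repair in the converse. First, the ``pruning'' step you flag is a real gap as written, and the fix is not the tensor-product criterion --- the equivalence ``$\bm{k}^{1/p}\otimes_{\bm{k}}L$ reduced $\Leftrightarrow$ $L/\bm{k}$ separably generated'' is essentially the theorem you are proving, so citing it is circular --- but a minimality argument: among $r+1$ algebraically dependent generators choose an irreducible vanishing polynomial $f$ of minimal total degree. If $f\notin\bm{k}[X_{1}^{p},\ldots,X_{r+1}^{p}]$, some generator is separably algebraic over the others and Mac Lane's exchange induction proceeds; otherwise $f=\sum_{\alpha}c_{\alpha}X^{p\alpha}$, and the monomials $m_{\alpha}=x^{\alpha}$ are automatically $\bm{k}$-linearly independent, since a dependence $\sum_{\alpha}d_{\alpha}m_{\alpha}=0$ would furnish a nonzero vanishing polynomial of degree $\deg(f)/p$, contradicting minimality; then $\bigl(\sum_{\alpha}c_{\alpha}^{1/p}m_{\alpha}\bigr)^{p}=0$ gives the required witness over $\bm{k}^{1/p}$. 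Second, be careful with what ``separable'' means when $L/\bm{k}$ is not finitely generated: the paper only defines separability via separating transcendence bases for finitely generated extensions, and with that reading the theorem is false in general (for instance $\mathbb{F}_{p}(t,t^{1/p},t^{1/p^{2}},\ldots)/\mathbb{F}_{p}$ satisfies the linear-independence condition vacuously, $\mathbb{F}_{p}$ being perfect, yet admits no separating transcendence basis). Your reduction to finitely generated subextensions is therefore not a convenience but a necessity, and ``separable'' must be read as ``every finitely generated subextension is separably generated''. Since the paper only applies the theorem to the finitely generated extension $\overline{\bm{k}}(z)(f_{1}(z),\ldots,f_{n}(z))$, this caveat is harmless for its purposes.
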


Then, we prove the following result.

\begin{prop}
	\label{tjrs_sep}
	Let $\bm{k}$ be a valued field and let us assume that $f_{1}(z),\ldots, f_{n}(z)\in\overline{\bm{k}}\{z\}$ are analytic functions on a domain $\mathcal{U}\subseteq\tilde{\bm{k}}$ which contains the origin. Then, the extension $\overline{\bm{k}}(z)(f_{1}(z), \ldots, f_{n}(z))$ is separable over $\overline{\bm{k}}(z)$. 
\end{prop}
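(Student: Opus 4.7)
The plan is to reduce to positive characteristic and combine Mac Lane's criterion (Theorem \ref{thm_sep_fonda}) with the Laurent expansions at the origin of the $f_{i}(z)$. If $\text{char}(\bm{k})=0$ separability is automatic, so I would assume $p:=\text{char}(\bm{k})>0$ and write $F=\overline{\bm{k}}(z)$, $L=F(f_{1}(z),\ldots,f_{n}(z))$. The first remark is that, since $\overline{\bm{k}}$ is algebraically closed, hence perfect, one has $F^{1/p^{m}}=\overline{\bm{k}}(z^{1/p^{m}})$, which is free over $F$ on the basis $\{1,w,w^{2},\ldots,w^{p^{m}-1}\}$ with $w:=z^{1/p^{m}}$. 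By Mac Lane's theorem, it then suffices to show that any $F$-linearly independent family $g_{1},\ldots,g_{s}\in L$ remains linearly independent over $\overline{\bm{k}}(z^{1/p^{m}})$ for each $m\geq 1$.

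Next, I would set up an embedding $L\hookrightarrow\overline{\bm{k}}((z))$ via the Laurent series expansion at $0$. Since $f_{1}(z),\ldots,f_{n}(z)\in\overline{\bm{k}}[[z]]$, the specialization $X_{i}\mapsto f_{i}(z)$ defines a ring morphism $F[X_{1},\ldots,X_{n}]\to\overline{\bm{k}}((z))$ whose kernel is precisely $\mathfrak{p}$; so the integral domain $F[X_{1},\ldots,X_{n}]/\mathfrak{p}\cong F[f_{1}(z),\ldots,f_{n}(z)]$ injects into the field $\overline{\bm{k}}((z))$, and this embedding extends uniquely to the fraction field $L$. Fixing $m\geq 1$ and supposing, toward a contradiction, that $\sum_{i=1}^{s}c_{i}g_{i}=0$ with $c_{i}\in\overline{\bm{k}}(w)$ not all zero, I would expand each coefficient in the basis above as $c_{i}=\sum_{j=0}^{p^{m}-1}a_{i,j}(z)w^{j}$ with $a_{i,j}\in F$, and rewrite the relation in $\overline{\bm{k}}((w))$ as
$$0=\sum_{j=0}^{p^{m}-1}w^{j}h_{j}(z),\qquad h_{j}(z):=\sum_{i=1}^{s}a_{i,j}(z)g_{i}\in L\subseteq\overline{\bm{k}}((z))=\overline{\bm{k}}((w^{p^{m}})).$$

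The crux is then the observation that each $w^{j}h_{j}$ has $w$-expansion supported on integer exponents congruent to $j\pmod{p^{m}}$; these supports are pairwise disjoint for $j=0,\ldots,p^{m}-1$, so the vanishing of the total sum forces every $h_{j}=0$, and the $F$-linear independence of the $g_{i}$ then yields $a_{i,j}=0$ for all $i,j$, whence $c_{i}=0$ for all $i$, the desired contradiction. This separation by $p^{m}$-residues can equivalently be repackaged using iterated Cartier operators $\Lambda_{r}$ in the variable $w$, which is presumably why Proposition \ref{Cartier} is recalled just above. I do not expect a serious obstacle: the only point that genuinely calls for care is verifying the embedding $L\hookrightarrow\overline{\bm{k}}((z))$, after which everything reduces to formally decomposing a vanishing Laurent series across its residue classes modulo $p^{m}$.
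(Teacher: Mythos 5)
Your proof is correct and follows essentially the same route as the paper: both arguments reduce to Mac Lane's criterion (Theorem \ref{thm_sep_fonda}), note that $\overline{\bm{k}}(z)^{1/p^{m}}=\overline{\bm{k}}(z^{1/p^{m}})$ because $\overline{\bm{k}}$ is perfect, and then kill a putative dependence relation by separating a Laurent series according to the residues of its exponents modulo $p^{m}$. Your disjoint-support decomposition in $\overline{\bm{k}}((w))$ is exactly the paper's iterated Cartier operator argument (the paper raises the relation to the $p^{\mu}$-th power and applies $\Lambda^{(\mu)}\circ\cdots\circ\Lambda^{(1)}$, which is the same decomposition written multiplicatively), as you yourself observe.
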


\begin{proof}[Proof of Proposition \ref{tjrs_sep}]
	
	If the characteristic of $\bm{k}$ is zero, the result is known. Now, let us assume that $\bm{k}$ has characteristic $p>0$.	Let us assume by contradiction that  $\overline{\bm{k}}(z)(f_{1}(z), \ldots, f_{n}(z))$ is not separable over $\overline{\bm{k}}(z)$. Let us note that 
	$$\overline{\bm{k}}(z)^{1/p^{\infty}}=\cup_{k=0}^{+\infty}\overline{\bm{k}}\left(z^{1/p^{k}}\right).$$  
	Besides, let us set
	$$\overline{\bm{k}}[z]^{1/p^{\infty}}=\cup_{k=0}^{+\infty}\overline{\bm{k}}\left[z^{1/p^{k}}\right].$$
	
	By Theorem \ref{thm_sep_fonda}, there exist elements $g_{1}(z),\ldots,g_{m}(z)\in\overline{\bm{k}}(z)(f_{1}(z), \ldots, f_{n}(z))$ which are linearly independent over $\overline{\bm{k}}(z)$ but linearly dependent over $\overline{\bm{k}}(z)^{1/p^{\infty}}$. Let $D\in\overline{\bm{k}}[z][f_{1}(z), \ldots, f_{n}(z)]\setminus\{0\}$ be such that $$g_{i}(z)D\in\overline{\bm{k}}[z][f_{1}(z), \ldots, f_{n}(z)], \text{ }\forall 1\leq i\leq m.$$
	Then, $g_{1}(z)D,\ldots,g_{m}(z)D$ are linearly independent over $\overline{\bm{k}}(z)$ but linearly dependent over $\overline{\bm{k}}(z)^{1/p^{\infty}}$. Hence, even if it means replacing each $g_{i}(z)$ by $g_{i}(z)D$, we assume that for all $1\leq i\leq m$, $g_{i}(z)\in\overline{\bm{k}}[z][f_{1}(z), \ldots, f_{n}(z)]$.
	
    Then, there exist elements $G_{1}(z),\ldots, G_{m}(z)\in \overline{\bm{k}}[z]^{1/p^{\infty}}$ not all zero such that 
	$$\sum_{i=1}^{m}G_{i}(z)g_{i}(z)=0.$$
	
	Without any loss of generality, we may assume that $G_{m}(z)\neq 0$. On the other hand, there exists an integer $\mu\geq 1$ such that $G_{i}(z)^{p^{\mu}}\in\overline{\bm{k}}[z]$ for all $1\leq i\leq m$. Then, we have
    \begin{equation}
    \label{pmin_bis}
    \sum_{i=1}^{m}G_{i}(z)^{p^{\mu}}g_{i}(z)^{p^{\mu}}=0.
    \end{equation} 
   
   Let us note that for all $i\in\{1,\ldots,m\}$, $$G_{i}(z)^{p^{\mu}},g_{i}(z)\in\overline{\bm{k}}[z][f_{1}(z), \ldots, f_{n}(z)]\subseteq\overline{\bm{k}}\{z\}.$$ 
   Now, let us choose for every integer $j\in\{1,\ldots, \mu\}$ a Cartier operator $\Lambda^{(j)}$ such that $$\Lambda^{(\mu)}\circ\cdots\circ\Lambda^{(1)}(G_{m}(z)^{p^{\mu}})\neq 0.$$

	We apply $\Lambda:=\Lambda^{(\mu)}\circ\cdots\circ\Lambda^{(1)}$ to \eqref{pmin_bis} and get
	\begin{equation}
	\label{eq_non_triv}
	\sum_{i=1}^{m}\Lambda(G_{i}(z)^{p^{\mu}})g_{i}(z)=0.
	\end{equation} 
	
Then, \eqref{eq_non_triv} is a non-trivial linear relation between the $g_{i}(z)$'s over $\overline{\bm{k}}(z)$ and a contradiction. Proposition \ref{tjrs_sep} is thus proved.

\end{proof}

Before proving Theorem \ref{thm_faux}, we introduce some definitions. 
%We note that if the series expansion of $f(z)$ at $\alpha$ has a radius equal to $R$, then, for all $\beta$ in the disc $\mathbb{D}(\alpha,R)$ centered in $\alpha$ and of radius equal to $R$, $f(z)\in C\{z-\beta\}$. 
We recall that $\bm{k}$ denote a valued field, $\bm{k}_{c}$ its completion. Its valuation extends uniquely to $\overline{\bm{k}_{c}}$, and $\tilde{\bm{k}}$ denote the completion of $\overline{\bm{k}_{c}}$ with respect to this valuation. Now, let $\mathcal{U}\subseteq\tilde{\bm{k}}$ be a domain. We say that a function is meromorphic on $\mathcal{U}$ if there exists a (possibly empty) discrete closed subset $\mathcal{P}$ of $\mathcal{U}$ such that $f(z)$ is analytic on $\mathcal{U}\setminus\mathcal{P}$, and each element of $\mathcal{P}$ is a pole of $f(z)$. Then, for all $\alpha\in\mathcal{P}$, $f(z)$ admits a convergent Laurent power series expansion in a punctured neighbourhood of $\alpha$ with coefficients in $\tilde{\bm{k}}$, of the form $\sum_{n=-N}^{+\infty}a_{n}(z-\alpha)^{n}$. We notice that if $\{f_{i}(z)\}_{1\leq i\leq n}\subset\mathbb{K}\{z\}$ satisfies System \eqref{syst_gen}, if $0<|\alpha|<1$, and if for all $k\in\mathbb{N}$ the number $\alpha^{d^{k}}$ is not a pole of $A^{-1}(z)$, then the $f_{i}(z)$ are well-defined at $\alpha$ and $\{f_{i}(z)\}_{1\leq i\leq n}\subset C\{z-\alpha\}$. 
\medskip

We are now able to prove Theorem \ref{thm_faux}.

\begin{proof}[Proof of Theorem \ref{thm_faux}]
Let us assume that the extension $\overline{\bm{k}}(z)(f_{1}(z), \ldots, f_{n}(z))$ is not regular over $\overline{\bm{k}}(z)$. We recall that $\overline{\bm{k}}(z)(f_{1}(z), \ldots, f_{n}(z))$ is regular over $\overline{\bm{k}}(z)$ if
\begin{enumerate}
	\item
	$\overline{\bm{k}}(z)(f_{1}(z), \ldots, f_{n}(z))$ is separable over $\overline{\bm{k}}(z)$
	\item
	Every element of $\overline{\bm{k}}(z)(f_{1}(z), \ldots, f_{n}(z))$ that is algebraic over $\overline{\bm{k}}(z)$ belongs to $\overline{\bm{k}}(z)$.
\end{enumerate}

By Proposition \ref{tjrs_sep}, we only have to prove that the conclusion of Theorem \ref{thm_faux} holds true when there exists an element of $\overline{\bm{k}}(z)(f_{1}(z), \ldots, f_{n}(z))$ that is algebraic over $\overline{\bm{k}}(z)$ but does not belong to $\overline{\bm{k}}(z)$.

Thus, let us assume that there exists an element $a(z)\in\overline{\bm{k}}(z)(f_{1}(z),\ldots,f_{n}(z))\cap \overline{\bm{k}(z)}\setminus\overline{\bm{k}}(z)$. We can write
\begin{equation}
\label{frac1}
a(z)=\frac{P(f_{1}(z),\ldots,f_{n}(z))}{Q(f_{1}(z),\ldots,f_{n}(z))},
\end{equation}
where $P(X_{1},\ldots,X_{n}), Q(X_{1},\ldots,X_{n})\in \overline{\bm{k}}[z][X_{1},\ldots,X_{n}]$ are polynomials of total degree less than or equal to some integer $N\geq 0$. 

We recall that $\mathcal{G}$ denotes the $\overline{\bm{k}}(z)$-vector space generated by all homogeneous polynomials of degree less than or equal to $N$ in $f_{1}(z),\ldots,f_{n}(z)$. 

By Remark \ref{alpha_base_poly}, there exist monic monomials $M_{l}(X_{1},\cdots,X_{n})$, with $\deg_{X}(M_{l})\leq N$, $l=1,\ldots,s$, such that the family $\{M_{l}(\{f_{i}(z)\})\}_{l}$ is a basis of $\mathcal{G}$ over $\overline{\bm{k}}(z)$ which satisfies Property $\left(*_{\alpha}\right)$ of Remark \ref{alpha_base_poly}. Then, \eqref{frac1} turns into
\begin{equation}
\label{frac2}
a(z)=\frac{N_{1}(z)M_{1}(\{f_{i}(z)\})+\cdots+ N_{s}(z)M_{s}(\{f_{i}(z)\})}{D_{1}(z)M_{1}(\{f_{i}(z)\})+\cdots+D_{s}(z)M_{s}(\{f_{i}(z)\})},
\end{equation}
where for all $l\in\{1,\ldots,s\}$, $N_{l}(z),D_{l}(z)\in \overline{\bm{k}}[z]$.
 
We can rewrite \eqref{frac2} in the following way

\begin{equation}
\label{relfct}
F_{1}(z)M_{1}(\{f_{i}(z)\})+\cdots+F_{s}(z)M_{s}(\{f_{i}(z)\})=0,
\end{equation}
where for all $l\in\{1,\ldots,s\}$, $F_{l}(z)=D_{l}(z)a(z)-N_{l}(z)$.

We may assume without any loss of generality that for all $l$, $F_{l}(z)\in \tilde{\bm{k}}\{z-\alpha\}$. Indeed, on the one hand, as the functions $f_{1}(z),\ldots,f_{n}(z)\in \tilde{\bm{k}}\{z-\alpha\}$, $a(z)$ can be expressed as a Laurent power series at the point $z=\alpha$. If $a(z)\notin \tilde{\bm{k}}\{z-\alpha\}$, writing $u>0$ the order of the pole of $a(z)$ at $z=\alpha$, we could replace $a(z)$ by the function
$$(z-\alpha)^{u}a(z)\in\overline{\bm{k}}(z)(f_{1}(z),\ldots,f_{n}(z))\cap \overline{\bm{k}(z)}\setminus\overline{\bm{k}}(z),$$
which has no pole at $z=\alpha$. Therefore, we can assume that $a(z)\in \tilde{\bm{k}}\{z-\alpha\}$.
Then, as for all $l\in\{1,\ldots,s\}$, $N_{l}(z),D_{l}(z)\in \overline{\bm{k}}[z]$, we get that $F_{l}(z)\in \tilde{\bm{k}}\{z-\alpha\}$. 
%On the other hand, if one element among the $N_{l}(z)$'s or the $D_{l}(z)$'s is not in $\overline{\bm{k}}[z]$, we could replace $a(z)$ by 
%$$a(z)\frac{A(z)}{B(z)}\in\left(\overline{\bm{k}}(z)(f_{1}(z),\ldots,f_{n}(z))\cap \overline{\bm{k}(z)}\right)\setminus\overline{\bm{k}}(z),$$ 
%where $A(z)$ and $B(z)$ are respectively a common denominator of the $N_{l}(z)$'s and the $D_{l}(z)$'s. We could then write
%\begin{equation}
%\label{frac2bis}
%a(z)\frac{A(z)}{B(z)}=\frac{N'_{1}(z)M_{1}(\{f_{i}(z)\})+\cdots N'_{s}(z)M_{s}(\{f_{i}(z)\})}{D'_{1}(z)M_{1}(\{f_{i}(z)\})+\cdots+D'_{s}(z)M_{s}(\{f_{i}(z)\})},
%\end{equation}
%where for all $l$, $N'_{l}(z),D'_{l}(z)\in \overline{\bm{k}}[z]$. Therefore, we can assume that for all $l\in\{1,\ldots,s\}$, $N_{l}(z),D_{l}(z)\in \overline{\bm{k}}[z]$. Hence, even if it means replacing $a(z)$ by a suitable element, we assume that for all $l\in\{1,\ldots,s\}$, $F_{l}(z)\in C\{z-\alpha\}$.

Now, let us notice that we can assume without any loss of generality that
\begin{equation}
\label{nonnul}
\exists l_{0}\in\{1,\ldots,s\}, F_{l_{0}}(\alpha)\neq 0.
\end{equation}

Indeed, $F_{l}(z)\in \tilde{\bm{k}}\{z-\alpha\}$. Therefore, if \eqref{nonnul} is not satisfied, let $v>0$ denote the minimal order at $\alpha$ as a zero of the functions $F_{l}(z)$. Then, instead of \eqref{relfct}, we could consider the following equation
 
\begin{equation}
\label{relfctbis}
G_{1}(z)M_{1}(\{f_{i}(z)\})+\cdots+G_{s}(z)M_{s}(\{f_{i}(z)\})=0,
\end{equation}
where for all $l\in\{1,\ldots,s\}$, $G_{l}(z)=\frac{F_{l}(z)}{(z-\alpha)^{v}}\in \tilde{\bm{k}}\{z-\alpha\}$. The functions $G_{l}$ satisfy \eqref{nonnul}. Hence, even if it means replacing \eqref{nonnul} by \eqref{relfctbis}, we assume that \eqref{nonnul} holds. 

Then, we have
\begin{equation}
\label{rel}
F_{1}(\alpha)M_{1}(\{f_{i}(\alpha)\})+\cdots+F_{s}(\alpha)M_{s}(\{f_{i}(\alpha)\})=0.
\end{equation}
Hence, setting
$$P\left(X_{1},\ldots, X_{n}\right)=\sum_{l=1}^{s}F_{l}(\alpha)M_{l}\left(X_{1},\ldots, X_{n}\right),$$
we get
\begin{equation}
\label{rel_contr}
P(f_{1}(\alpha),\ldots, f_{n}(\alpha))=0.
\end{equation}
Let us assume by contradiction that the relation \eqref{rel_contr} lifts into a functional relation over $\overline{\bm{k}}(z)$. Let $N'\leq N$ denote the total degree of $P\left(X_{1},\ldots, X_{n}\right)$. Then, there exists a polynomial\\ $Q\left(z,X_{1},\ldots, X_{n}\right)\in \overline{\bm{k}}[z][X_{1},\ldots, X_{n}]$ of total degree $N'$ in $X_{1},\ldots, X_{n}$ such that
\begin{equation}
\label{annul}
Q\left(z,f_{1}(z),\ldots, f_{n}(z)\right)=0,
\end{equation}

and
\begin{equation}
\label{eval}
Q\left(\alpha,X_{1},\ldots, X_{n}\right)=P\left(X_{1},\ldots, X_{n}\right).
\end{equation}

Let us notice that the family $\{M_{l}(X_{1},\ldots, X_{n})\}_{l}$ is free over $\overline{\bm{k}}(z)$. Let $\{N_{j}(X_{1},\ldots, X_{n})\}_{1\leq j\leq t}$ be a family of monic monomials such that the family $\{M_{l}(X_{1},\ldots, X_{n}),N_{j}(X_{1},\ldots, X_{n})\}_{l,j}$ is a basis of the $\overline{\bm{k}}(z)$-vector space spanned by all homogeneous polynomials of degree less than or equal to $N$ in $X_{1},\ldots, X_{n}$.

Then, we can write the polynomial $Q\left(z,X_{1},\ldots, X_{n}\right)$ in the following way.
$$Q\left(z,X_{1},\ldots, X_{n}\right)=\sum_{l=1}^{s}Q_{l}(z)M_{l}\left(X_{1},\ldots, X_{n}\right)+\sum_{j=1}^{t}R_{j}(z)N_{j}\left(X_{1},\ldots, X_{n}\right),$$
where for all $l$ and $j$, $Q_{l}(z), R_{j}(z)\in \overline{\bm{k}}[z]$. By \eqref{eval}, we have
\begin{equation}
\label{eval_nulle}
\begin{cases}
Q_{l}(\alpha)&=F_{l}(\alpha), \text{ } \forall 1\leq l\leq s\\
R_{j}(\alpha)&=0, \text{ } \forall 1\leq j\leq t.
\end{cases}
\end{equation}
Now, we have
\begin{align}
0 &=Q\left(z,f_{1}(z),\ldots, f_{n}(z)\right)\nonumber\\
\label{annul_poly_fct}
&=\sum_{l=1}^{s}Q_{l}(z)M_{l}\left(\{f_{i}(z)\}\right)+\sum_{j=1}^{t}R_{j}(z)N_{j}\left(\{f_{i}(z)\}\right).
\end{align}

Let us remark that
$$N_{j}\left(X_{1},\ldots, X_{n}\right)\in \overline{\bm{k}}[X_{1},\ldots, X_{n}]\subseteq R_{\alpha}[X_{1},\ldots, X_{n}].$$
Hence, by Remark \ref{alpha_base_poly}, we get that for all $j\in\{1,\ldots,t\}$ there exist polynomials $S_{j,1}(z),\ldots, S_{j,s}(z)\in R_{\alpha}$ such that
$$N_{j}\left(\{f_{i}(z)\}\right)=\sum_{l=1}^{s}S_{j,l}(z)M_{l}\left(\{f_{i}(z)\}\right).$$
Therefore, \eqref{annul_poly_fct} turns into
$$0=\sum_{l=1}^{s}Q_{l}(z)M_{l}\left(\{f_{i}(z)\}\right)+\sum_{j=1}^{t}R_{j}(z)\sum_{l=1}^{s}S_{j,l}(z)M_{l}\left(\{f_{i}(z)\}\right).$$
Now, $\{M_{l}\left(f_{1}(z),\ldots,f_{n}(z)\right)\}_{l}$ is a $\overline{\bm{k}}(z)$-basis of $\mathcal{G}$. Thus, we obtain
$$Q_{l}(z)+\sum_{j=1}^{t}R_{j}(z)S_{j,l}(z)=0, \text{ }\forall l\in\{1,\ldots, s\},$$
and
$$Q_{l}(\alpha)+\sum_{j=1}^{t}R_{j}(\alpha)S_{j,l}(\alpha)=0, \text{ }\forall l\in\{1,\ldots, s\}.$$
By \eqref{eval_nulle}, we get
$$F_{l}(\alpha)=0, \text{ } \forall 1\leq l\leq s,$$
which contradicts \eqref{nonnul}. Theorem \ref{thm_faux} is proved.

\end{proof}

\section{Proof of Theorem \ref{l_mahler_bis} and Corollary \ref{l_mahler}}
\label{sec_courbe}

We prove Theorem \ref{l_mahler_bis} following the strategy of J. Roques \cite{JR}. We extend Proposition 4 and Corollary 5 of \cite{JR} for the base field $C$ instead of $\overline{\mathbb{F}_{p}}$. The analogue of Proposition 4 is the following.

\begin{prop}
	\label{l_prop}
	
	Let $L$ be a finite extension of $C(z)$. Let $d\geq 2$ be an integer such that $p \nmid d$. Let us assume that the endomorphism $\phi_{d}$ of $C(z)$ defined by $\phi_{d}(P(z))=P(z^{d})$ extends to a field endomorphism of $L$. Then, there exist a positive integer $N$ and $z_{N}\in L$ such that
	\begin{enumerate}
		\item[(i)]
		$z_{N}^{N}=z$
		\item[(ii)]
		$L$ is a purely inseparable extension of $C(z_{N})$.
	\end{enumerate}
\end{prop}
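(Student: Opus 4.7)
My plan is to translate the statement into the geometry of smooth projective curves and then use Riemann--Hurwitz in two ways.

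First I would let $L_{s}$ denote the separable closure of $C(z)$ in $L$, and observe that because separability is preserved under $\phi_{d}$ (it sends a polynomial to one obtained by replacing $z$ by $z^{d}$ in the coefficients, which does not affect separability), the endomorphism $\phi_{d}$ restricts to an endomorphism of $L_{s}$. Since $p\nmid d$, the degree $[L:\phi_{d}(L)]=d$ forces $L/\phi_{d}(L)$ to be separable, and one checks analogously that $\phi_{d}|_{L_{s}}$ has degree $d$ on $L_{s}$. Passing to the associated smooth projective curves over $C$, one obtains a diagram
\begin{equation*}
\begin{array}{ccc}
Y & \stackrel{\Phi_{Y}}{\longrightarrow} & Y \\
{\scriptstyle \pi_{Y}}\downarrow & & \downarrow{\scriptstyle \pi_{Y}} \\
\mathbb{P}^{1} & \stackrel{F_{d}}{\longrightarrow} & \mathbb{P}^{1}
\end{array}
\end{equation*}
where $\pi_{Y}$ is separable of some degree $n$, $\Phi_{Y}$ is separable of degree $d$, and $F_{d}(z)=z^{d}$. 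The proof then reduces to showing $Y\cong\mathbb{P}^{1}$ and that $\pi_{Y}$ is (up to a Möbius change of coordinate on $Y$) the map $w\mapsto w^{n}$.

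Next I would apply Riemann--Hurwitz to $\Phi_{Y}:Y\to Y$, which gives $(d-1)(2g_{Y}-2)=-\deg R_{\Phi_{Y}}\leq 0$, so $g_{Y}\leq 1$. To rule out $g_{Y}=1$, I would argue set-theoretically: the equality $\pi_{Y}\circ\Phi_{Y}=F_{d}\circ\pi_{Y}$ gives $\Phi_{Y}^{-1}(\pi_{Y}^{-1}(0))=\pi_{Y}^{-1}(0)$, so $\Phi_{Y}$ preserves this nonempty finite set; if $g_{Y}=1$ then Riemann--Hurwitz forces $\Phi_{Y}$ to be unramified, hence each fibre has exactly $d$ distinct points, giving $d|\pi_{Y}^{-1}(0)|=|\pi_{Y}^{-1}(0)|$, a contradiction. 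Thus $g_{Y}=0$ and $Y\cong\mathbb{P}^{1}$.

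Writing $\pi_{Y}$ as a rational function $z=h(w)$ of degree $n$ and $\Phi_{Y}$ as $w\mapsto k(w)$ of degree $d$, the relation $h\circ k=h^{d}$ gives the divisor identities $k^{\ast}(D_{0})=d\,D_{0}$ and $k^{\ast}(D_{\infty})=d\,D_{\infty}$, where $D_{0}=h^{\ast}(0)$ and $D_{\infty}=h^{\ast}(\infty)$. These identities force $\mathrm{supp}(D_{0})$ and $\mathrm{supp}(D_{\infty})$ to be both forward- and backward-invariant under $k$, so summing ramification indices of $k$ over each support yields
\begin{equation*}
\sum_{q\in\mathrm{supp}(D_{0})}(e_{q}(k)-1)=(d-1)|\mathrm{supp}(D_{0})|,
\end{equation*}
and analogously for $D_{\infty}$. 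Comparing with the total ramification $2d-2$ of $k$ from Riemann--Hurwitz, and using that $\mathrm{supp}(D_{0})$ and $\mathrm{supp}(D_{\infty})$ are disjoint, I would obtain $|\mathrm{supp}(D_{0})|+|\mathrm{supp}(D_{\infty})|\leq 2$; since both are nonempty, each has cardinality exactly one.

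Finally, $h$ has a unique zero and a unique pole, so a Möbius change of coordinate on $Y$ brings $h$ to the form $h(w)=cw^{n}$ for some $c\in C^{\times}$, and the functional equation $h\circ k=h^{d}$ then forces $k(w)=c''w^{d}$. Setting $z_{N}=c^{1/n}w$ (legitimate since $C$ is algebraically closed) and $N=n$ gives $z_{N}^{N}=z$ and $L_{s}=C(w)=C(z_{N})$, whence $L$ is purely inseparable over $C(z_{N})$. The step I expect to be most delicate is the ruling out of $g_{Y}=1$ and the rigidity argument forcing $|\mathrm{supp}(D_{0})|=|\mathrm{supp}(D_{\infty})|=1$; both rest on the hypothesis $p\nmid d$, which guarantees that $\Phi_{Y}$ remains separable of degree $d$ so that Riemann--Hurwitz and the fibre-counting arguments actually give nontrivial constraints.
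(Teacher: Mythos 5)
Your proof is correct and follows essentially the same route as the paper: pass to the separable closure, translate everything into morphisms of smooth projective curves, and use Riemann--Hurwitz to force genus $0$ and to show that the fibres of $\pi_{Y}$ over $0$ and $\infty$ are singletons, after which a M\"obius normalisation gives $z=cw^{n}$ and the conclusion. The only (harmless) deviations are that you rule out $g\geq 2$ directly from the sign of $2g-2$ in Riemann--Hurwitz rather than via Schmid's finiteness of automorphisms for $g\geq 2$, and that you finish with the divisor identity $k^{\ast}(D_{0})=d\,D_{0}$ instead of solving the functional equation $P(u^{d})=P(u)^{d}$.
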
 
	
	\begin{proof}[Proof of Proposition \ref{l_prop}]
		We still write $\phi_{d}$ to denote its extension to $L$. Let $E$ denote the separable closure of $C(z)$ in $L$. Then, we see that for all $x\in E$, $\phi_{d}(x)\in E$. Hence, $\phi_{d}$ induces a field endomorphism of $E$. Let $X$ denote a smooth projective curve whose function field is $E$ (see for example \cite[I.6]{Hart}). Let $j : \mathbb{P}^{1}(C) \rightarrow \mathbb{P}^{1}(C)$ be the morphism of curves associated with $\phi_{d} : C(z)\rightarrow C(z)$, $f : X\rightarrow X$ the morphism of curves associated with the extension of $\phi_{d}$ to $E$, and $\varphi : X \rightarrow \mathbb{P}^{1}(C)$ the morphism of curves associated with the inclusion $i : C(z) \hookrightarrow E$.
	Then, we have the following commutative diagram.
			
		 \begin{figure}[H]
		 
		 	\begin{center}
		 		\begin{tikzpicture} 
		 	
		 		\node(X) at (0,2) {$X$}; 
		 		\node(X') at (2,2) {$X$}; 
		 		
		 		\node(p1) at (0,0) {$\mathbb{P}^{1}(C)$};
		 		\node(p1') at (2,0) {$\mathbb{P}^{1}(C)$};
		 		\node(g1)[scale=0.8] at (0,-0.6) {$(x_{1},x_{2})$};
		 		\node(g2)[scale=0.8] at (2.25,-0.6) {$\left(1,\left(\frac{x_{2}}{x_{1}}\right)^{d}\right)$};

		 		\node(j1)[scale=0.7] at (-1.5,0) {$\left(1,\left(\frac{x_{2}}{x_{1}}\right)\right)$};
		 		\node(j2)[scale=0.7] at (-1.5,2) {$(x_{1},x_{2})$};

		 		\node(j1')[scale=0.7] at (3.5,0) {$\left(1,\left(\frac{x_{2}}{x_{1}}\right)\right)$};
		 		\node(j2')[scale=0.7] at (3.5,2) {$(x_{1},x_{2})$};
		 		
		 		\node(j)[scale=0.7] at (-1.3,1) {$\varphi$};
		 		\node(j')[scale=0.7] at (3.3,1) {$\varphi$};
		 		\node(f)[scale=0.7] at (1,2.3) {$f$};
		 		\node(g3)[scale=0.7] at (1,-0.8) {$j$};
		 		
		 		\node(D) at (5,1) {(D)};
		 		\draw[->] (g1) -- (g2);
		 		\draw[->] (X) -- (X');
		 		\draw[->] (p1) -- (p1');
		 		\draw[->] (X) -- (p1);
		 		\draw[->] (X') -- (p1');
		 		\draw[->] (j2) -- (j1);
		 		\draw[->] (j2') -- (j1');
		 		
		 		\end{tikzpicture}
		 	\end{center}
		 \end{figure}
		
		Now, we prove that $f$ satisfies the following properties.
		\begin{enumerate}
			\item 
			$f$ is a separable morphism, that is $E/\phi_{d}(E)$ is a separable extension.
		\item
			$f$ has degree $d$.
			\item
			$f$ is totally ramified above any point of $\varphi^{-1}(0)\cup \varphi^{-1}(\infty)$.
		\end{enumerate} 
		
	To prove the first assertion, it suffices to show that $E/\phi_{d}(C(z))$ is separable. But $\phi_{d}(C(z))=C(z^{d})$. As $p\nmid d$, $C(z)/C(z^{d})$ is separable. Besides, by definition, $E/C(z)$ is separable. Assertion 1 follows. The second assertion can be read on the diagram (D). We get $\deg(f)\deg(\varphi)=\deg(\varphi)\deg(j)$. But $\deg(j)=[C(z):\phi_{d}(C(z))]=d$. Finally, let us prove the last assertion. By diagram (D), we get $f^{-1}(\varphi^{-1}(0))=\varphi^{-1}(0)$. Besides, as $X$ is a smooth projective curve, by \cite[II, 6.7, 6.8, Exercise 3.5]{Hart}, the set $\varphi^{-1}(0)$ is finite. Let $x\in \varphi^{-1}(0)$. We deduce that the set $f^{-1}(x)$ has exactly one element. Now, it follows from \cite[II, Proposition 2.6]{Silverman} that $f$ is totally ramified above $x$. The same arguments hold for $\varphi^{-1}(\infty)$ and Assertion 3 is proved. Now, let $g$ be the genus of $X$. We prove that $g\in\{0,1\}$. First, let us recall the Hurwitz formula (see for example \cite[IV.2.4]{Hart}). If $\vartheta : W\rightarrow W$ is a finite separable morphism of curves, we have 
	\begin{equation}
	\label{hurwitz}
	-2(g(W)-1)(n(\vartheta)-1)\geq \sum_{P\in W}(e_{P}-1),
	\end{equation}
	
	\noindent where the integer $g\geq 0$ is the genus of the curve $W$, the integer $n(\vartheta)\geq 1$ is the degree of $\vartheta$ and the integer $e_{P}\geq 1$ is the ramification index of $\vartheta$ at $P$. Now, if $g\notin\{0,1\}$, it follows from Hurwitz formula \eqref{hurwitz} that all the compositions $f^{i}(z), i\geq 0$, are automorphisms of the smooth projective curve $X$. But H. L. Schmid proved \cite{Schmid} that there only exist finitely many automorphisms of $X$, when $g\geq 2$ (see also \cite{Singh}). As $\phi_{d}$ has infinite order, it is the same for $f(z)$ and we get a contradiction. Hence, $g\in\{0,1\}$. But if $g=1$, it follows from Hurwitz formula \eqref{hurwitz} that $f$ is unramified everywhere. This contradicts Assertion 3. Hence, $g=0$. Now, our goal is to prove the following lemma.
	
	\begin{lem}
		\label{diag_com}
		There exists a transcendental element $u$ over $C$ such that $E=C(u)$. Moreover, there exists $P(u)\in C(u)$ such that the following diagram commutes.
		\begin{figure}[H]
			\begin{center}
				\begin{tikzpicture} 
				
				\node(E) at (0,2) {$C(u)$}; 
				\node(E') at (2,2) {$C(u)$}; 
				
				\node(C) at (0,0) {$C(z)$};
				\node(C') at (2,0) {$C(z)$};
				\node(phi1)[scale=0.8] at (0.25,-0.5) {$z^{d}$};
				\node(phi2)[scale=0.8] at (1.7,-0.5) {$z$};
				
				\node(v1)[scale=0.8] at (1.8,2.5) {$u$};
				\node(v2)[scale=0.8] at (0.33,2.5) {$u^{d}$};
				
				\node(j1)[scale=0.8] at (-1,2) {$P(u)$};
				\node(j2)[scale=0.8] at (-1,0) {$z$};
				\node(j1')[scale=0.8] at (3,0) {$z$};
				\node(j2')[scale=0.8] at (3,2) {$P(u)$};
				
				\node(j3)[scale=0.8] at (-0.7,1) {$h_{1}$};
				\node(j3')[scale=0.8] at (2.7,1) {$h_{1}$};
				\node(v3)[scale=0.8] at (1,2.8) {$h_{2}$};
				\node(phi3)[scale=0.8] at (1,-0.8) {$\phi_{d}$};
				
				\draw[->] (phi2) -- (phi1);
				\draw[->] (v1) -- (v2);
				\draw[->] (E') -- (E);
				\draw[->] (C') -- (C);
				\draw[->] (C') -- (E');
				\draw[->] (C) -- (E);
				\draw[->] (j2) -- (j1);
				\draw[->] (j1') -- (j2');
				
				\node(D2) at (5,1) {($D_{2}$)};
				
				\end{tikzpicture}
			\end{center}
		\end{figure}
		
%		
%			\begin{figure}[H]
%				\begin{center}
%					\begin{tikzpicture} 
%					
%					\node(E) at (0,2) {$C(u)$}; 
%					\node(E') at (2,2) {$C(u)$}; 
%					
%					\node(C) at (0,0) {$C(z)$};
%					\node(C') at (2,0) {$C(z)$};
%					\node(phi)[scale=0.8] at (1,-0.3) {$z^{d}\leftarrow z$};
%					\node(v)[scale=0.8] at (1,1.8) {$u^{d}\leftarrow u$};
%					
%					\node(j1)[scale=0.8] at (-0.7,0.5) {$z$};
%					\node(j2)[scale=0.8] at (-0.7,1.5) {$P(u)$};
%					\node(j1')[scale=0.8] at (2.7,0.5) {$z$};
%					\node(j2')[scale=0.8] at (2.7,1.5) {$P(u)$};
%					
%					\node(j)[scale=0.8] at (-1.3,1) {$\left(\varphi c^{-1}\right)^{*}$};
%					\node(j')[scale=0.8] at (3.3,1) {$\left(\varphi c^{-1}\right)^{*}$};
%					\node(g)[scale=0.8] at (1,1.4) {$\left(cfc^{-1}\right)^{*}$};
%					\node(ph)[scale=0.8] at (1,-0.5) {$\phi_{d}$};
%					
%					\draw[->] (E') -- (E);
%					\draw[->] (C') -- (C);
%					\draw[->] (C') -- (E');
%					\draw[->] (C) -- (E);
%					\draw[->] (j1) -- (j2);
%					\draw[->] (j1') -- (j2');
%					
%					\end{tikzpicture}
%				\end{center}
%			\end{figure}
%		
		\end{lem} 
		
		\begin{proof}[Proof of Lemma \ref{diag_com}]
		
		As $g=0$, $X$ and $\mathbb{P}^{1}(C)$ are birationally equivalent (see \cite[IV,
		.1.3.5]{Hart}). By \cite[I.4.5]{Hart}, $E$ and $C(z)$ are isomorphic as $C$-algebras. Hence
		$$E=C(u),$$
		where $u\in E$ is transcendental over $C$.
		Now, we are going to express the morphisms $h_{1}$ and $h_{2}$ of diagram $(D_{2})$ with respect to the function fields morphisms associated with the morphisms of curves of diagram (D). As to start, applying Hurwitz formula \eqref{hurwitz} to $f$, we get that the sets $\varphi^{-1}(0)$ and $\varphi^{-1}(\infty)$ have respectively exactly one element, denoted $a$ and $b$, and that the following property is satisfied.
		
		\begin{enumerate}\setcounter{enumi}{3}
			\item
			$f$ is unramified outside $\{a,b\}$.
		\end{enumerate}
		 
		On the other hand, we notice that the following properties characterize the morphism of curves $\tilde{h_{2}} : X \longrightarrow X$ associated with the function field morphism $h_{2}$ of diagram $(D_{2})$ (we identify $0,\infty$ of $\mathbb{P}^{1}(C)$ with the corresponding elements of $X$ via birational equivalence).
		
		\begin{enumerate}[(i)]
			\item
			$\tilde{h_{2}}(0)=0$, $\tilde{h_{2}}(\infty)=\infty$.
			\item
			$\tilde{h_{2}}$ has degree $d$.
			\item
			$\tilde{h_{2}}$ is totally ramified at $0$ and $\infty$. 
			\item
			$\tilde{h_{2}}$ is unramified outside $\{0,\infty\}$.
		\end{enumerate}	
		
		These assertions are exactly the assertions 1-4 satisfied by $f$, except that $\{a,b\}$ is replaced by $\{0,\infty\}$. To correct it, we consider an automorphism $c$ of $X$ such that
		
		\begin{align}
		c : X &\rightarrow X\\
		a&\mapsto 0\\
		b&\mapsto \infty.
		\end{align}

	From now on, if $h$ is a morphism of curves, $h^{*}$ denotes the associated morphism of function fields. We deduce from properties 1-4 of $f$ that the morphism  $cfc^{-1}$ satisfies properties (i)-(iv). Hence $h_{2}=(cfc^{-1})^{*}$. Now, let $h_{1}=\left(\varphi c^{-1}\right)^{*}$ and $P(u)\in C(u)$ be such that $\left(\varphi c^{-1}\right)^{*}(z)=P(u)$. 
	
	By Diagram (D), we get the following commutative diagram.
	
	 \begin{figure}[H]
	 	\begin{center}
	 		\begin{tikzpicture} 
	 		
	 		\node(X) at (0,2) {$X$}; 
	 		\node(X') at (2,2) {$X$}; 
	 		
	 		\node(p1) at (0,0) {$\mathbb{P}^{1}(C)$};
	 		\node(p1') at (2,0) {$\mathbb{P}^{1}(C)$};
	 		\node(g1)[scale=0.8] at (0,-0.6) {$(x_{1},x_{2})$};
	 		\node(g2)[scale=0.8] at (2.25,-0.6) {$\left(1,\left(\frac{x_{2}}{x_{1}}\right)^{d}\right)$};
	 	
	 		\node(j)[scale=0.7] at (-0.5,1.2) {$\varphi c^{-1}$};
	 		\node(j')[scale=0.7] at (2.5,1.2) {$\varphi c^{-1}$};
	 		\node(cfc)[scale=0.7] at (1,2.3) {$cfc^{-1}$};
	 		\node(g3)[scale=0.7] at (1,-0.8) {$j$};
	 		
	 		\draw[->] (g1) -- (g2);
	 		\draw[->] (X) -- (X');
	 		\draw[->] (p1) -- (p1');
	 		\draw[->] (X) -- (p1);
	 		\draw[->] (X') -- (p1');
	 		
	 		\end{tikzpicture}
	 	\end{center}
	 \end{figure}

	Lemma \ref{diag_com} follows by considering the associated morphisms of function fields.
\end{proof}

	We are now able to conclude the proof of Proposition \ref{l_prop}. Let us read Diagram $(D_{2})$. On the one hand, we obtain
		$$h_{2}\circ h_{1}(z)=P(u^{d}),$$
		and on the other hand
		$$h_{1}\circ\phi_{d}(z)=P(u)^{d}.$$
		Then $P(u^{d})=P(u)^{d}$. But $p\nmid d$. Hence 
		$$P(u)=\lambda u^{N},$$
		where $N\in \mathbb{Z}$ and $\lambda^{d}=\lambda\in C$.
		
		Now, let $c_{1}=\left(c^{-1}\right)^{*}$ denote the function field automorphism associated with $c^{-1}$. We have
		$$h_{1}=\left(\varphi c^{-1}\right)^{*}=c_{1}\varphi^{*}=c_{1}i.$$
		Let us set
		$$i(z)=z=Q(u),$$
		where $Q(u)\in C(u)$.
		Then, we get
		\begin{align*}
		\lambda u^{N} &=h_{1}(z)\\
		&= c_{1}i(z)\\
		&=c_{1}(Q(u)).
		\end{align*}
		Hence
		\begin{align*}
		z&=Q(u)\\
		&=c_{1}^{-1} \left(\lambda u^{N}\right)\\
		&=\lambda \left(c_{1}^{-1}(u)\right)^{N}.
		\end{align*}
		Now, let $\mu$ be a $N$-th root of $\lambda$ in $C$. Let us set
		$$\begin{cases} z_{N}&=\mu c_{1}^{-1}(u) \text{, if } N\geq 0\\
		&=1/(\mu c_{1}^{-1}(u)) \text{ otherwise}.\end{cases}$$
		In both cases, $z_{N}\in E$ and we obtain
		$$z=z_{N}^{|N|},$$
	    and
		$$E=C(u)=C(z_{N}).$$
		Finally, as $E$ is the separable closure of $C(z)$ in $L$, $L$ is a purely inseparable extension of $C(z_{N})$. Proposition \ref{l_prop} is proved. 
		\end{proof}
		We deduce the analogue of Corollary 5 of \cite{JR}.
		
		\begin{cor}
			\label{l_cor}
			Let $L\subset C((z))$ be a finite extension of $C(z)$. Let $d\geq 2$ be an integer such that $p\nmid d$. We assume that the endomorphism $\phi_{d}$ of $C(z)$ defined by $\phi_{d}(P(z))=P(z^{d})$ extends to a field endomorphism of $L$. Then, $L$ is a purely inseparable extension of $C(z)$.
		\end{cor}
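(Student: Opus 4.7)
The plan is to deduce Corollary \ref{l_cor} directly from Proposition \ref{l_prop} by exploiting the embedding $L\subset C((z))$. Applying the proposition furnishes a positive integer $N$ together with an element $z_{N}\in L$ satisfying $z_{N}^{N}=z$ and such that $L$ is purely inseparable over $C(z_{N})$. It thus suffices to prove that $N=1$, for then $z_{N}=z$, $C(z_{N})=C(z)$, and the conclusion over $C(z_{N})$ given by Proposition \ref{l_prop} becomes the desired conclusion over $C(z)$.

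To show $N=1$, I would use the $z$-adic valuation $v$ on the Laurent series field $C((z))$, which takes values in $\mathbb{Z}$ and satisfies $v(z)=1$. Since by hypothesis $L\subset C((z))$, the element $z_{N}$ admits a Laurent expansion in $C((z))$, so $v(z_{N})\in\mathbb{Z}$. Applying $v$ to the identity $z_{N}^{N}=z$ yields
\[
N\cdot v(z_{N})=v(z)=1
\]
in $\mathbb{Z}$. As $N\geq 1$, this forces $N=1$ (and $v(z_{N})=1$), which completes the argument.

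There is essentially no obstacle here: once Proposition \ref{l_prop} is granted, the corollary reduces to the elementary observation that the valuation group of $C((z))$ is $\mathbb{Z}$, so $z$ admits no proper $N$-th root inside $C((z))$. The role of the embedding $L\subset C((z))$ is precisely to forbid the Puiseux-type extensions $C(z^{1/N})$ with $N>1$ that would otherwise appear as a genuine separable contribution in the conclusion of Proposition \ref{l_prop}.
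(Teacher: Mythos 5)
Your proposal is correct and follows exactly the paper's route: the paper deduces the corollary from Proposition \ref{l_prop} by observing that $z_N^N=z$ with $z_N\in L\subset C((z))$ forces $N=1$ and $z_N=z$. Your valuation-theoretic justification (applying the $\mathbb{Z}$-valued $z$-adic valuation to get $N\cdot v(z_N)=1$) simply makes explicit the step the paper leaves implicit.
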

		
		\begin{proof}[Proof of Corollary \ref{l_cor}]
		We have $z_{N}^{N}=z$, with $z_{N}\in L\subset C((z))$. Hence, $N=1$, $z_{N}=z$ and Corollary \ref{l_cor} is proved.	
		\end{proof}
		
		We are now able to prove Theorem \ref{l_mahler_bis}. To do so, we use here Cartier operators (see Proposition \ref{Cartier}).
		
\begin{proof}[Proof of Theorem \ref{l_mahler_bis}]
		
Let us assume that $f(z)$ is algebraic over $\overline{\mathbb{K}}(z)$. Let \eqref{eq_inh_min} be the minimal inhomogeneous equation of $f(z)$ and set 
$$L=C(z)\left(f(z), \ldots, f\left(z^{d^{m-1}}\right)\right).$$
We note that, for all $l\geq 0$, $f\left(z^{d^{l}}\right)$ is algebraic over $\overline{\mathbb{K}}(z)$. It follows that $L$ is a finite extension of $C(z)$.
Besides, $d$-Mahler Equation \eqref{eq_inh_min} guarantees that $\phi_{d}$ induces a field endomorphism of $L$. Then, by Corollary \ref{l_cor}, $L$ is a purely inseparable extension of $C(z)$. We deduce that there exists an integer $s$ such that $f(z)^{p^{s}}\in C(z)$. Hence, there exist non-zero polynomials $A(z), B(z)\in C[z]$ such that
\begin{equation}
\label{B_cartier}
B(z)f(z)^{p^{s}}=A(z)
\end{equation}
	
Now, let us choose for every integer $j\in\{1,\ldots, s\}$ a Cartier operator $\Lambda^{(j)}$ such that $$\Lambda^{(s)}\circ\cdots\circ\Lambda^{(1)}(B(z))\neq 0.$$
We apply $\Lambda:=\Lambda^{(s)}\circ\cdots\circ\Lambda^{(1)}$ to \eqref{B_cartier} and get
$$\Lambda(B(z))f(z)=\Lambda(A(z)).$$
Then, $f(z)\in C(z)\cap \mathbb{K}\{z\}=\mathbb{K}(z)$ and Theorem \ref{l_mahler_bis} is proved.		
\end{proof}

	\begin{proof}[Proof of Corollary \ref{l_mahler}]
	Let $L=\overline{\mathbb{K}}(z)(f_{1}(z), \ldots, f_{n}(z))$ be a $d$-Mahler extension over $\overline{\mathbb{K}}(z)$. Without any loss of generality, we can assume that $(f_{1}(z), \ldots, f_{n}(z))$ is a solution vector of System \eqref{syst_gen}. Indeed, if not, we can insert the $f_{i}(z)$ into a solution vector $$\underline{g}(z)=(f_{1}(z), \ldots, f_{n}(z), g_{n+1}(z),\ldots, g_{N}(z))$$ of a $d$-Mahler system of size $N\geq n$. Then, we have
	$$L\subseteq\overline{\mathbb{K}}(z)(\underline{g}(z)).$$ 
	Thus, if we prove that $\overline{\mathbb{K}}(z)(\underline{g}(z))$ is regular over $\overline{\mathbb{K}}(z)$, it follows from \cite[Corollary A1.6]{Eisenbud} that $L$ is regular over $\overline{\mathbb{K}}(z)$. Now, by Proposition \ref{tjrs_sep}, $L$ is separable over $\overline{\mathbb{K}}(z)$. It thus remains to prove that every element of $L$ that is algebraic over $\overline{\mathbb{K}}(z)$ belongs to $\overline{\mathbb{K}}(z)$. To do so, we follow the same approach as in \cite{A-F}.
	Let $E$ be the algebraic closure of $\overline{\mathbb{K}}(z)$ in $L$ and $f(z)\in E$. Our aim is to prove that $f(z)$ is $d$-mahler and apply Theorem \ref{l_mahler_bis}. First, it follows from System \eqref{syst_gen} that for all $l\geq 0$, $f(z^{d^{l}})\in L$. Then, the fact that $f(z)\in E$ implies that for all $l\geq 0$, $f(z^{d^{l}})\in E$. Now, it suffices to prove that $E$ is a finite extension of $\overline{\mathbb{K}}(z)$.
	As $L$ is a finitely generated $\overline{\mathbb{K}}(z)$-algebra, the sub-extension $E$ has the same property (see for example \cite[VIII, Exercise 4]{Lang}). But $E$ is also an algebraic extension of $\overline{\mathbb{K}}(z)$. Hence, $E$ is a finite extension of $\overline{\mathbb{K}}(z)$. It follows that $f(z)$ is $d$-mahler. Thus, by Theorem \ref{l_mahler_bis}, $f(z)\in \mathbb{K}(z)$ and Corollary \ref{l_mahler} is proved.
	\end{proof} 

\section{Proof of Corollary \ref{cor_important}}
\label{cor}
We prove here Corollary \ref{cor_important}.

\begin{proof}[Proof of Corollary \ref{cor_important}]
Let us keep the assumptions of Corollary \ref{cor_important}. Let \eqref{syst_inh} be the minimal inhomogeneous system satisfied by $f(z)$. Let us prove the first assertion. Let us assume that $f(\alpha)\in\overline{\mathbb{K}}$. For all $i\in\{1,\ldots,m\}$, let us set
$$f_{i}(z)=f(z^{d^{i-1}}).$$
There exists an integer $l\geq 0$ such that $\alpha^{d^{l}}$ is regular for $d$-Mahler System \eqref{syst_inh}. Therefore, by Theorem \ref{phil_p} and minimality of \eqref{syst_inh}, the numbers $1,f_{1}(\alpha^{d^{l}}),\ldots, f_{m}(\alpha^{d^{l}})$ are linearly independent over $\overline{\mathbb{K}}$. Moreover, we can write
\begin{equation}
\label{syst_bis}
\begin{pmatrix}
1\\f_{1}(z) \\ \vdots \\ f_{m}(z) 
\end{pmatrix} =A_{l}(z)\begin{pmatrix}
1\\f_{1}(z^{d^{l}}) \\ \vdots \\ f_{m}(z^{d^{l}}) 
\end{pmatrix},
\end{equation}
where 
$$A_{l}(z)=A^{-1}(z)A^{-1}(z^{d})\ldots A^{-1}\left(z^{d^{l-1}}\right).$$
Then, we can see that $\alpha$ is not a pole of the matrix $A_{l}(z)$ of System \eqref{syst_bis}. Indeed, otherwise, let $r$ denote the maximum of the order of $\alpha$ as a zero of the denominators of the coefficients of $A_{l}(z)$. We get 
\begin{equation}
\label{mat_ss_pole}
(z-\alpha)^{r}\begin{pmatrix}
1\\f_{1}(z) \\ \vdots \\ f_{m}(z) 
\end{pmatrix} =B_{l}(z)\begin{pmatrix}
1\\f_{1}(z^{d^{l}}) \\ \vdots \\ f_{m}(z^{d^{l}}) 
\end{pmatrix},
\end{equation}
where $B_{l}(z)=(z-\alpha)^{r}A_{l}(z)$ has no pole at $\alpha$ and is such that $B_{l}(\alpha)\neq 0$. Then, setting $z=\alpha$ in \eqref{mat_ss_pole}, we would find a linear non-trivial relation between the numbers $1,f_{1}(\alpha^{d^{l}}),\ldots, f_{m}(\alpha^{d^{l}})$ which contradicts the fact that they are linearly independent over $\overline{\mathbb{K}}$. Now, if we set
$$\Lambda=\begin{pmatrix}
f(\alpha) & -1 & 0 & \ldots & 0
\end{pmatrix},$$
we obtain
\begin{align*}
0&=\Lambda\begin{pmatrix}
1\\f_{1}(\alpha) \\ \vdots \\ f_{m}(\alpha) 
\end{pmatrix}\\
&=\Lambda A_{l}(\alpha)\begin{pmatrix}
1\\f_{1}(\alpha^{d^{l}}) \\ \vdots \\ f_{m}(\alpha^{d^{l}})\end{pmatrix}. 
\end{align*}
Then, the fact that $1,f_{1}(\alpha^{d^{l}}),\ldots, f_{m}(\alpha^{d^{l}})$ are linearly independent over $\overline{\mathbb{K}}$ implies that
\begin{equation}
\label{rel_mat}
\Lambda A_{l}(\alpha)=0.
\end{equation}
Now, as the first coordinate of the solution vector of System \eqref{syst_bis} is $1$, there exists a column vector $\begin{pmatrix}
u_{0}\\u_{1} \\ \vdots \\ u_{m} 
\end{pmatrix}\in\mathbb{K}(\alpha)^{m+1}$ of $A_{l}(\alpha)$ such that $u_{0}\neq 0$. Then, by \eqref{rel_mat} we get
$$f(\alpha)=\frac{u_{1}}{u_{0}}\in\mathbb{K}(\alpha).$$

Let us prove the second statement. If $\alpha$ is a regular number for System \eqref{syst_inh}, let us assume by contradiction that $f(\alpha)$ is algebraic over $\overline{\mathbb{K}}$, that is $f(\alpha)\in\overline{\mathbb{K}}$. Then, the numbers $1,f(\alpha)$ are linearly dependent over $\overline{\mathbb{K}}$ and hence, the numbers $1,f(\alpha),\ldots, f\left(\alpha^{d^{m-1}}\right)$ are linearly dependent over $\overline{\mathbb{K}}$. Then,  Theorem \ref{phil_p} implies that there exists a linear relation between the functions $1,f(z), \ldots, f\left(z^{d^{m-1}}\right)$ over $\overline{\mathbb{K}}(z)$. This contradicts the minimality of Equation \eqref{eq_inh_min} and proves that $f(\alpha)$ is transcendental over $\overline{\mathbb{K}}$.

\end{proof}

\begin{rem}
\em{
	In order to prove the first statement of Corollary \ref{cor_important}, we showed the existence of an integer $l\geq 0$ and a matrix $B(z)\in\text{GL}_{m+1}(\mathbb{K}(z))$ such that
	\begin{equation}
	\label{syst_rem}
	\begin{pmatrix}
	1\\f_{1}(z) \\ \vdots \\ f_{m}(z) 
	\end{pmatrix} =B(z)\begin{pmatrix}
	1\\f_{1}(z^{d^{l}}) \\ \vdots \\ f_{m}(z^{d^{l}}) 
	\end{pmatrix},
	\end{equation}
and
\begin{enumerate}
	\item 
	the number $\alpha^{d^{l}}$ is regular for System \eqref{syst_rem},
	\item
	the number $\alpha$ is not a pole of $B(z)$.
\end{enumerate}
This is the analogue of \cite[Theorem 1.5]{Beukers} for $E$-functions and of \cite[Théorème 1.10]{A-F} in the particular case of linearly independent Mahler functions.}
\end{rem}

\section{Examples}
\label{Examples}
In this section, we illustrate Theorem \ref{phil_p} and provide examples, in the case where $p\mid d$, of regular and non-regular $d$-Mahler extensions.

\subsection{An application of Theorem \ref{phil_p}}

Let $d$ be an integer such that $p\nmid d$. 
Let us consider the system

\begin{equation}
\label{syst_ex_1}
\begin{pmatrix}
f_{1}(z^{d}) \\ f_{2}(z^{d}) \\ f_{3}(z^{d})\end{pmatrix}\begin{pmatrix}
1 & 0 & 0 \\
0 & 0 & 1\\
z^{d^{2}}-z & 1 & -1
\end{pmatrix}\begin{pmatrix}
f_{1}(z) \\ f_{2}(z) \\ f_{3}(z)\end{pmatrix}.
\end{equation}

Let us set
\begin{equation}
\label{definition_a}
a(z)=z+\sum_{n=1}^{+\infty}F_{n}z^{d^{n}},
\end{equation}
where $F_{n}$ is the residue modulo $d$ of the $n$-th Fibonacci number (with $F_{1}=1, F_{2}=1$). Then, System \eqref{syst_ex_1} is given by
\begin{equation}
\label{l_a}
f_{1}(z)=1, f_{2}(z)=a(z), f_{3}(z)=a(z^{d}).
\end{equation}

 By Corollary \ref{l_mahler}, the $d$-Mahler extension $\mathcal{E}=\overline{\mathbb{K}}(z)(f_{1}(z),f_{2}(z),f_{3}(z))$ is regular over $\overline{\mathbb{K}}(z)$.
The advantage of Theorem \ref{phil_p} is that we do not have to study algebraic relations between $f_{1}(z)$, $f_{2}(z)$, $f_{3}(z)$ to get the following result.

\begin{prop}
	\label{illustr_ind_lin}
	Let $\alpha\in\overline{\mathbb{K}}$, $0<|\alpha|<1$. Then, $1,a(\alpha),a(\alpha^{d})$ are linearly independent over $\overline{\mathbb{K}}$. 
\end{prop}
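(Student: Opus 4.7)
The plan is to reduce Proposition \ref{illustr_ind_lin} to the functional linear independence of $1, a(z), a(z^d)$ over $\overline{\mathbb{K}}(z)$ via Corollary \ref{cor_direct}. The hypotheses of that corollary are met: regularity of $\mathcal{E}$ is given by Corollary \ref{l_mahler} since $p \nmid d$, while the matrix of System \eqref{syst_ex_1} is polynomial with constant determinant $-1$, so every $\alpha$ with $0 < |\alpha| < 1$ is a regular number for the system.

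The first step is to show that $a(z)$ is transcendental over $\overline{\mathbb{K}}(z)$. By Theorem \ref{l_mahler_bis}, it suffices to rule out $a(z) \in \mathbb{K}(z)$. Assuming $a(z) = P(z)/Q(z)$ with $Q(0) \neq 0$, comparing the coefficient of $z^{d^k}$ in $Q(z) a(z) = P(z)$ for $k$ large forces $Q(0) F_k = 0$; but $F_k$ satisfies a Fibonacci-type recurrence in $\overline{\mathbb{K}}$ with $F_1 = F_2 = 1$, so it is not eventually zero, since two consecutive vanishings would force all terms to vanish via the backward recurrence $F_{n-1} = F_{n+1} - F_n$. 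This yields $Q(0) = 0$, and a straightforward factorization of powers of $z$ propagates the contradiction.

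For the main step, assume a dependency $P_0(z) + P_1(z) a(z) + P_2(z) a(z^d) = 0$ with $P_i \in \overline{\mathbb{K}}[z]$ not all zero. Transcendence of $a(z)$ rules out $P_1 = 0$ or $P_2 = 0$. Substituting $z \mapsto z^d$ and using the Mahler identity $a(z^{d^2}) = a(z) - a(z^d) + z^{d^2} - z$ to eliminate $a(z^{d^2})$, then eliminating $a(z^d)$ with the original relation, yields a new dependency involving only $a(z)$. Transcendence of $a$ forces the coefficient of $a(z)$ to vanish, which, setting $R(z) = P_2(z)/P_1(z)$, reads
\[
R(z^d) = \frac{1}{1 + R(z)}.
\]
After clearing common factors of $z$ from $P_0, P_1, P_2$, the value $R(0)$ is well-defined (a pole of $R$ at $0$ is ruled out by the equation itself), and evaluation gives $R(0) = c$ with $c^2 + c - 1 = 0$. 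Using $1 - c = c^2$ the equation rewrites as
\[
R(z^d) - c = -\frac{c}{1 + R(z)}\bigl(R(z) - c\bigr);
\]
since $1 + c \neq 0$, comparing orders of vanishing of both sides at $z = 0$ forces $R \equiv c$, i.e.\ $P_2 = c P_1$.

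The relation then becomes $a(z) + c\, a(z^d) = -P_0/P_1 \in \overline{\mathbb{K}}(z)$, which expands as
\[
a(z) + c\, a(z^d) = z + (c+1) z^d + \sum_{n \geq 2} (F_n + c F_{n-1}) z^{d^n},
\]
whose coefficient sequence $G_n = F_n + c F_{n-1}$ again satisfies the Fibonacci recurrence, with $G_1 = 1 \neq 0$, and hence is not eventually zero. A repeat of the lacunarity argument used for $a(z)$ rules out rationality of this series, yielding the contradiction and completing the proof. The main technical obstacle is thus the rationality analysis for sparse Fibonacci-twisted series; it is handled uniformly by coefficient comparison together with the fact that a non-trivial Fibonacci-type sequence cannot vanish at two consecutive indices.
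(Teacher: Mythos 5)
Your proposal is correct, and the reduction to functional linear independence is exactly the paper's: Corollary \ref{cor_direct} together with Corollary \ref{l_mahler}, plus the observation (left implicit in the paper) that the matrix of \eqref{syst_ex_1} has constant determinant $-1$, so every $\alpha$ with $0<|\alpha|<1$ is regular. Where you genuinely diverge is in proving that $1,a(z),a(z^{d})$ are linearly independent over $\overline{\mathbb{K}}(z)$ (the paper's Lemma \ref{l_ind_lin_ex}). The paper does this by a short coefficient extraction: writing the putative relation with \emph{coprime} polynomial coefficients and comparing the coefficients of $z^{d^{N}}$ and $z^{d^{N+1}}$ for $N$ large enough that the gaps $d^{N}-d^{N-1}$ exceed all degrees, one obtains a $2\times 2$ system in the constant terms of $P_{0},P_{1}$ whose determinant is the Cassini expression $F_{N}^{2}-F_{N-1}F_{N+1}=(-1)^{N+1}\neq 0$; all constant terms then vanish, contradicting coprimality. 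Your route is longer but sound: you first establish transcendence of $a(z)$ via Theorem \ref{l_mahler_bis} and a lacunarity argument (the paper never needs this for Lemma \ref{l_ind_lin_ex}, though it asserts it elsewhere), then derive and completely solve the functional equation $R(z^{d})=1/(1+R(z))$ for $R=P_{2}/P_{1}$ — your pole/order-of-vanishing analysis at $z=0$ correctly forces $R\equiv c$ with $c^{2}+c-1=0$ — and finally kill the residual relation $a(z)+c\,a(z^{d})\in\overline{\mathbb{K}}(z)$ by a second lacunarity argument. What your approach buys is structural insight: the golden-ratio equation $c^{2}+c=1$ is precisely where the Fibonacci recurrence re-enters, and the fixed-point analysis of the $\sigma_{d}$-equation for $R$ is a self-contained and reusable trick; what it costs is the reliance on Theorem \ref{l_mahler_bis} and on transcendence of $a(z)$, which the paper's direct computation avoids. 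One caveat, shared with the paper: your claim that $G_{n}=F_{n}+cF_{n-1}$ satisfies the Fibonacci recurrence in $\overline{\mathbb{K}}$ requires reading $F_{n}$ as the $n$-th Fibonacci number viewed in $\overline{\mathbb{K}}$, which is not literally the composite "reduce mod $d$, then embed in characteristic $p$" of Definition \eqref{definition_a} when $p\nmid d$; but the paper's own use of the Cassini identity in $\overline{\mathbb{K}}$ rests on the same reading, so the two arguments stand or fall together on this point.
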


By Corollary \ref{cor_direct}, all we have to prove is the following result.

\begin{lem}
	\label{l_ind_lin_ex}
	The functions $f_{1}(z),f_{2}(z),f_{3}(z)$ are linearly independent over $\overline{\mathbb{K}}(z)$.
\end{lem}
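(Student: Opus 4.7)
The plan is to argue by contradiction: assume that there exist $\alpha_{1}(z), \alpha_{2}(z), \alpha_{3}(z) \in \overline{\mathbb{K}}(z)$, not all zero, satisfying $\alpha_{1} + \alpha_{2} a(z) + \alpha_{3} a(z^{d}) = 0$, and derive a contradiction in four steps.

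First I would show that $a(z)$ is transcendental over $\overline{\mathbb{K}}(z)$. By Theorem \ref{l_mahler_bis}, $a(z)$ is either transcendental over $\mathbb{K}(z)$ or lies in $\mathbb{K}(z)$; since $\overline{\mathbb{K}}(z)/\mathbb{K}(z)$ is algebraic, this dichotomy also holds over $\overline{\mathbb{K}}(z)$. To rule out rationality, I would exploit the lacunarity of $a(z)$: if $a = P/Q$ with $Q(0) \neq 0$, then the Taylor coefficients $(c_{k})$ satisfy a linear recurrence of order $r = \deg Q$ for all large $k$. But $c_{k} = 0$ unless $k \in \{1\} \cup \{d^{n} : n \geq 1\}$, and infinitely many $c_{d^{n}} = F_{n}$ are nonzero in $\overline{\mathbb{K}}$ (the Pisano period of Fibonacci modulo $d$ contains $F_{1} = F_{2} = 1$, which remain nonzero in $\mathbb{F}_{p}$). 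Choosing $n$ with $d^{n} - d^{n-1} > r$, the $r$ consecutive positions $d^{n} - 1, \ldots, d^{n} - r$ all lie strictly between $d^{n-1}$ and $d^{n}$, hence give zero coefficients, and the recurrence forces $c_{d^{n}} = 0$, contradicting $F_{n} \neq 0$.

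Next, the cases $\alpha_{3} = 0$ or $\alpha_{2} = 0$ would yield rationality of $a(z)$ or $a(z^{d})$, contradicting Step~1. So $\alpha_{3} \neq 0$ and I may write $a(z^{d}) = \beta(z) + \gamma(z) a(z)$ with $\beta, \gamma \in \overline{\mathbb{K}}(z)$. From System \eqref{syst_ex_1}, $a$ satisfies the Mahler identity $a(z^{d^{2}}) = z^{d^{2}} - z + a(z) - a(z^{d})$. Substituting the displayed relation into this identity, and also into the shifted version $a(z^{d^{2}}) = \beta(z^{d}) + \gamma(z^{d}) a(z^{d})$, then equating the two expressions and using the transcendence of $a(z)$ to separate the coefficient of $a(z)$ from the constant term, I get
\begin{align*}
\gamma(z)\bigl(1 + \gamma(z^{d})\bigr) &= 1, \\
\beta(z^{d}) + \bigl(1 + \gamma(z^{d})\bigr) \beta(z) &= z^{d^{2}} - z.
\end{align*}

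Third, I would show that the first equation forces $\gamma$ to be a constant $c \in \overline{\mathbb{K}}^{\times}$ with $c^{2} + c - 1 = 0$. Writing $\gamma = P/Q$ with coprime $P, Q \in \overline{\mathbb{K}}[z]$, clearing denominators yields $P(z) P(z^{d}) = Q(z^{d})\bigl(Q(z) - P(z)\bigr)$. Since $\gcd(P(z), Q(z) - P(z)) = \gcd(P, Q) = 1$ and $\gcd(P(z^{d}), Q(z^{d})) = 1$ (using $p \nmid d$), one obtains $P(z) \mid Q(z^{d})$ and $Q(z^{d}) \mid P(z)$, hence $P = \lambda Q(z^{d})$ for some $\lambda \in \overline{\mathbb{K}}^{\times}$. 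Substituting back produces $Q(z) = \lambda Q(z^{d}) + \lambda^{2} Q(z^{d^{2}})$; comparing leading terms forces $\deg Q = 0$, so $\gamma$ is a constant.

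Finally, with $\gamma = c$ and $1 + c = 1/c$, the second equation becomes $c\,\beta(z^{d}) + \beta(z) = c(z^{d^{2}} - z)$. The function $\beta = a(z^{d}) - c\,a(z)$ is analytic at $0$, so $\beta$ has no pole at $0$; moreover, any nonzero root $\rho$ of the denominator of $\beta$ would force every $d$-th root of $\rho$ (and in turn every $d^{k}$-th root) to also be a root, producing infinitely many roots, which is impossible. So $\beta$ is a polynomial, necessarily of degree $d$ by a leading-term count. Writing $\beta(z) = \sum_{j=0}^{d} b_{j} z^{j}$ and matching the coefficients of $z$, $z^{d}$, and $z^{d^{2}}$ in turn yields $b_{1} = -c$, then $b_{d} = -c b_{1} = c^{2}$, then $b_{d} = 1$. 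Hence $c^{2} = 1$; combined with $c^{2} + c - 1 = 0$ this gives $c = 0$, incompatible with $c^{2} + c - 1 = 0$. The main obstacle of the argument is the rational-function analysis forcing $\gamma$ constant and $\beta$ polynomial; once these structural constraints are obtained, the final contradiction reduces to a short explicit coefficient check.
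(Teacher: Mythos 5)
Your proof is correct, but it takes a genuinely different route from the paper's. The paper argues directly on power-series coefficients: writing a hypothetical relation with \emph{coprime} polynomials $P_{-1},P_{0},P_{1}$ and extracting the coefficients of $z^{d^{N}}$ and $z^{d^{N+1}}$ for $N$ large enough that $d^{N}-d^{N-1}$ exceeds all the degrees, it obtains a $2\times 2$ homogeneous system in the constant terms of $P_{0}$ and $P_{1}$ whose determinant is $F_{N}^{2}-F_{N-1}F_{N+1}=(-1)^{N+1}\neq 0$ (Cassini's identity); hence all three constant terms vanish, contradicting coprimality. That argument is short, self-contained, and needs neither the transcendence of $a(z)$ nor the Mahler equation. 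You instead convert a hypothetical dependence into a first-order inhomogeneous relation $a(z^{d})=\beta(z)+\gamma(z)a(z)$, play it against the second-order equation $a(z^{d^{2}})=z^{d^{2}}-z+a(z)-a(z^{d})$ coming from \eqref{syst_ex_1}, and use rational-function and degree arguments to force $\gamma$ constant and $\beta$ polynomial before reaching an explicit coefficient contradiction. This costs you Theorem \ref{l_mahler_bis} (to get transcendence of $a$, which you also need to separate the coefficient of $a(z)$ and to handle the degenerate cases $\alpha_{2}=0$ or $\alpha_{3}=0$) and noticeably more bookkeeping, but it is more conceptual: beyond the functional equation, the only property of the Fibonacci residues you use is that infinitely many of them are nonzero, whereas the paper's proof leans on the specific Cassini determinant. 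Note also that your proof, unlike the paper's, depends on the paper's assertion that $(1,a(z),a(z^{d}))$ solves \eqref{syst_ex_1}, i.e.\ that the recurrence $F_{m}=F_{m-1}+F_{m-2}$ survives in $\overline{\mathbb{K}}$; that is part of the example's setup rather than a gap in your reasoning, but it is a dependency the coefficient-comparison proof does not have.
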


Proof of Lemma \ref{l_ind_lin_ex} does not involve difficult arguments and illustrates the interest of Theorem \ref{phil_p}.

\begin{proof}[Proof of Lemma \ref{l_ind_lin_ex}]
	Let us assume by contradiction that there exist coprime polynomials \\ $P_{-1}(z), P_{0}(z), P_{1}(z)\in\overline{\mathbb{K}}[z]$ such that
	$$P_{-1}(z)+P_{0}(z)a(z)+P_{1}(z)a(z^{d})=0.$$
	It follows that
	\begin{equation}
	\label{fibo_deg_grand}
	P_{-1}(z)+P_{0}(z)z+P_{0}(z)\sum_{n=1}^{+\infty}F_{n}z^{d^{n}}+P_{1}(z)z^{d}+P_{1}(z)\sum_{n=2}^{+\infty}F_{n-1}z^{d^{n}}=0.
	\end{equation}
	For all $n\geq 1$, let us set $a_{n}=d^{n}-d^{n-1}$. Then, the sequence $(a_{n})_{n}$ is strictly increasing. Now, let us take $N\in\mathbb{N}$ such that $a_{n}>\max(\deg(P_{i}))$ for all $n\geq N$. If we compare the coefficients of $z^{d^{N}}$ and $z^{d^{N+1}}$ respectively between the left and right-hand side of \eqref{fibo_deg_grand}, we get
	\begin{equation}
	\label{fibo_det}
	\begin{cases}
	p_{0,0}F_{N}+p_{1,0}F_{N-1}&=0\\
	p_{0,0}F_{N+1}+p_{1,0}F_{N}&=0,
	\end{cases} 
	\end{equation}
	where $p_{0,0}, p_{1,0}$ are respectively the constant term of $P_{0}(z)$ and $P_{1}(z)$.
	By property of the Fibonacci sequence, the determinant $F_{N}^{2}-F_{N-1}F_{N+1}$ of System \eqref{fibo_det} is equal to $(-1)^{N+1}\neq 0$. Hence, $p_{0,0}=p_{1,0}=0$. But, by \eqref{fibo_deg_grand}, the constant term of $P_{-1}(z)$ is equal to zero. This contradicts the fact that the $P_{i}$'s are coprime. Lemma \ref{l_ind_lin_ex} is proved.
\end{proof}

\subsection{Regular extensions}
\label{sub_phil}

If $f_{1}(z), \ldots, f_{n}(z) \in \mathbb{K}\{z\}$ are algebraically independent functions over $\overline{\mathbb{K}}(z)$, then, the extension $\overline{\mathbb{K}}(z)(f_{1}(z), \ldots, f_{n}(z))$ is regular over $\overline{\mathbb{K}}(z)$. Indeed, let us set $\mathcal{E}=\overline{\mathbb{K}}(z)(f_{1}(z), \ldots, f_{n}(z))$. As $(f_{1}(z), \ldots, f_{n}(z))$ is a transcendence basis of $\mathcal{E}$ over $\overline{\mathbb{K}}(z)$, $\mathcal{E}$ is separable over $\overline{\mathbb{K}}(z)$. Moreover, let us assume that there exists an element $$a(z)\in\overline{\mathbb{K}}(z)(f_{1}(z),\ldots,f_{n}(z))\cap \overline{\mathbb{K}(z)}\setminus\overline{\mathbb{K}}(z).$$
Then, by \eqref{relfct}, the functions $f_{1}(z),\ldots,f_{n}(z)$ are algebraically dependent over $\overline{\mathbb{K}(z)}$, and hence, over $\overline{\mathbb{K}}(z)$, which is a contradiction.
\medskip

This shows in particular that, when $p \mid d$, there exist regular $d$-Mahler extensions. In fact, in this case, there even exist regular $d$-Mahler extensions associated with a solution of a $d$-Mahler system whose coordinates are algebraically dependent over $\overline{\mathbb{K}}(z)$. Indeed, let us consider the system

\begin{equation}
\label{syst_fg_ex}
\begin{pmatrix} f_{1}(z^{q}) \\ f_{2}(z^{q}) \\ f_{3}(z^{q}) \\  f_{4}(z^{q}) \end{pmatrix}=
\begin{pmatrix} 1 & 0 & 0 & 0 \\ 
\left(\frac{1}{z}\right)^{q}-T & -\left(\left(\frac{1}{z}\right)^{q}-T\right) & 0 & 0 \\
0 & 0 & \frac{1}{1-Tz^{q}} & 0\\
0 & 0 & \frac{1}{z^{q}} & -\frac{1}{z^{q}}
\end{pmatrix}\begin{pmatrix}
f_{1}(z) \\ f_{2}(z) \\ f_{3}(z) \\  f_{4}(z)
\end{pmatrix}.
\end{equation}

Now, let us set

$$f(z)=\sum_{n=0}^{+\infty} \frac{(-1)^{n}}{(\left(\frac{1}{z}\right)^{q}-T)(\left(\frac{1}{z}\right)^{q^{2}}-T)\cdots(\left(\frac{1}{z}\right)^{q^{n}}-T)},$$
and
$$g(z)=\prod_{n=1}^{+\infty} \left(1-Tz^{q^{n}} \right).$$

These functions are respectively $f_{1}(\frac{1}{z})$ and $g(z)$ introduced by L. Denis in \cite{Denis_log, D3}. According to L. Denis, $f(z)$ and $g(z)$ are analytic in $\{z\in C, |z|<\left(\frac{1}{q}\right)^{\frac{1}{q}}\}$ (even in the open unit disk for $g(z)$) and algebraically independent over $\overline{\mathbb{K}}(z)$. 

We see that a solution to \eqref{syst_fg_ex} is given by
$$f_{1}(z)=1, f_{2}(z)=f(z), f_{3}(z)=g(z), f_{4}(z)=f(z)g(z).$$

Then, the Mahler extension $\overline{\mathbb{K}}(z)(f_{1}(z),\ldots,f_{4}(z))$
is regular over $\overline{\mathbb{K}}(z)$. Indeed, $f(z), g(z)$ are algebraically independent over $\overline{\mathbb{K}}(z)$ and $\overline{\mathbb{K}}(z)(1,f(z),g(z),f(z)g(z))=\overline{\mathbb{K}}(z)(f(z),g(z))$.

\subsection{Non-regular extensions}
\label{non_reg_ext}

We have seen in the introduction of this paper that the $p$-Mahler extension $\mathcal{E}=\overline{\mathbb{K}}(z)(1,\sum_{n=0}^{+\infty}z^{p^{n}})$ is not regular over $\overline{\mathbb{K}}(z)$. In this case, $\mathcal{E}$ was an algebraic extension of $\overline{\mathbb{K}}(z)$. But there also exist non-regular transcendental $q$-Mahler extensions. Moreover, such an extension can be found among the simplest possible Mahler extensions, that is those of the form

$$\mathcal{E}=\overline{\mathbb{K}}(z)(f(z))_{\sigma_{q}},$$
where $f(z)$ is a transcendental $q$-Mahler function. 

Let us set 
$$f(z)=\sum_{n=0}^{+\infty} \frac{z^{q^{n}}}{1-Tz^{q^{n}}}-z-\sum_{n=1}^{+\infty}F_{n}z^{q^{n}},$$

where $F_{n}$ is the residue modulo $q$ of the $n$-th Fibonacci number (with $F_{1}=1, F_{2}=1$). By \cite{D3}, $f(z)$ is a transcendental analytic function in $\{z\in C, |z|<\frac{1}{q}\}$. Moreover, we have

\begin{equation}
\label{eq_inh_min_ex_3}
f\left(z^{q^{3}}\right)-2f\left(z^{q}\right)+f(z)-R(z)=0,
\end{equation}
where $R(z)=z-z^{q}-z^{q^{2}}+z^{q^{3}}-\frac{z}{1-Tz}+\frac{z^{q}}{1-Tz^{q}}+\frac{z^{q^{2}}}{1-Tz^{q^{2}}}$.

We prove the following proposition.

\begin{prop}
	\label{prop_ex_non_reg}
The $q$-Mahler extension $\mathcal{E}=\overline{\mathbb{K}}(z)(f(z))_{\sigma_{q}}$	is non-regular over $\overline{\mathbb{K}}(z)$.
\end{prop}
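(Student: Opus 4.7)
The plan is to exhibit an element of $\mathcal{E}$ that is algebraic over $\overline{\mathbb{K}}(z)$ but not in $\overline{\mathbb{K}}(z)$, thereby violating condition (2) in the definition of regularity. Decompose
$$f(z) = g_1(z) - a(z), \quad \text{where}\quad g_1(z) = \sum_{n=0}^{+\infty} \frac{z^{q^n}}{1 - Tz^{q^n}}, \quad a(z) = z + \sum_{n=1}^{+\infty} F_n z^{q^n}.$$
The two pieces have very different Mahler behaviors, and it is $a(z)$ that will carry the algebraic, non-rational content.

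First, I would derive elementary Mahler identities. Directly from the definition, $g_1(z) - g_1(z^q) = \tfrac{z}{1-Tz}$. For $a(z)$, a term-by-term coefficient comparison using the Fibonacci recurrence $F_n = F_{n-1} + F_{n-2}$ (valid in $\mathbb{F}_p$) yields
$$a(z^{q^2}) + a(z^q) - a(z) = z^{q^2} - z.$$
The crucial observation is that all coefficients of $a(z)$ lie in $\mathbb{F}_p$, so the Frobenius identity $a(z^{q^k}) = a(z)^{q^k}$ holds for every $k \geq 0$. Substituting converts the Mahler equation into a genuine polynomial identity
$$a(z)^{q^2} + a(z)^q - a(z) - (z^{q^2} - z) = 0,$$
which shows that $a(z)$, and therefore $a(z^{q^2})$, is algebraic over $\overline{\mathbb{K}}(z)$.

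Next, I would verify that $a(z^{q^2}) \notin \overline{\mathbb{K}}(z)$. Its power series support lies in $\{q^n : n \geq 2\}$, with infinitely many non-zero coefficients (the Pisano period of Fibonacci modulo $p$ guarantees that $F_n \not\equiv 0 \pmod p$ for infinitely many $n$). Since the gaps between consecutive non-zero positions grow without bound, the coefficient sequence cannot satisfy any linear recurrence; hence $a(z^{q^2})$ is not a rational function. Finally, I would show $a(z^{q^2}) \in \mathcal{E}$. Using $f = g_1 - a$ and the identity for $g_1$,
$$f(z) - f(z^q) = \frac{z}{1-Tz} - \bigl(a(z) - a(z^q)\bigr),$$
so $a(z) - a(z^q) \in \mathcal{E}$. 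Combining this with the relation $a(z^{q^2}) = a(z) - a(z^q) + z^{q^2} - z$ produces $a(z^{q^2}) \in \mathcal{E}$. Then $\mathcal{E}$ contains an algebraic, non-rational element, hence is not regular over $\overline{\mathbb{K}}(z)$.

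The main obstacle is recognizing that $a(z)$ is algebraic: this depends on the interplay between the Fibonacci recurrence (giving the linear Mahler relation of order $2$) and the Frobenius identity $a(z^q) = a(z)^q$ (coming from $F_n \in \mathbb{F}_p$), which together collapse the Mahler relation to a polynomial equation — a phenomenon specific to the case $p \mid q$. The remaining work — deriving the two Mahler relations, ruling out rationality via coefficient-gap estimates, and recovering $a(z^{q^2})$ inside $\mathcal{E}$ — is largely mechanical.
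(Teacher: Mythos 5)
Your proof is correct and follows essentially the same route as the paper's: the same decomposition $f=g-a$, the same Mahler identities for $g$ and $a$, the Frobenius trick (coefficients in the prime field) turning the order-two Mahler relation for $a$ into the polynomial equation $a^{q^2}+a^q-a=z^{q^2}-z$, the sparse-support argument for non-rationality, and the recovery of the algebraic non-rational element inside $\mathcal{E}$ via $f(z)-f(z^q)$. The only cosmetic difference is that you rule out $a(z^{q^2})\in\overline{\mathbb{K}}(z)$ directly from its support, where the paper argues for $a(z)$ and transfers the conclusion with Cartier operators; both are fine.
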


\begin{proof}[Proof of Proposition \ref{prop_ex_non_reg}]
	
Let us set $$g(z)=\sum_{n=0}^{+\infty} \frac{z^{q^{n}}}{1-Tz^{q^{n}}},$$  and $$a(z)=z+\sum_{n=1}^{+\infty}F_{n}z^{q^{n}}.$$ First, let us notice that $g(z^{q})=g(z)-\frac{z}{1-Tz}$, $a(z^{q^{2}})=-a(z^{q})+a(z)+z^{q^{2}}-z$, and $a(z)^{q^{2}}=-a(z)^{q}+a(z)+z^{q^{2}}-z$. On the other hand, as the sequence
$$a_{1}=1,\text{ } a_{k}=\begin{cases}F_{n}, &\text{ if } k=q^{n}\\
 0 &\text{ otherwise}\end{cases}, \text{ for } k\geq 2$$
is not ultimately periodic, we obtain that
\begin{equation*}
a(z)\in\overline{\mathbb{K}(z)}\setminus\overline{\mathbb{K}}(z).
\end{equation*}

Now, we compute

\begin{align}
f(z)-f(z^{q})&=a(z)-g(z)-a(z^{q})+g(z^{q}) \nonumber\\
&=a(z)-a(z^{q})-g(z)+g(z)-\frac{z}{1-Tz}, \nonumber\\
&=a(z)-a(z^{q})-\frac{z}{1-Tz}\nonumber\\
\label{rel_lin_g_alpha}
&=a(z^{q^{2}})-z^{q^{2}}+z-\frac{z}{1-Tz}.
\end{align}

But $a(z^{q^{2}})\in\overline{\mathbb{K}(z)}\setminus\overline{\mathbb{K}}(z)$ (if not, apply suitable Cartier operators and get $a(z)\in\overline{\mathbb{K}}(z)$ which is a contradiction). This implies that $f(z)-f\left(z^{q}\right)\in\mathcal{E}\cap\overline{\mathbb{K}(z)}\setminus\overline{\mathbb{K}}(z)$  and proves that $\mathcal{E}$ is not regular over $\overline{\mathbb{K}}(z)$. Proposition \ref{prop_ex_non_reg} is proved.
\end{proof}

Besides, in this case, a direct and elementary approach shows that the conclusion of Theorem \ref{phil_p} is not satisfied. First, let us prove the following lemma.

\begin{lem}
	\label{g_lin_ind}
	The functions $1,f(z),f\left(z^{q}\right),f\left(z^{q^{2}}\right)$ are linearly independent over $\overline{\mathbb{K}}(z)$. In other words, inhomogeneous Equation \eqref{eq_inh_min_ex_3} is minimal.	
\end{lem}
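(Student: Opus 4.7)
The plan is to assume for contradiction a relation
\[P_{-1}(z) + P_0(z)\,f(z) + P_1(z)\,f(z^q) + P_2(z)\,f(z^{q^2}) = 0\]
with $P_i\in\overline{\mathbb{K}}[z]$ not all zero, and to exploit the decomposition $f(z) = g(z) - a(z)$ used in the proof of Proposition \ref{prop_ex_non_reg}, where $g(z) = \sum_{n\ge0}\frac{z^{q^n}}{1-Tz^{q^n}}$ and $a(z) = z + \sum_{n\ge1}F_n z^{q^n}$. The crucial asymmetry is that $a(z)$ is algebraic over $\overline{\mathbb{K}}(z)$ (from the identity $a(z)^{q^2}+a(z)^q - a(z) = z^{q^2}-z$), whereas $g(z) = f(z) + a(z)$ is transcendental over $\overline{\mathbb{K}}(z)$ since $f(z)$ is.

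First I would rewrite each $f(z^{q^k})$ as $g(z) + r_k(z) - a(z^{q^k})$, using the iterated identity $g(z^{q^k}) = g(z) - \sum_{j=0}^{k-1}\frac{z^{q^j}}{1-Tz^{q^j}}$, and then eliminate $a(z^{q^2})$ via $a(z^{q^2}) = a(z) - a(z^q) + z^{q^2} - z$. A direct computation turns the hypothetical relation into
\[(P_0+P_1+P_2)\,g(z)\;-\;(P_0+P_2)\,a(z)\;-\;(P_1-P_2)\,a(z^q)\;+\;S(z)\;=\;0,\]
with $S(z)\in\overline{\mathbb{K}}(z)$. Since $a(z)$ and $a(z^q)$ lie in $\overline{\mathbb{K}(z)}$ while $g(z)$ does not, the function $g(z)$ is transcendental over the finite algebraic extension $\overline{\mathbb{K}}(z)(a(z),a(z^q))$ of $\overline{\mathbb{K}}(z)$, which forces the $g$-coefficient to vanish: $P_0+P_1+P_2=0$.

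What remains is a relation $(P_0+P_2)\,a(z) + (P_1-P_2)\,a(z^q) = -S(z) \in \overline{\mathbb{K}}(z)$. After clearing denominators this becomes a polynomial identity $\tilde{P}_{-1}(z) + \tilde{P}_0(z)\,a(z) + \tilde{P}_1(z)\,a(z^q) = 0$ of exactly the type handled in Lemma \ref{l_ind_lin_ex}. I would reproduce the Cassini-determinant argument of that lemma verbatim: assuming the $\tilde{P}_i$ coprime, compare the coefficients of $z^{q^N}$ and $z^{q^{N+1}}$ for $N$ larger than $\max_i\deg(\tilde{P}_i)$ to obtain a $2\times 2$ linear system in the constant terms $\tilde{P}_0(0),\tilde{P}_1(0)$ whose determinant is the Cassini quantity $F_N^2 - F_{N-1}F_{N+1} = (-1)^{N+1}$, nonzero in $\overline{\mathbb{K}}$ regardless of the characteristic. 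This forces each $\tilde{P}_i(0)=0$, contradicting coprimality unless every $\tilde{P}_i$ vanishes. Consequently $P_0+P_2 = 0$ and $P_1-P_2 = 0$; combining these with $P_0+P_1+P_2 = 0$ yields $P_0 = P_1 = P_2 = 0$, and then $P_{-1} = 0$ is immediate.

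The hard part will be the initial substitution step: the bookkeeping of the rational remainders $r_k(z)$ and of the $(z^{q^2}-z)$ correction must be performed carefully so that the coefficients multiplying $g(z)$, $a(z)$, and $a(z^q)$ are correctly isolated, and one must ensure at the decisive moment that $g(z)\notin\overline{\mathbb{K}(z)}$, which we get essentially for free from the cited transcendence of $f(z)$ in \cite{D3}. A subtlety worth flagging is that Lemma \ref{l_ind_lin_ex} is stated under the hypothesis $p\nmid d$; however, its proof relies only on Cassini's identity producing $\pm 1$, which is nonzero in any field, so the argument transfers verbatim to $d=q$ in our characteristic-$p$ setting.
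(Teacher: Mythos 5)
Your proposal is correct and follows essentially the same route as the paper: decompose $f=g-a$, use the transcendence of $g(z)$ to force $P_{0}+P_{1}+P_{2}=0$, and then apply the Cassini-determinant argument of Lemma \ref{l_ind_lin_ex} to $1,a(z),a(z^{q})$ to kill the remaining coefficients. The only cosmetic difference is that you deduce the transcendence of $g(z)$ from that of $f(z)$ rather than citing \cite{D3} for $g(z)$ directly, and you correctly flag (as the paper implicitly does) that the Cassini argument transfers to $d=q$ despite the $p\nmid d$ hypothesis in Lemma \ref{l_ind_lin_ex}.
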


\begin{proof}
	Let us assume by contradiction that there exist polynomials $P_{-1}(z),\ldots,P_{2}(z)\in\overline{\mathbb{K}}[z]$ not all zero such that
$$P_{-1}(z)+P_{0}(z)f(z)+P_{1}(z)f\left(z^{q}\right)+P_{2}(z)f\left(z^{q^{2}}\right)=0$$
Then, after computations, we get
\begin{multline}
\label{eg_f_transc}
g(z)(P_{0}(z)+P_{1}(z)+P_{2}(z))=P_{-1}(z)+P_{1}(z)P(z)+P_{2}(z)Q(z)+P_{2}(z)P\left(z^{q}\right)+P_{2}(z)P(z)\\+a(z)(P_{0}(z)+P_{2}(z))+a\left(z^{q}\right)(P_{1}(z)-P_{2}(z)),
\end{multline}
where $P(z)=\frac{z}{1-Tz}$ and $Q(z)=z^{q^{2}}-z$. But $g(z)$ is transcendental over $\overline{\mathbb{K}}(z)$ \cite{D3}, whereas the right-hand side of \eqref{eg_f_transc} is algebraic over $\overline{\mathbb{K}}(z)$. Hence
 
\begin{equation}
\label{P_i}
P_{0}(z)+P_{1}(z)+P_{2}(z)=0,
\end{equation} 

and

\begin{multline}
\label{a_lin_ind}
P_{-1}(z)+P_{1}(z)P(z)+P_{2}(z)Q(z)+P_{2}(z)P\left(z^{q}\right)+P_{2}(z)P(z)+a(z)(P_{0}(z)+P_{2}(z))\\+a\left(z^{q}\right)(P_{1}(z)-P_{2}(z))=0.
\end{multline}

Now, let us notice that the function $a(z)$ seems similar to the one defined by \eqref{definition_a}. But in \eqref{definition_a}, $a(z)$ is $d$-Mahler with $p\nmid d$ and is transcendental over $\overline{\mathbb{K}}(z)$, whereas here, $a(z)$ is $q$-Mahler, with $q=p^{r}$ and algebraic over $\overline{\mathbb{K}}(z)$. Nevertheless, arguing as in Lemma \ref{l_ind_lin_ex}, we get that $1,a(z),a(z^{q})$ are also linearly independent over $\overline{\mathbb{K}}(z)$. Hence, by \eqref{a_lin_ind}
$$P_{1}(z)=P_{2}(z)=-P_{0}(z).$$
By \eqref{P_i}, we get $P_{0}(z)=P_{1}(z)=P_{2}(z)=0$. Finally, by \eqref{a_lin_ind}, $P_{-1}(z)=0$, which is a contradiction. Lemma \ref{g_lin_ind} is proved.
\medskip
\end{proof}

Finally, let $\alpha\in\overline{\mathbb{K}}$, $0<|\alpha|<1$ be a regular number for the system associated with \eqref{eq_inh_min_ex_3}, that is, $\alpha\notin\left\{\left(\frac{1}{T}\right)^{1/q^{k}}\right\}_{k\geq 0}$. By \eqref{rel_lin_g_alpha}, we see that $f(\alpha)-f(\alpha^{q})\in\overline{\mathbb{K}}$, that is $1,f(\alpha),f(\alpha^{q})$ are linearly dependent over $\overline{\mathbb{K}}$. Hence, it follows from Lemma \ref{g_lin_ind} that the conclusion of Theorem \ref{phil_p} is not satisfied.
\bigskip

\textbf{Aknowledgements}. The author would like to thank Patrice Philippon for all the interesting discussions they had, which go beyond the framework of this paper, his availability and his kindness; Julien Roques for his review of a preliminary version of the proof of Proposition \ref{l_prop}; and, obviously, Boris Adamczewski for his multiple reviews and corrections of this paper and his useful advice. Any critical remark must be exclusively addressed to the author of this paper.

\bibliographystyle{smfplain}
\bibliography{Theoreme_patrice_philippon_car_p}

\providecommand{\bysame}{\leavevmode ---\ }
\providecommand{\og}{``}
\providecommand{\fg}{''}
\providecommand{\smfandname}{and}
\providecommand{\smfedsname}{\'eds.}
\providecommand{\smfedname}{\'ed.}
\providecommand{\smfmastersthesisname}{M\'emoire}
\providecommand{\smfphdthesisname}{Th\`ese}
\begin{thebibliography}{10}

\bibitem{A-F_effectif}
{\scshape B.~Adamczewski {\normalfont \smfandname} C.~Faverjon} -- {\og
  Méthode de mahler, transcendance et relations linéaires : aspects
  effectifs\fg}, \emph{to appear in J. Théor. Nombres Bordeaux}.

\bibitem{A-F}
\bysame , {\og Méthode de mahler : relations linéaires, transcendance et
  applications aux nombres automatiques\fg}, \emph{Proc. London Math. Soc.}
  \textbf{115} (2017), 55--90.

\bibitem{A-F_var}
\bysame , {\og Mahler's method in several variables {I}. {T}he theory of
  regular singular systems\fg}, \emph{preprint} (2018), 65 pp.

\bibitem{Boris-Tanguy}
{\scshape B.~Adamczewski {\normalfont \smfandname} T.~Rivoal} -- {\og
  {Exceptional values of E-functions at algebraic points}\fg}, \emph{to appear
  in Bull. London Math. Soc.}, 12 pp.

\bibitem{andre}
{\scshape Y.~Andr\'e} -- {\og Solution algebras of differential equations and
  quasi-homogeneous varieties: a new differential {G}alois correspondence\fg},
  \emph{Ann. Sci. \'Ec. Norm. Sup\'er. (4)} \textbf{47} (2014), 449--467.

\bibitem{Beukers}
{\scshape F.~Beukers} -- {\og A refined version of the {S}iegel-{S}hidlovskii
  theorem\fg}, \emph{Ann. of Math. (2)} \textbf{163} (2006), 369--379.

\bibitem{D3}
{\scshape L.~Denis} -- {\og Ind\'ependance alg\'ebrique des d\'eriv\'ees d'une
  p\'eriode du module de {C}arlitz\fg}, \emph{J. Austral. Math. Soc. Ser. A}
  \textbf{69} (2000), 8--18.

\bibitem{Denis_log}
\bysame , {\og Ind\'ependance alg\'ebrique de logarithmes en caract\'eristique
  {$p$}\fg}, \emph{Bull. Austral. Math. Soc.} \textbf{74} (2006), 461--470.

\bibitem{Eisenbud}
{\scshape D.~Eisenbud} -- \emph{Commutative algebra. with a view toward
  algebraic geometry}, Graduate Texts in Mathematics \textbf{150},
  Springer-Verlag, New York, 1995.

\bibitem{Fernandes}
{\scshape G.~Fernandes} -- {\og Méthode de mahler en caractéristique non
  nulle : un analogue du théorème de {Ku}. {Nishioka}\fg}, \emph{to appear in
  Annales de l'Institut Fourier de Grenoble}, 22 pp.

\bibitem{Rivoal-Fischler}
{\scshape S.~Fischler {\normalfont \smfandname} T.~Rivoal} -- {\og Arithmetic
  theory of {$E$}-operators\fg}, \emph{J. \'Ec. polytech. Math.} \textbf{3}
  (2016), 31--65.

\bibitem{Graham}
{\scshape A.~Graham} -- \emph{Kronecker products and matrix calculus: with
  applications}, Ellis Horwood Ltd., Chichester; Halsted Press [John Wiley \&\
  Sons, Inc.], New York, 1981.

\bibitem{Hart}
{\scshape R.~Hartshorne} -- \emph{Algebraic geometry}, Graduate Texts in
  Mathematics \textbf{52}, Springer-Verlag, New York-Heidelberg, 1977.

\bibitem{KrullI}
{\scshape W.~Krull} -- {\og Parameterspezialisierung in {P}olynomringen\fg},
  \emph{Arch. Math.} \textbf{1} (1948), 56--64.

\bibitem{KrullII}
\bysame , {\og Parameterspezialisierung in {P}olynomringen. {II}. {D}as
  {G}rundpolynom\fg}, \emph{Arch. Math. (Basel)} \textbf{1} (1948), 129--137.

\bibitem{Lang}
{\scshape S.~Lang} -- \emph{Algebra}, Graduate Texts in Mathematics
  \textbf{211}, Springer-Verlag, New York, 2002.

\bibitem{Maclane}
{\scshape S.~Mac~Lane} -- {\og Modular fields. {I}. {S}eparating transcendence
  bases\fg}, \emph{Duke Math. J.} \textbf{5} (1939), 372--393.

\bibitem{Nest_shid}
{\scshape Y.~V. Nesterenko {\normalfont \smfandname} A.~B. Shidlovski\u\i} --
  {\og On the linear independence of values of {$E$}-functions\fg}, \emph{Mat.
  Sb.} \textbf{187} (1996), 93--108.

\bibitem{N-art}
{\scshape K.~Nishioka} -- {\og New approach in {M}ahler's method\fg}, \emph{J.
  Reine Angew. Math.} \textbf{407} (1990), 202--219.

\bibitem{N}
\bysame , \emph{Mahler functions and transcendence}, Lecture Notes in
  Mathematics \textbf{1631}, Springer-Verlag, Berlin, 1996.

\bibitem{Phil1}
{\scshape P.~Philippon} -- {\og Groupes de {G}alois et nombres
  automatiques\fg}, \emph{J. Lond. Math. Soc. (2)} \textbf{92} (2015),
  596--614.

\bibitem{JR}
{\scshape J.~Roques} -- {\og On the reduction modulo {$p$} of {M}ahler
  equations\fg}, \emph{Tohoku Math. J. (2)} \textbf{69} (2017), 55--65.

\bibitem{Schmid}
{\scshape H.~L. Schmid} -- {\og \"{U}ber die {A}utomorphismen eines
  algebraischen {F}unktionenk\"orpers von {P}rimzahlcharakteristik\fg},
  \emph{J. Reine Angew. Math.} \textbf{179} (1938), 5--15.

\bibitem{Serre}
{\scshape J.-P. Serre} -- \emph{Corps locaux}, Publications de l'Universit\'e
  de Nancago, Hermann, Paris, 1968.

\bibitem{S-S}
{\scshape A.~B. Shidlovskii} -- \emph{Transcendental numbers}, De Gruyter
  Studies in Mathematics \textbf{12}, Walter de Gruyter \& Co., Berlin, 1989.

\bibitem{Silverman}
{\scshape J.~H. Silverman} -- \emph{The arithmetic of elliptic curves},
  Graduate Texts in Mathematics \textbf{106}, Springer, Dordrecht, 2009.

\bibitem{Singh}
{\scshape B.~Singh} -- {\og On the group of automorphisms of function field of
  genus at least two\fg}, \emph{J. Pure Appl. Algebra} \textbf{4} (1974),
  205--229.

\bibitem{Sam_Zar_2}
{\scshape O.~Zariski {\normalfont \smfandname} P.~Samuel} -- \emph{Commutative
  algebra}, The University Series in Higher Mathematics \textbf{II}, D. Van
  Nostrand Co., Inc., Princeton, N. J.-Toronto-London-New York, 1960.

\end{thebibliography}
\printindex

\nocite{Nest_shid}
\nocite{KrullI}

\end{document}